\newtheorem{theorem}{Theorem}[section]
\newtheorem{lemma}[theorem]{Lemma}
\newtheorem{defi}[theorem]{Definition}
\newtheorem{cor}[theorem]{Corollary}
\DeclareMathOperator{\im}{im}
\DeclareMathOperator{\sfl}{sf}
\DeclareMathOperator{\diag}{diag}
\DeclareMathOperator{\sgn}{sgn}
\DeclareMathOperator{\Sp}{Sp}
\DeclareMathOperator{\re}{Re}
\title{Spectral flow, crossing forms and homoclinics of Hamiltonian systems}
\author{Nils Waterstraat}
\begin{document}
\date{}
\maketitle

\footnotetext[1]{{\bf 2010 Mathematics Subject Classification: Primary 58J30; Secondary 37J45, 58E07}}
\footnotetext[2]{N. Waterstraat was supported by the Berlin Mathematical School and the SFB 647 ``Space--Time--Matter''.}

\begin{abstract}
We prove a spectral flow formula for one-parameter families of Hamiltonian systems under homoclinic boundary conditions, which relates the spectral flow to the relative Maslov index of a pair of curves of Lagrangians induced by the stable and unstable subspaces, respectively. Finally, we deduce sufficient conditions for bifurcation of homoclinic trajectories of one-parameter families of nonautonomous Hamiltonian vector fields.    

\vskip1truecm
\centerline{Dedicated to Jacobo Pejsachowicz\quad * 11.10.1944} 
\end{abstract}

\section{Introduction}
We denote by $I:=[0,1]$ the unit interval and we consider for $\lambda\in I$ homoclinic solutions of Hamiltonian systems
 
\begin{equation}\label{Hamiltonian}
\left\{
\begin{aligned}
Ju'(t)+S_\lambda(t)u(t)&=0,\quad t\in\mathbb{R}\\
\lim_{t\rightarrow\pm\infty}u(t)&=0,
\end{aligned}
\right.
\end{equation}
where $S:I\times\mathbb{R}\rightarrow\mathcal{S}(\mathbb{R}^{2n})$ is a smooth family of symmetric matrices having uniform limits $S_\lambda(\pm\infty):=\lim_{t\rightarrow\pm\infty}S_\lambda(t)$, and 

\begin{align}\label{J}
J=\begin{pmatrix}
0&-I_n\\
I_n&0
\end{pmatrix}
\end{align}
is the standard symplectic matrix. Let us recall that the stable and unstable subspaces of \eqref{Hamiltonian} at $t_0\in\mathbb{R}$ are given by

\begin{align}\label{stableunstablesubspaces}
\begin{split}
E^s_\lambda(t_0)&=\{u(t_0):\, Ju'(t)+S_\lambda(t) u(t)=0,\, t\in\mathbb{R};\,u(t)\rightarrow 0,t\rightarrow\infty \}\\
E^u_\lambda(t_0)&=\{u(t_0):\, Ju'(t)+S_\lambda(t) u(t)=0,\, t\in\mathbb{R};\,u(t)\rightarrow 0,t\rightarrow-\infty  \}.
\end{split}
\end{align} 
Clearly, there is a non-trivial solution of \eqref{Hamiltonian} if and only if $E^s_\lambda(t_0)\cap E^u_\lambda(t_0)\neq\{0\}$ for some (and hence any) $t_0\in\mathbb{R}$.\\
The spectral flow is an integer-valued homotopy invariant for paths of selfadjoint Fredholm operators that was introduced by Atiyah, Patodi and Singer in \cite{AtiyahPatodi} in connection with spectral asymmetry and the $\eta$-invariant. Roughly speaking, it is the net number of eigenvalues (counted with multiplicities) which pass through zero in the positive direction when the parameter of the path travels along the unit interval. There are several different but equivalent definitions of the spectral flow with various degrees of generality that have appeared in the literature during the last decades. Here we just want to mention \cite{BoossDesuspension}, \cite{Floer}, \cite{Robbin-Salamon}, \cite{Specflow}, \cite{UnbSpecFlow}, \cite{Wahl}, which is probably far away from being exhaustive. In what follows, we use the definition of \cite{UnbSpecFlow} which applies to any gap-continuous path of (generally) unbounded selfadjoint Fredholm operators on a separable Hilbert space. The differential equations \eqref{Hamiltonian} induce such operators $\mathcal{A}_\lambda$, $\lambda\in I$, on $L^2(\mathbb{R},\mathbb{R}^{2n})$ having as domain $H^1(\mathbb{R},\mathbb{R}^n)$, and such that the kernel of $\mathcal{A}_\lambda$ is given by the solutions of the corresponding equation \eqref{Hamiltonian}. We explain below that the spectral flow of the resulting path $\mathcal{A}$ is defined, and heuristically, it counts in an oriented way the instants $\lambda\in I$ for which the equations \eqref{Hamiltonian} have non-trivial solutions.\\  
Let us recall that $\mathbb{R}^{2n}$ is a symplectic space with respect to the symplectic form $\omega$ induced by \eqref{J}, i.e. $\omega(u,v)=\langle Ju,v\rangle_{\mathbb{R}^{2n}}$, $u,v\in \mathbb{R}^{2n}$. It is readily seen that $\omega(v(t_0),w(t_0))$ vanishes for all $t_0\in\mathbb{R}$ if $v$ and $w$ solve the differential equation $Ju'(t)+S_\lambda(t) u(t)=0$, $t\in\mathbb{R}$, and decay to zero at least in one of the limits $t\rightarrow\pm\infty$. Consequently, $E^s_\lambda(t_0)$ and $E^u_\lambda(t_0)$ are isotropic subspaces of $\mathbb{R}^{2n}$ and since they are of dimension $n$ under common assumptions that we introduce below, it follows that they actually are Lagrangian. The Maslov index assigns to any pair of paths of Lagrangian subspaces of a symplectic vector space an integer, which heuristically, counts non-trivial intersections between the spaces. There are several different constructions of the Maslov index in the literature, and here we just refer to \cite{Arnold}, \cite{Cappel}, and in particular to \cite{Robbin-SalamonMAS}, which we use below for defining the relative Maslov index $\mu_{Mas}(E^u_\cdot(t_0),E^s_\cdot(t_0))$ of the curves of Lagrangian subspaces induced by $E^s_\lambda(t_0)$ and $E^u_\lambda(t_0)$, $\lambda\in I$, for any fixed $t_0\in\mathbb{R}$.\\
Our main theorem shows that the spectral flow of the path $\mathcal{A}$ of unbounded selfadjoint Fredholm operators on $L^2(\mathbb{R},\mathbb{R}^{2n})$ induced by the Hamiltonian systems \eqref{Hamiltonian} coincides with the Maslov index $\mu_{Mas}(E^u_\cdot(t_0),E^s_\cdot(t_0))$ of the evolution of the unstable and stable subspaces \eqref{stableunstablesubspaces}.\\
Let us point out that a related result was proven before under different assumptions by Chen and Hu in \cite{Hu}. They suppose that the limits $S_\lambda(\pm\infty)$ are a single constant matrix, which allows to avoid some assumptions on the Hamiltonian systems \eqref{Hamiltonian} that we will require below. Here, however, we follow Pejsachowicz' setting from \cite{Jacobo} (cf. also \cite{Jacobohomoclinics}), who proved our main theorem under the additional assumption that $S_0=S_1:\mathbb{R}\rightarrow\mathcal{S}(\mathbb{R}^{2n})$, i.e. when the parameter space is the unit circle $S^1$ instead of the unit interval $I$. In this case, the spectral flow of the corresponding closed path $\mathcal{A}$ is equal to the relative Maslov index $\mu_{Mas}(E^s_\cdot(+\infty),E^u_\cdot(-\infty))$, where $E^s_\lambda(\pm\infty)$ and $E^u_\lambda(\pm\infty)$, $\lambda\in I$, denote the stable and unstable subspaces of the equations $Ju'(t)+S_\lambda(\pm\infty)u(t)=0$, $t\in\mathbb{R}$ (cf. \eqref{stabinf}, \eqref{unstabinf}). Of course, since these equations are autonomous, $E^s_\lambda(\pm\infty)$ and $E^u_\lambda(\pm\infty)$ can be computed easily from $S_\lambda(\pm\infty)$.  
Pejsachowicz' argument is inspired by the Atiyah-Singer index theorem and makes essentially use of the fact that the unit circle is topologically non-trivial. He introduces a symbol for Hamiltonian systems depending on the lower order terms and uses homotopy theory to conclude the equality of the spectral flow and the Maslov index. Consequently, these methods cannot be transfered to the setting that we are considering here, and in addition, our main theorem shows that the spectral flow of $\mathcal{A}$ will in general depend on the stable and unstable subspaces of the original non-autonomous equations $Ju'(t)+S_\lambda(t)u(t)=0$, $t\in\mathbb{R}$.\\
The proof of our main theorem is purely analytical and has some difficulties in its own. First, we use crossing forms for the computation of the spectral flow for paths $\mathcal{A}=\{\mathcal{A}_\lambda\}$ of selfadjoint Fredholm operators on a Hilbert space $H$ which have a constant dense domain $\mathcal{D}(\mathcal{A}_\lambda)=W\subset H$. We assume that $W$ is a Hilbert space in its own right and that the canonical inclusion $\iota:W\rightarrow H$ is continuous. The concept of crossing forms for the computation of the spectral flow was introduced by Robbin and Salamon in \cite{Robbin-Salamon} under the additional assumption that $\iota$ is compact, so that in particular the spectra of the operators $\mathcal{A}_\lambda$ are discrete. Later on, Fitzpatrick, Pejsachowicz and Recht showed in \cite{Specflow} that the spectral flow can also be computed by crossing forms for paths of bounded selfadjoint Fredholm operators, i.e. if $W=H$. However, both settings do not apply to our situation, since $W=H^1(\mathbb{R},\mathbb{R}^{2n})$ does neither coincide with $H=L^2(\mathbb{R},\mathbb{R}^{2n})$ nor is the embedding compact, and our first purpose is to establish crossing forms in the case of general spaces $W$ and $H$ as above. Second, from the theory of crossing forms, we will see that we can assume that our path $\mathcal{A}_\lambda$ is invertible except at one instant $\lambda_0\in I$ and that the spectral flow can be computed by means of a quadratic form on the kernel of $\mathcal{A}_{\lambda_0}$. This allows to reduce the computation of the spectral flow to operators that are defined on some $L^2$ space on a compact interval, which however, has to be done carefully in view of subsequent steps of our proof. Third, an originally unexpected term occurs in the middle of our proof that we need to treat by using perturbation methods \cite{Kato}. Finally, the equality of the spectral flow and the Maslov index can be traced back to the tools that were already introduced by Robbin and Salamon in \cite{Robbin-Salamon}.\\
Let us say a few words on former work on spectral flow formulas involving the Maslov index, where however, we do not claim to be exhaustive. The first theorem identifying the spectral flow of a differential operator with a Maslov index of which we are aware was proven by Salamon and Zehnder in \cite{Salamon-Zehnder} for periodic Hamiltonian systems. Here the curves of Lagrangian subspaces for the Maslov index are induced by the monodromy matrices (cf. \cite[Rem. 5.4]{Robbin-SalamonMAS}). A similar result was later shown by Fitzpatrick, Pejsachowicz and Recht in \cite{SFLPejsachowiczII} in the study of bifurcation theory, and recently the author generalised these theorems in \cite{Hamiltonian} to families of periodic Hamiltonian system by considering index bundles for families of selfadjoint Fredholm operators. Finally, let us mention in passing that spectral flow formulas for Hamiltonian systems on a compact interval under non-periodic boundary conditions can be found, for example, in \cite{Robbin-Salamon} and \cite{Cappel}. More general, the spectral flow for boundary value problems of Dirac operators and its relation to the Maslov index in symplectic Hilbert spaces has been studied extensively. Here we only mention \cite{NicolaescuDuke}, \cite{NicolaescuMem}, \cite{KirkLesch} and \cite{BBBZhu}.\\  
Pejsachowicz applies his spectral flow formula in \cite{Jacobo} to bifurcation of homoclinic solutions of families of nonlinear Hamiltonian systems parametrised by the circle, where he uses a bifurcation theorem for critical points of strongly indefinite families of functionals from his joint work \cite{Specflow} with Fitzpatrick and Recht. In our final Section \ref{sect:bifurcation}, we deduce from our spectral flow formula and the recent work \cite{JacBifIch} of Pejsachowicz and the author the following assertion: Let $\mathcal{H}:I\times\mathbb{R}\times\mathbb{R}^{2n}\rightarrow\mathbb{R}$ be a continuous map such that $\mathcal{H}_\lambda:=\mathcal{H}(\lambda,\cdot,\cdot):\mathbb{R}\times\mathbb{R}^{2n}\rightarrow\mathbb{R}$ is $C^2$ for all $\lambda\in I$ and a usual growth condition is satisfied (cf. \eqref{Hgrowth} below). We consider the family of systems

\begin{equation}\label{introHamnon}
\left\{
\begin{aligned}
Ju'(t)+\nabla_u \mathcal{H}_\lambda(t,u(t))&=0,\quad t\in\mathbb{R}\\
\lim_{t\rightarrow\pm\infty}u(t)&=0,
\end{aligned}
\right.
\end{equation}
where $\nabla_u$ denotes the gradient with respect to the variable $u\in\mathbb{R}^{2n}$ and we assume that $\nabla_u \mathcal{H}_\lambda(t,0)=0$ for all $\lambda\in I$. An instant $\lambda_0\in I$ is called a bifurcation point if there exists a sequence $(\lambda_n,u_{\lambda_n})$ such that $u_{\lambda_n}\not\equiv0$ satisfies the equation \eqref{introHamnon} for $\lambda_n$ and $u_{\lambda_n}$ tends to zero for $\lambda\rightarrow \lambda_0$ in the $C^1$-topology. The linearisations of the equations \eqref{introHamnon} are of the form \eqref{Hamiltonian}, and our bifurcation theorem states that non-trivial intersections of the stable and unstable subspaces $E^u_\lambda(0)$ and $E^s_\lambda(0)$ of \eqref{Hamiltonian} which give a non-vanishing Maslov index $\mu_{Mas}(E^u_\cdot(0),E^s_\cdot(0))$ cause bifurcation of homoclinics for \eqref{introHamnon}.\\
Our construction of the Maslov index is based on Abbondandolo and Majer's investigations on infinite dimensional stable and unstable subspaces \cite{Alberto}. Accordingly, we shall extend our theory in a subsequent project to Hamiltonian partial differential equations as in \cite{Bartsch}.\\ 
The paper is structured as follows: In the second section we adapt the definition of the spectral flow from \cite{UnbSpecFlow} to a class of (generally unbounded) selfadjoint Fredholm operators that is suitable for studying homoclinics of Hamiltonian systems. Moreover, we generalise a perturbation theorem of Robbin and Salamon from \cite{Robbin-Salamon} to this type of operators and show that the spectral flow can be computed by crossing forms. The third section briefly recalls the Maslov index for pairs of paths of Lagrangian subspaces in $\mathbb{R}^{2n}$ from \cite{Robbin-SalamonMAS}. In Section 4 we state our spectral flow formula and prove it in eight steps. Finally, in the fifth section we consider nonlinear Hamiltonian systems and apply our spectral flow formula to bifurcation of homoclinic trajectories. The paper has three appendices. Appendix A deals with elementary perturbation theory of quadratic forms and summarises well known facts that we use throughout the paper. Appendix B proves a technical lifting lemma for maps in the Lagrangian Grassmannian that we need in the proof of our spectral flow formula. Finally, Appendix C contains the proofs of two rather technical assertions regarding the spectral flow, which we separate from the second section for a better readability.\\      
At last, let us introduce some notation that we shall use henceforth without further reference. We have already mentioned that $I$ stands for the unit interval, however, the similar symbol $I_X$ will denote the identity operator on a space $X$ and we set for simplicity $I_k:=I_{\mathbb{R}^{k}}$ for $k\in\mathbb{N}$. We consider throughout smooth families $\{\Psi_\lambda\}_{\lambda\in I}$, where each $\Psi_\lambda$ is a matrix valued function on the real line. We denote by $\Psi'_\lambda(t)$ the derivative with respect to the variable $t\in\mathbb{R}$, whereas $\dot{\Psi}_\lambda(t)$ stands for the derivative with respect to the parameter $\lambda\in I$. We denote by $\Psi_\lambda(t)^\ast$ the transpose of $\Psi_\lambda(t)$. Finally, let us point out that $\lambda$ will usually be a parameter in $I$, except in Section \ref{sect:sfl} where it stands for an eigenvalue of a linear operator.  

\subsubsection*{Acknowledgements}
Parts of this work were introduced in a lecture series that the author gave at the Universit\`a degli studi di Torino in Italy in spring 2013. We are grateful to Anna Capietto and Alessandro Portaluri for inviting us to give these lectures, and to the audience for several valuable questions. Moreover, we would like to thank the anonymous referee for his careful reading of our manuscript and for pointing out to us an unnecessary assumption in our main theorems in Section 2, which has been removed.


\section{The spectral flow and crossing forms}\label{sect:sfl}
Let $W, H$ be real Hilbert spaces with a dense and continuous inclusion $W\hookrightarrow H$. We denote by $\mathcal{L}(W,H)$ the Banach space of all bounded operators, by $GL(W,H)\subset\mathcal{L}(W,H)$ the open subset of all invertible elements and by $\mathcal{S}(W,H)\subset\mathcal{L}(W,H)$ the closed subset of all operators which are selfadjoint when regarded as operators on $H$ with dense domain $W$. Finally, we denote by $\mathcal{FS}(W,H)$ the set of all Fredholm operators in $\mathcal{S}(W,H)$, and we recall that an operator in $\mathcal{S}(W,H)$ is Fredholm if and only if its kernel is of finite dimension and its image is closed.  Note that if $T$ is a selfadjoint operator on a Hilbert space $H$ having the domain $W$, then $W$ equipped with the graph norm of $T$ is continuously embedded in $H$. Consequently, for a single selfadjoint operator $T$ on $H$, the existence of a continuously embedded subspace $W$ is clear by definition.\\
Let us point out that there are two important special cases:

\begin{enumerate}
	\item[i)] If $W=H$, we shorten notation as usual and write e.g. $\mathcal{L}(H):=\mathcal{L}(H,H)$. Note that in this case all operators in $\mathcal{S}(H)$ are bounded, whereas elements of $\mathcal{S}(W,H)$ are never bounded as selfadjoint operators on $H$ if $W\neq H$.
	\item[ii)] The inclusion $W\hookrightarrow H$ is compact. This assumption in particular implies that elements of $\mathcal{S}(W,H)$ have compact resolvents, i.e. the linear operator $(\lambda-T)^{-1}:H\rightarrow H$ is compact if $\lambda-T\in GL(W,H)$ for a scalar $\lambda$. 
\end{enumerate}   
In what follows we will use without further reference that if $T\in\mathcal{S}(W,H)$, then $T+S\in\mathcal{S}(W,H)$ for all $S\in\mathcal{S}(H)$.


\subsection{The spectral flow}
The aim of this section is to define the spectral flow for paths in $\mathcal{FS}(W,H)$, where we essentially follow \cite{Phillips} in which the case $W=H$ was considered. In \cite{UnbSpecFlow} the spectral flow was constructed in the more general case of paths of unbounded selfadjoint Fredholm operators that are continuous in the gap-topology and in particular may have varying domains. Let us point out that every path in $\mathcal{FS}(W,H)$ is also continuous with respect to the gap-topology (cf. \cite[Prop. 2.2]{LeschSpecFlowUniqu}), so that we introduce here just a special case of the former work \cite{UnbSpecFlow}. However, if we restrict to paths in $\mathcal{FS}(W,H)$, which is completely sufficient for the applications that we have in mind, the theory turns out to be as straightforward as in the bounded case. Moreover, as we show in the subsequent section, the computation of the spectral flow by means of crossing forms, which was figured out by Robbin and Salamon in \cite{Robbin-Salamon} under assumption ii) and by Fitzpatrick, Pejsachowicz and Recht in \cite{Specflow} under assumption i) from above, holds in $\mathcal{FS}(W,H)$ for general $W$ and $H$.\\  
We denote for $T\in\mathcal{S}(W,H)$ by 

\[\sigma(T)=\{\lambda\in\mathbb{R}:\,\lambda-T\notin GL(W,H)\}\]
the spectrum of $T$ and we recall that this is a non-empty closed subset of the real line, which is bounded if and only if $W=H$. Moreover, if $W\hookrightarrow H$ is compact, then $\sigma(T)$ is discrete. For $a,b\notin\sigma(T)$, we set

\begin{align}\label{specproj}
P_{[a,b]}(T)=\re\left(\frac{1}{2\pi i}\int_\Gamma{(\lambda-T^\mathbb{C})^{-1}\,d\lambda}\right),
\end{align}
where $\Gamma$ is the circle of radius $\frac{b-a}{2}$ around $\frac{a+b}{2}$ and $\re$ denotes the real part of an operator on a complexified Hilbert space (cf. eg. \cite{domainshrinking}). Let us recall from \cite[\S XV.2]{GohbergClasses}, that if $[a,b]\cap\sigma(T)$ consists solely of isolated eigenvalues of finite type, then

\[\im P_{[a,b]}(T)=\bigoplus_{\lambda\in(a,b)}{\ker(\lambda-T)}.\]
Even though the following lemma is folklore, we include it for the reader's convenience since it plays a decisive role in the definition of the spectral flow.

\begin{lemma}\label{0isolated}
If $T\in\mathcal{FS}(W,H)$, then either $0$ belongs to the resolvent set of $T$ or it is an isolated eigenvalue of finite multiplicity.
\end{lemma}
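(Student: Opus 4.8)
The plan is to show that if $T \in \mathcal{FS}(W,H)$, then $0$ is at worst an isolated point of $\sigma(T)$, and that the corresponding spectral projection has finite rank; combined with selfadjointness this gives that $0$ is an isolated eigenvalue of finite multiplicity. First I would note that since $T$ is a Fredholm operator (as an element of $\mathcal{S}(W,H)$), its kernel $\ker T$ is finite-dimensional and its image $\im T$ is closed in $H$. By selfadjointness we have the orthogonal decomposition $H = \ker T \oplus \overline{\im T} = \ker T \oplus \im T$, and the restriction of $T$ to the orthogonal complement of $\ker T$ (intersected with $W$) is a bijection onto $\im T$. This already handles the finite multiplicity part once we know $0$ is isolated in the spectrum; the heart of the matter is the isolation.

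To prove isolation, I would argue that the restriction $\tilde{T}$ of $T$ to $(\ker T)^\perp$, regarded as a selfadjoint operator on the Hilbert space $(\ker T)^\perp$ with domain $W \cap (\ker T)^\perp$, is invertible, hence $0$ lies in its resolvent set. Invertibility of $\tilde T$ follows because it is injective by construction and surjective since $\im T = \im \tilde T$ is closed and equal to $(\ker T)^\perp$ by selfadjointness. Since the resolvent set is open, there is $\varepsilon > 0$ with $(-\varepsilon,\varepsilon) \subset \rho(\tilde T)$. Now for $0 < |\mu| < \varepsilon$, the operator $\mu - T$ on $H$ decomposes, with respect to $H = \ker T \oplus (\ker T)^\perp$, as $\mu\, I_{\ker T} \oplus (\mu - \tilde T)$, which is invertible because $\mu \neq 0$ and $\mu \in \rho(\tilde T)$; hence $(-\varepsilon,\varepsilon) \setminus \{0\} \subset \rho(T)$, so either $0 \in \rho(T)$ (if $\ker T = 0$) or $0$ is an isolated point of $\sigma(T)$, which for a selfadjoint operator means it is an eigenvalue, of multiplicity $\dim \ker T < \infty$.

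The main technical point to be careful about — and what I expect to be the chief obstacle — is justifying that the decomposition $\mu - T = \mu\, I_{\ker T} \oplus (\mu - \tilde T)$ is legitimate at the level of \emph{unbounded} operators with domain $W$: one needs that $W$ itself splits compatibly as $(W \cap \ker T) \oplus (W \cap (\ker T)^\perp)$. This holds because $\ker T \subset W$ is finite-dimensional, so the $H$-orthogonal projection $P$ onto $\ker T$ restricts to a bounded projection on $W$ (its range being finite-dimensional and contained in $W$, it is automatically continuous there), whence $W = PW \oplus (I-P)W$ with $PW = \ker T$ and $(I-P)W = W \cap (\ker T)^\perp$. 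With this splitting in hand the block decomposition is diagonal because $T$ maps $W \cap (\ker T)^\perp$ into $(\ker T)^\perp$ (again by selfadjointness, $\langle Tu, v\rangle = \langle u, Tv\rangle = 0$ for $u \in W \cap (\ker T)^\perp$, $v \in \ker T$) and annihilates $\ker T$, and the argument above goes through verbatim.
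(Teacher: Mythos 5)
Your proposal is correct and follows essentially the same route as the paper: both decompose $H = \ker T \oplus \im T$ and $W$ accordingly, observe that the restriction of $T$ to $W \cap (\ker T)^\perp$ is an isomorphism onto $\im T$, and use an openness argument to push the spectrum of this restriction away from $0$, concluding via the block-diagonal form of $\mu - T$. The only cosmetic difference is that the paper invokes openness of $GL(W', \im T)$ in $\mathcal{L}(W', \im T)$ and finishes with a short direct computation, whereas you phrase the same fact as openness of the resolvent set of $\tilde T$; your extra care about splitting $W$ compatibly is exactly the point the paper handles by setting $W' = \iota^{-1}(\im T)$.
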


\begin{proof}
Since the set of Fredholm operators is open in $\mathcal{L}(W,H)$, there exists $\varepsilon>0$ such that $\lambda-T$ is Fredholm of index $0$ for all $|\lambda|<\varepsilon$. Consequently, for $|\lambda|<\varepsilon$, either $\lambda-T$ is invertible or it has a finite dimensional kernel. It remains to show that $0$ is not a limit point of $\sigma(T)$.\\
Since $T$ is selfadjoint and Fredholm, there is an orthogonal decomposition $H=\ker T\oplus\im T$. If we set $W'=\iota^{-1}(\im T)$, where $\iota:W\hookrightarrow H$ denotes the canonical inclusion, we obtain a decomposition $W=\ker T\oplus W'$ into closed subspaces of $W$. The restriction $T'$ of $T$ to $W'$ is an isomorphism onto the closed subspace $\im T$ of $H$. Since $GL(W',\im T)\subset\mathcal{L}(W',\im T)$ is open, there exists $\kappa>0$ such that $\sigma(T')\cap(-\kappa,\kappa)=\emptyset$. Let now $0<|\lambda|<\min\{\kappa,\varepsilon\}$ and let $u=u_1+u_2\in\ker T\oplus W'=W$ be an element of $\ker(\lambda-T)$. Then $0=\lambda u-Tu=\lambda u_1+\lambda u_2-Tu_2$ and so 

\[\lambda u_1=(T-\lambda)u_2=(T'-\lambda)u_2.\]
Since the left hand side of this equation is in $\ker T$ and the right hand side is in $\im T=(\ker T)^\perp$, we conclude that $\lambda u_1=(T'-\lambda)u_2=0$. Finally, from $\lambda\neq 0$ and the injectivity of $T'-\lambda$, it follows that $u=u_1+u_2=0$ and so $\lambda\notin\sigma(T)$.    
\end{proof}

Next, we show that the projections \eqref{specproj} depend continuously on the operator $T$.

\begin{lemma}\label{lemma-contproj}
For $a,b\in\mathbb{R}$, $a<b$, the set

\begin{align*}
\Omega_{[a,b]}=\{T\in\mathcal{S}(W,H):a,b\notin\sigma(T)\}\subset\mathcal{S}(W,H)
\end{align*}
is open, and the map

\begin{align}\label{specprojII}
\Omega&_{[a,b]}\rightarrow\mathcal{L}(H),\quad T\mapsto P_{[a,b]}(T)
\end{align}
is continuous.
\end{lemma}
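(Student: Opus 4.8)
The plan is to establish both assertions — openness of $\Omega_{[a,b]}$ and continuity of $T\mapsto P_{[a,b]}(T)$ — simultaneously by exploiting the local uniform control of resolvents that comes from the Fredholm/invertibility structure. First I would fix $T_0\in\Omega_{[a,b]}$, so that $a,b\notin\sigma(T_0)$, i.e. $a-T_0,b-T_0\in GL(W,H)$. Since $GL(W,H)$ is open in $\mathcal{L}(W,H)$ and the inclusion $\mathcal{S}(W,H)\hookrightarrow\mathcal{L}(W,H)$ is continuous, there is a neighbourhood $\mathcal{U}$ of $T_0$ in $\mathcal{S}(W,H)$ on which $a-T,b-T$ remain invertible; this already shows $\mathcal{U}\subset\Omega_{[a,b]}$ and hence openness. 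The point is that I need more than invertibility at the two endpoints: I need uniform invertibility of $\lambda-T$ for $\lambda$ on the whole contour $\Gamma$, so that the Cauchy integral \eqref{specproj} makes sense and varies continuously. For this I would argue that the map $GL(W,H)\to\mathcal{L}(H,W)$, $A\mapsto A^{-1}$, is continuous, and combine it with the observation that $\{(\lambda,T):\lambda-T\in GL(W,H)\}$ is open in $\mathbb{C}\times\mathcal{S}(W,H)$ (working with the complexifications $T^{\mathbb{C}}$ throughout). By compactness of $\Gamma$, after possibly shrinking $\mathcal{U}$, I get a uniform bound $\sup_{\lambda\in\Gamma}\|(\lambda-T^{\mathbb{C}})^{-1}\|_{\mathcal{L}(H^{\mathbb{C}})}\le M$ for all $T\in\mathcal{U}$, and in particular $\Gamma\cap\sigma(T^{\mathbb{C}})=\emptyset$.

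Next I would prove continuity of the resolvent in the operator variable, as a map into $\mathcal{L}(H)$, uniformly in $\lambda\in\Gamma$. The key identity is the second resolvent formula: for $T,T'\in\mathcal{U}$ and $\lambda\in\Gamma$,
\[
(\lambda-T'^{\mathbb{C}})^{-1}-(\lambda-T^{\mathbb{C}})^{-1}=(\lambda-T'^{\mathbb{C}})^{-1}(T'^{\mathbb{C}}-T^{\mathbb{C}})(\lambda-T^{\mathbb{C}})^{-1}.
\]
Here one has to be slightly careful about the spaces: $(\lambda-T^{\mathbb{C}})^{-1}$ maps $H^{\mathbb{C}}$ into $W^{\mathbb{C}}$, then $T'^{\mathbb{C}}-T^{\mathbb{C}}\in\mathcal{L}(W^{\mathbb{C}},H^{\mathbb{C}})$ acts, then $(\lambda-T'^{\mathbb{C}})^{-1}$ brings us back to $H^{\mathbb{C}}$. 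Using the uniform bound $M$ on both resolvent factors (as operators $H^{\mathbb{C}}\to W^{\mathbb{C}}$, where I would upgrade $M$ to also bound the $\mathcal{L}(H^{\mathbb{C}},W^{\mathbb{C}})$-norms, which is possible since $(\lambda-T^{\mathbb{C}})^{-1}=\big((\lambda-T^{\mathbb{C}})|_{W^{\mathbb{C}}}\big)^{-1}$ and inversion $GL(W^{\mathbb{C}},H^{\mathbb{C}})\to\mathcal{L}(H^{\mathbb{C}},W^{\mathbb{C}})$ is continuous), I obtain
\[
\big\|(\lambda-T'^{\mathbb{C}})^{-1}-(\lambda-T^{\mathbb{C}})^{-1}\big\|_{\mathcal{L}(H^{\mathbb{C}})}\le M^2\,\|T'-T\|_{\mathcal{L}(W,H)}
\]
for all $\lambda\in\Gamma$, with a constant independent of $\lambda$.

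Finally I would pass to the spectral projections. Subtracting the Cauchy integrals and using the uniform estimate just obtained,
\[
\big\|P_{[a,b]}(T')-P_{[a,b]}(T)\big\|_{\mathcal{L}(H)}\le\frac{1}{2\pi}\,\mathrm{length}(\Gamma)\cdot M^2\,\|T'-T\|_{\mathcal{L}(W,H)},
\]
where I use that taking real parts is norm-nonincreasing on $\mathcal{L}(H^{\mathbb{C}})$ and that the restriction-to-$H$ of an operator commuting with complex conjugation does not increase the norm. This gives local Lipschitz continuity of \eqref{specprojII}, hence continuity. I expect the main obstacle to be the bookkeeping of the two different norms — ensuring that the resolvents are bounded uniformly on $\Gamma$ not just as operators on $H^{\mathbb{C}}$ but as operators $H^{\mathbb{C}}\to W^{\mathbb{C}}$, so that the composition in the second resolvent identity is genuinely controlled — together with the standard but slightly tedious verification that the complexification and the passage to real parts behave well with respect to all these norms and with the contour integral. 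Everything else is a routine application of the openness of $GL(W,H)$, continuity of operator inversion, and compactness of $\Gamma$.
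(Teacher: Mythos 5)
Your proposal is correct and follows essentially the same route as the paper: both rest on the contour estimate $\|P_{[a,b]}(T')-P_{[a,b]}(T)\|\leq\frac{b-a}{2}\sup_{\lambda\in\Gamma}\|(\lambda-T'^{\mathbb{C}})^{-1}-(\lambda-T^{\mathbb{C}})^{-1}\|$ together with uniform control of the resolvents over the compact contour $\Gamma$, and both get openness of $\Omega_{[a,b]}$ from the openness of $GL(W,H)$. The only (harmless) difference is that where the paper invokes joint continuity of $(\lambda,T)\mapsto(\lambda-T^{\mathbb{C}})^{-1}$ and a finite $\varepsilon$-net on $\Gamma$, you use the second resolvent identity plus a uniform bound on $\|(\lambda-T^{\mathbb{C}})^{-1}\|_{\mathcal{L}(H^{\mathbb{C}},W^{\mathbb{C}})}$ obtained by compactness, which even upgrades the conclusion to local Lipschitz continuity.
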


\begin{proof}
The fist assertion is an immediate consequence of the openness of $GL(W,H)$. In order to show the continuity of the map \eqref{specprojII}, we note at first that for any $T,S\in\Omega_{[a,b]}$

\begin{align}\label{ClosedOps-align-contproj}
\|P_{[a,b]}(T)-P_{[a,b]}(S)\|\leq\frac{b-a}{2}\max_{\lambda\in\Gamma}\|(\lambda-T^\mathbb{C})^{-1}-(\lambda-S^\mathbb{C})^{-1}\|,
\end{align}
where $\Gamma$ denotes the circle of radius $\frac{b-a}{2}$ around $\frac{a+b}{2}$. Moreover, the map

\begin{align}\label{rescont}
\Gamma\times\Omega_{[a,b]}\ni(\lambda,T)\mapsto (\lambda-T^\mathbb{C})^{-1}\in\mathcal{L}(H^\mathbb{C})
\end{align}
is continuous. Let now $T\in\Omega_{[a,b]}$ and $\varepsilon>0$. By the continuity of \eqref{rescont}, for any $\lambda'\in\Gamma$ there exists $\delta(\lambda')>0$  such that 

\begin{align*}
\|(\lambda-S^\mathbb{C})^{-1}-(\lambda'-T^\mathbb{C})^{-1}\|<\frac{\varepsilon}{b-a}
\end{align*}
for all $\lambda\in U(\lambda',\delta(\lambda')):=\{\lambda\in\Gamma:|\lambda-\lambda'|<\delta(\lambda')\}$ and $\|S-T\|<\delta(\lambda')$. We now take $\lambda_1,\ldots,\lambda_n\in\Gamma$ such that $\bigcup^n_{i=1}{U(\lambda_i,\delta(\lambda_i))}=\Gamma$ and we set $\delta:=\min_{1\leq i\leq n}\delta(\lambda_i)$.\\
Now, for any $\lambda\in\Gamma$ there exists $1\leq i\leq n$ such that $\lambda\in U(\lambda_i,\delta(\lambda_i))$ and hence we obtain for $S\in\Omega_{[a,b]}$,  $\|S-T\|<\delta$,

\begin{align*}
\|(\lambda-T^\mathbb{C})^{-1}-(\lambda-S^\mathbb{C})^{-1}\|&\leq \|(\lambda-T^\mathbb{C})^{-1}-(\lambda_i-T^\mathbb{C})^{-1}\|+\|(\lambda_i-T^\mathbb{C})^{-1}-(\lambda-S^\mathbb{C})^{-1}\|\\
&<\frac{2\varepsilon}{b-a}.
\end{align*}
We conclude by \eqref{ClosedOps-align-contproj} that $\|P_{[a,b]}(T)-P_{[a,b]}(S)\|<\varepsilon$ for all $S\in\Omega_{[a,b]}$ such that $\|S-T\|<\delta$.
\end{proof}

The following corollary paves the way for the definition of the spectral flow.

\begin{cor}\label{lem:sflneighbourhood}
Let $T_0\in\mathcal{FS}(W,H)$ be fixed.

\begin{enumerate}
	\item[i)] There exists a positive real number $a\notin\sigma(T_0)$ and an open connected neighbourhood $N\subset\mathcal{FS}(W,H)$ of $T_0$ such that $\pm a\notin\sigma(T)$ for all $T\in N$,
	
	\begin{align*}
	N\rightarrow\mathcal{L}(H),\quad T\mapsto P_{[-a,a]}(T)
	\end{align*}
is continuous, and the projection $P_{[-a,a]}(T)$ has constant finite rank for all $T\in N$.
\item[ii)] If $N$ is a neighbourhood as in i) and $-a\leq c<d\leq a$ are such that $c,d\notin\sigma(T)$ for all $T\in N$, then $T\mapsto P_{[c,d]}(T)$ is continuous on $N$. Moreover, the rank of $P_{[c,d]}(T)$, $T\in N$, is finite and constant. 
\end{enumerate}
\end{cor}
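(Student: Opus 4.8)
The plan is to derive Corollary~\ref{lem:sflneighbourhood} from Lemma~\ref{0isolated} and Lemma~\ref{lemma-contproj} by a straightforward compactness/continuity argument. For part i), since $T_0\in\mathcal{FS}(W,H)$, Lemma~\ref{0isolated} tells us that $0$ is either in the resolvent set of $T_0$ or an isolated eigenvalue of finite multiplicity; in either case there exists $a>0$ with $[-a,a]\cap\sigma(T_0)$ consisting of at most the single point $0$, so in particular $\pm a\notin\sigma(T_0)$. Then $T_0\in\Omega_{[-a,a]}$, which by Lemma~\ref{lemma-contproj} is open in $\mathcal{S}(W,H)$; intersecting with $\mathcal{FS}(W,H)$ (also open in $\mathcal{L}(W,H)$, hence open in $\mathcal{S}(W,H)$) and passing to the connected component of $T_0$ gives an open connected neighbourhood $N_0$ on which $\pm a\notin\sigma(T)$ and $T\mapsto P_{[-a,a]}(T)$ is continuous. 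Since $[-a,a]\cap\sigma(T_0)\subset\{0\}$ and $0$ is an eigenvalue of finite type (or absent), the spectral projection $P_{[-a,a]}(T_0)$ has finite rank equal to $\dim\ker T_0$. Continuity of $T\mapsto P_{[-a,a]}(T)$ as a map into $\mathcal{L}(H)$ together with the standard fact that the rank of a norm-continuous family of projections is locally constant (two projections at distance less than $1$ have the same rank) then furnishes a possibly smaller open connected neighbourhood $N\subset N_0$ on which $\operatorname{rank}P_{[-a,a]}(T)$ is finite and constant.

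For part ii), I would fix such an $N$ and constants $-a\le c<d\le a$ with $c,d\notin\sigma(T)$ for all $T\in N$. The continuity of $T\mapsto P_{[c,d]}(T)$ on $N$ is immediate from Lemma~\ref{lemma-contproj}, since $N\subset\Omega_{[c,d]}$ by hypothesis. For finiteness and constancy of the rank, observe that for every $T\in N$ we have the operator inequality $0\le P_{[c,d]}(T)\le P_{[-a,a]}(T)$ between the associated (orthogonal) spectral projections — this follows because $[c,d]\subset[-a,a]$ implies $\operatorname{im}P_{[c,d]}(T)\subset\operatorname{im}P_{[-a,a]}(T)$, as both images are spanned by the eigenspaces for eigenvalues in the respective intervals (using that $[-a,a]\cap\sigma(T)$ consists of isolated eigenvalues of finite type, again by Lemma~\ref{0isolated} applied to $\lambda-T$ for $\lambda\in[-a,a]$, or more simply by the Riesz functional calculus identity $P_{[c,d]}(T)=P_{[c,d]}(T)P_{[-a,a]}(T)$). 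Hence $\operatorname{rank}P_{[c,d]}(T)\le\operatorname{rank}P_{[-a,a]}(T)<\infty$ for all $T\in N$, and constancy follows once more from the local-constancy of the rank of a norm-continuous family of finite-rank projections together with the connectedness of $N$.

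The main point requiring a little care — and the only place a genuine argument, rather than a citation, is needed — is the reduction showing that $P_{[-a,a]}(T)$ genuinely has \emph{finite} rank for $T$ near $T_0$, i.e. that after shrinking, $[-a,a]\cap\sigma(T)$ still consists only of isolated eigenvalues of finite type rather than, say, a piece of essential spectrum migrating into the interval. This is guaranteed by restricting to $N\subset\mathcal{FS}(W,H)$: for each $T\in N$ and each $\lambda\in[-a,a]$, the operator $\lambda-T$ differs from the Fredholm operator $-T$ (well, $T$ is Fredholm, and $\lambda I_H\in\mathcal{S}(H)$, so $\lambda-T\in\mathcal{S}(W,H)$) by the bounded selfadjoint operator $\lambda I_H$; since $T$ is Fredholm and the Fredholm property with index $0$ is stable under small bounded selfadjoint perturbations (shrinking $a$ if necessary, as in the proof of Lemma~\ref{0isolated}), every $\lambda\in[-a,a]$ that lies in $\sigma(T)$ is an eigenvalue of finite multiplicity, and Lemma~\ref{0isolated}, applied with $T$ replaced by $\lambda-T$ wherever its hypotheses hold, shows such eigenvalues are isolated, so $\operatorname{im}P_{[-a,a]}(T)=\bigoplus_{\mu\in(-a,a)}\ker(\mu-T)$ is finite-dimensional. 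I expect the genuinely nontrivial input is packaged already in Lemma~\ref{0isolated} and Lemma~\ref{lemma-contproj}, so that the corollary itself is essentially a matter of assembling openness of $\mathcal{FS}(W,H)$, continuity of the spectral projections, and local constancy of the rank of continuous projection families.
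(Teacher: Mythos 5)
Your argument is correct and takes essentially the same route as the paper: choose $a$ via Lemma~\ref{0isolated}, pass to a connected neighbourhood of $T_0$ inside $\Omega_{[-a,a]}$, get continuity from Lemma~\ref{lemma-contproj}, and use that two projections in $\mathcal{L}(H)$ at distance less than $1$ have the same rank, together with connectedness. Your final paragraph is superfluous, though harmless: since $P_{[-a,a]}(T_0)$ already has finite rank $\dim\ker T_0$, the same rank-stability fact forces $P_{[-a,a]}(T)$ to have this finite rank throughout the connected neighbourhood, so no separate Fredholm-perturbation argument for $\lambda-T$, $\lambda\in[-a,a]$, is needed.
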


\begin{proof}
By Lemma \ref{0isolated} there exists $a>0$ such that $[-a,a]\cap\sigma(T_0)\subset\{0\}$. Now we take as $N$ the connected component of $T_0$ in $\Omega_{[-a,a]}$. Since $\dim\im P=\dim\im Q$ for any two projections $P,Q\in\mathcal{L}(H)$ such that $\|P-Q\|<1$ (cf. \cite[Lemma II.4.3]{GohbergClasses}), the rank of $P_{[-a,a]}(T)$ is locally constant on $\Omega_{[-a,a]}$. Consequently, $\dim\im P_{[-a,a]}(T)=\dim\ker T_0$ for all $T\in N$. The remaining assertions are immediate consequences of the previous lemma.
\end{proof}

It is worth to point out that if $c,d$ are as in the previous corollary, then $\sigma(T)\cap[c,d]$, $T\in N$, consists of a finite number of eigenvalues of finite multiplicity (cf. \cite[\S XV.2]{GohbergClasses}), and consequently

\begin{align}\label{projdim}
\im P_{[c,d]}(T)=E_{[c,d]}(T):=\bigoplus_{\lambda\in[c,d]}{\ker(\lambda-T)}.
\end{align}
Let now $\mathcal{A}:I\rightarrow\mathcal{FS}(W,H)$ be a continuous path. By Corollary \ref{lem:sflneighbourhood}, we conclude that for every $t\in I$ there exists $a>0$ and an open connected neighbourhood $N_{t,a}\subset\mathcal{FS}(W,H)$ of $\mathcal{A}_t$ such that $\pm a\notin\sigma(T)$ for all $T\in N_{t,a}$ and the map

\begin{align*}
N_{t,a}\rightarrow\mathcal{L}(H),\quad T\mapsto P_{[-a,a]}(T)
\end{align*} 
is continuous. Moreover, all $P_{[-a,a]}(T)$, $T\in N_{t,a}$, have the same finite rank. Now the counterimages of the $N_{t,a}$ under $\mathcal{A}$ define an open covering of the unit interval and, by using the Lebesgue number of this covering, we can find $0=t_0\leq t_1\leq\ldots\leq t_n=1$ and $a_i>0$, $i=1,\ldots n$, such that the maps

\begin{align*}
[t_{i-1},t_i]\ni t\mapsto P_{[-a_i,a_i]}(\mathcal{A}_t)\in\mathcal{L}(H)
\end{align*}
are continuous and have constant finite rank. We define the spectral flow of $\mathcal{A}:I\rightarrow\mathcal{FS}(W,H)$ by (cf. \eqref{projdim})

\begin{align}\label{IndPre-align-specflow}
\sfl(\mathcal{A})=\sum^n_{i=1}{\left(\dim E_{[0,a_i]}(\mathcal{A}_{t_i})-\dim E_{[0,a_i]}(\mathcal{A}_{t_{i-1}})\right)}.
\end{align}
The reader can find a proof of the well-definedness in Appendix C, as well as a proof of the following homotopy invariance property iv). The other assertions of Lemma \ref{IndPre-lemma-sflbasicprop} are immediate consequences of the definition \eqref{IndPre-align-specflow}.

\begin{lemma}\label{IndPre-lemma-sflbasicprop}
\begin{enumerate}
\item[i)] If $\mathcal{A}^1,\mathcal{A}^2:I\rightarrow\mathcal{FS}(W,H)$ are two paths such that $\mathcal{A}^2_0=\mathcal{A}^1_1$, then

	\begin{align*}
	\sfl(\mathcal{A}^1\ast\mathcal{A}^2)=\sfl(\mathcal{A}^1)+\sfl(\mathcal{A}^2).
	\end{align*}
\item[ii)] If $\mathcal{A}:I\rightarrow\mathcal{FS}(W,H)$ is continuous and $\mathcal{A}'$ is defined by $\mathcal{A}'_t=\mathcal{A}_{1-t}$, then

\begin{align*}
\sfl(\mathcal{A}')=-\sfl(\mathcal{A}).
\end{align*}

\item[iii)] If $\mathcal{A}:I\rightarrow \mathcal{FS}(W,H)$ is continuous and $\mathcal{A}_t$ invertible for all $t\in I$, then $\sfl(\mathcal{A})=0$.

\item[iv)] Let $h:I\times I\rightarrow\mathcal{FS}(W,H)$ be a continuous map such that $h(I\times\partial I)\subset GL(W,H)$. Then

\[\sfl(h(0,\cdot))=\sfl(h(1,\cdot)).\]
\end{enumerate}
\end{lemma}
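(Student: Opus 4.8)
The plan is to establish properties i)--iii) directly from the definition \eqref{IndPre-align-specflow}, since these are essentially combinatorial, and then to reduce the homotopy invariance iv) to the well-definedness of the spectral flow, which the excerpt defers to Appendix C. For i), I would choose a single partition $0=t_0\leq\cdots\leq t_n=1$ that is admissible for $\mathcal{A}^1$ on $[0,1]$; rescaling it to $[0,\frac12]$ and concatenating with a rescaled admissible partition for $\mathcal{A}^2$ on $[\frac12,1]$ gives an admissible partition for the concatenation $\mathcal{A}^1\ast\mathcal{A}^2$ (here one uses that $\mathcal{A}^2_0=\mathcal{A}^1_1$ so the matching endpoint is shared). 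The telescoping sum \eqref{IndPre-align-specflow} then splits as the sum of the two partial sums, giving $\sfl(\mathcal{A}^1)+\sfl(\mathcal{A}^2)$. For ii), reversing the path sends an admissible partition $\{t_i\}$ with numbers $\{a_i\}$ to the admissible partition $\{1-t_{n-i}\}$ with numbers $\{a_{n-i+1}\}$, and each summand $\dim E_{[0,a_i]}(\mathcal{A}_{t_i})-\dim E_{[0,a_i]}(\mathcal{A}_{t_{i-1}})$ changes sign; summing gives $-\sfl(\mathcal{A})$. For iii), if $\mathcal{A}_t$ is invertible for all $t$, then by Lemma~\ref{0isolated} one can choose, in Corollary~\ref{lem:sflneighbourhood}, the numbers $a_i$ small enough that $[-a_i,a_i]\cap\sigma(\mathcal{A}_t)=\emptyset$ on each subinterval $[t_{i-1},t_i]$ (using compactness and continuity of $t\mapsto\mathcal{A}_t$ together with the openness of $GL(W,H)$); then every $E_{[0,a_i]}(\mathcal{A}_{t})=\{0\}$ and each summand vanishes.

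For the homotopy invariance iv), I would argue as follows. By the well-definedness of the spectral flow (Appendix C), $\sfl$ is a well-defined function on continuous paths in $\mathcal{FS}(W,H)$; combined with i) it behaves additively under concatenation, and with ii) it is odd under reversal, so $\sfl$ of a loop that is invertible at the endpoints is a homotopy invariant relative to endpoints provided we can show $\sfl$ vanishes on any path that can be homotoped, through paths in $\mathcal{FS}(W,H)$ keeping the endpoints fixed in $GL(W,H)$, to a path lying entirely in $GL(W,H)$. Concretely, given $h:I\times I\to\mathcal{FS}(W,H)$ with $h(I\times\partial I)\subset GL(W,H)$, consider the boundary loop of the square: the path $h(0,\cdot)$ followed by the (constant-in-the-first-variable, hence invertible) edge $s\mapsto h(s,1)$, followed by $h(1,\cdot)$ reversed, followed by the edge $s\mapsto h(s,0)$ reversed. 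This loop is the boundary of the homotopy $h$, so it is contractible in $\mathcal{FS}(W,H)$; the key point is that a contractible loop has zero spectral flow, which follows because one can use the contraction to build a partition-compatible family of spectral projections of locally constant rank — this is exactly the content (and the proof technique) of the well-definedness argument in Appendix C, applied to the $2$-parameter family. Since the two edge-paths are invertible throughout, by iii) they contribute $0$ to the spectral flow of the boundary loop, and by i) and ii) the loop's spectral flow equals $\sfl(h(0,\cdot))-\sfl(h(1,\cdot))$; hence this difference is $0$.

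\emph{Main obstacle.} The genuinely nontrivial assertion is iv), and its difficulty is precisely that it rests on the 2-parameter refinement of the well-definedness argument: one must produce, from the homotopy $h$ together with Corollary~\ref{lem:sflneighbourhood}, a finite grid on $I\times I$ and radii $a_{ij}>0$ so that on each small square $T\mapsto P_{[-a_{ij},a_{ij}]}(T)$ is continuous of constant finite rank, and then check that the alternating sum of dimensions of the $E_{[0,a]}$-spaces around the boundary of the grid telescopes to zero because each interior edge is counted twice with opposite signs. Keeping track of the compatibility of the radii $a_{ij}$ across adjacent squares (one uses part ii) of Corollary~\ref{lem:sflneighbourhood}, which allows replacing $a$ by any $c,d$ with $-a\le c<d\le a$ avoiding the spectrum, to pass between the radii chosen on neighbouring cells) is the technical heart of the matter. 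Since the excerpt explicitly relegates both the well-definedness and the proof of iv) to Appendix C, in the body I would simply state that iv) follows from the two-parameter version of the well-definedness argument together with i), ii) and iii) as sketched above, and defer the details.
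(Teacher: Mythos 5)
Your proposal is correct and follows essentially the same route as the paper: parts i)--iii) are read off directly from the definition \eqref{IndPre-align-specflow} (the paper calls them immediate consequences), and for iv) your grid argument --- covering $h(I\times I)$ by neighbourhoods from Corollary \ref{lem:sflneighbourhood}, subdividing the square via a Lebesgue number so that each cell maps into one such neighbourhood, letting interior edge contributions cancel, and killing the edges in $GL(W,H)$ by iii) --- is exactly the Appendix C proof, which merely organises the cancellation column by column through the observation (Lemma \ref{IndPre-lemma-N}) that two paths inside one such neighbourhood with common endpoints have equal spectral flow. The only cosmetic difference is that your "contractible loop has zero spectral flow" phrasing is shorthand for this grid cancellation rather than an independent step, and, as you note, the radius-compatibility issue disappears once one cancels spectral-flow numbers of shared edges (using well-definedness) instead of dimension counts.
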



\subsection{Regular crossings and crossing forms}
In this section we discuss a method for computing the spectral flow which was introduced in \cite{Robbin-Salamon} and \cite{Specflow}, respectively. Here we follow essentially the lines of Robbin and Salamon \cite{Robbin-Salamon}, however, we will not assume that $W$ is compactly embedded in $H$, so that in our case the spectra of the operators are not necessarily discrete. This requires some modifications in the proofs of \cite{Robbin-Salamon}, however, since spectra of selfadjoint Fredholm operators cannot accumulate at $0$ (cf. Lemma \ref{0isolated}), we can apply the former arguments locally in a neighbourhood of $0$, which suffices for the computation of the spectral flow. Finally, let us point out that we in particular allow $W$ to coincide with $H$, in which case Theorem \ref{thm:crossing} below was proven by Fitzpatrick, Pejsachowicz and Recht in \cite{Specflow}.\\
From now on we assume that $\mathcal{A}:I\rightarrow\mathcal{FS}(W,H)\subset\mathcal{L}(W,H)$ is a continuously differentiable path. We denote by $\dot{\mathcal{A}}_{t_0}$ the derivative of $\mathcal{A}$ with respect to the parameter $t\in I$ at $t_0$. 

\begin{defi}
An instant $t_0\in I$ is called a \textit{crossing} if $\ker\mathcal{A}_{t_0}\neq 0$. The \textit{crossing form} at 
$t_0$ is the quadratic form defined by

\[\Gamma(\mathcal{A},t_0):\ker\mathcal{A}_{t_0}\rightarrow\mathbb{R},\,\,\Gamma(\mathcal{A},t_0)[u]=\langle\dot{\mathcal{A}}_{t_0}u,u\rangle_H.\]
A crossing $t_0$ is called \textit{regular}, if $\Gamma(\mathcal{A},t_0)$ is non-degenerate.
\end{defi} 

The aim of this section is to prove the following two theorems.

\begin{theorem}\label{thm:Sard}
There exists $\varepsilon>0$ such that 
\begin{itemize}
	\item[i)] $\mathcal{A}+\delta\, I_H$ is a path in $\mathcal{FS}(W,H)$ for all $|\delta|<\varepsilon$;
	\item[ii)] $\mathcal{A}+\delta\, I_H$ has only regular crossings for almost every $\delta\in(-\varepsilon,\varepsilon)$.
\end{itemize}
\end{theorem}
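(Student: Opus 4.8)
The plan is to prove both items by a standard transversality/Sard argument, adapted so that it only uses information in a neighbourhood of $0$ in the spectrum, which is all we are entitled to since $W$ need not be compactly embedded in $H$.

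First I would establish (i). The map $\delta\mapsto\mathcal{A}_t+\delta I_H$ is affine in $\delta$, and $I_H\in\mathcal{S}(H)$, so $\mathcal{A}_t+\delta I_H\in\mathcal{S}(W,H)$ for every $\delta$ by the remark preceding the section. To get the Fredholm property uniformly in $t$, I would use compactness of $I$: since $\mathcal{FS}(W,H)$ is open in $\mathcal{S}(W,H)$ (the set of Fredholm operators being open in $\mathcal{L}(W,H)$) and $\mathcal{A}(I)$ is compact, there is a uniform $\varepsilon>0$ with $\mathcal{A}_t+\delta I_H\in\mathcal{FS}(W,H)$ for all $t\in I$ and $|\delta|<\varepsilon$. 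Shrinking $\varepsilon$ further if needed (again using compactness of $I$ together with Corollary \ref{lem:sflneighbourhood}), I may also assume there is a single $a>0$ so that for all $t\in I$ and $|\delta|<\varepsilon$ the interval $[-a,a]$ meets $\sigma(\mathcal{A}_t+\delta I_H)$ only in isolated eigenvalues of finite multiplicity, and the associated spectral projection $P_{[-a,a]}$ varies continuously.

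For (ii) the idea is to localise near $0$. Fix $t_0\in I$; by the above there is a neighbourhood $[t_0-\eta,t_0+\eta]\cap I$ and an $a>0$ so that on this neighbourhood $E_{[-a,a]}(\mathcal{A}_t+\delta I_H)$ has constant finite dimension $m$ for all small $\delta$, and the eigenvalues of $\mathcal{A}_t+\delta I_H$ in $[-a,a]$ are precisely $\delta$ plus those of $\mathcal{A}_t$ in $[-a-\delta,a-\delta]$; more to the point, $u\in\ker(\mathcal{A}_t+\delta I_H)$ iff $-\delta$ is an eigenvalue of $\mathcal{A}_t$ with eigenvector $u$. Restricting to the finite-dimensional spectral subspace and using analytic/smooth perturbation theory for the family $t\mapsto \mathcal{A}_t|_{E_{[-a,a]}}$ (Appendix A, or \cite{Kato}), one parametrises the eigenvalues in $[-a,a]$ by finitely many $C^1$ functions $\lambda_1(t),\dots,\lambda_m(t)$. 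Then $t$ is a crossing of $\mathcal{A}+\delta I_H$ exactly when some $\lambda_j(t)=-\delta$, and the crossing form at such $t$ on the corresponding eigenspace is, up to sign, the derivative data $\dot\lambda_j(t)$ (via the Feynman–Hellmann-type identity $\langle\dot{\mathcal{A}}_t u,u\rangle_H=\dot\lambda_j(t)$ for a normalised eigenvector); the crossing is regular precisely when $-\delta$ is a regular value, in the Sard sense, of the relevant eigenvalue functions. Applying Sard's theorem to the finitely many $C^1$ (indeed the $\lambda_j$ are as smooth as $\mathcal{A}$, and $C^1$ suffices) functions $\lambda_j:[t_0-\eta,t_0+\eta]\to\mathbb{R}$ shows that for almost every $\delta$ in a small interval around $0$, $-\delta$ is a common regular value, so all crossings of $\mathcal{A}+\delta I_H$ in that $t$-neighbourhood are regular. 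Finally I would cover $I$ by finitely many such $t$-neighbourhoods, take the intersection of the full-measure $\delta$-sets and of the finitely many $\varepsilon$'s, and conclude.

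The main obstacle I anticipate is the eigenvalue-parametrisation step: because the embedding $W\hookrightarrow H$ is not assumed compact, one cannot directly invoke the classical Rellich/Kato theory for the whole operator. The resolution, as the surrounding text signals, is exactly the localisation already used in proving Lemma \ref{0isolated} — away from $0$ one has no control, but near $0$ the spectral projection $P_{[-a,a]}$ has finite rank and depends continuously (indeed smoothly, by Appendix A) on the parameter, so the problem genuinely reduces to a finite-dimensional family of symmetric matrices to which Rellich's theorem and Sard's theorem apply. Care is needed to ensure the constant $a$ and the rank can be chosen locally uniformly in $t$ and stable under the $\delta$-perturbation, but this is precisely what Corollary \ref{lem:sflneighbourhood} provides once $\varepsilon$ is taken small enough.
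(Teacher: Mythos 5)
Your proposal is correct and follows essentially the same strategy as the paper's proof: establish (i) by openness of the Fredholm set plus compactness of $I$, and establish (ii) by localising near zero in the spectrum via finite-rank spectral projections, parametrising the eigenvalues by $C^1$ functions $\lambda_j(t)$ (this is Lemma~\ref{lemma:crossingform}, the Kato selection theorem recast for $\mathcal{FS}(W,H)$, rather than Appendix~A as you cite), covering $I$ by finitely many such local pieces, and applying Sard's theorem to identify the full-measure set of $\delta$ for which all crossings are regular. The only cosmetic difference is that the paper organises the covering along the partition already used to define the spectral flow, while you cover $I$ directly by neighbourhoods and intersect the resulting full-measure $\delta$-sets, but these are equivalent.
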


The second theorem shows that the spectral flow of $\mathcal{A}$ can be easily computed if all crossings are regular. We refer to Appendix A for the used notations.

\begin{theorem}\label{thm:crossing}
If $\mathcal{A}$ has only regular crossings, then they are finite in number and

\begin{align}\label{sflcross}
\sfl(\mathcal{A})=-m^-(\Gamma(\mathcal{A},0))+\sum_{t\in(0,1)}{\sgn\Gamma(\mathcal{A},t)}+m^+(\Gamma(\mathcal{A},1)).
\end{align}
\end{theorem}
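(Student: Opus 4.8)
The plan is to follow the classical reduction strategy of Robbin--Salamon \cite{Robbin-Salamon} and Fitzpatrick--Pejsachowicz--Recht \cite{Specflow}, but carried out locally near the eigenvalue $0$ so as to avoid any use of discreteness of the full spectrum. First I would establish that if all crossings of $\mathcal{A}$ are regular, then they are isolated, hence (by compactness of $I$) finite in number. The argument: at a crossing $t_0$, use the splitting $H=\ker\mathcal{A}_{t_0}\oplus\im\mathcal{A}_{t_0}$ and the corresponding splitting $W=\ker\mathcal{A}_{t_0}\oplus W'$ from the proof of Lemma \ref{0isolated}. By a Lyapunov--Schmidt type reduction, for $t$ near $t_0$ the kernel of $\mathcal{A}_t$ is in bijection with the kernel of a finite-dimensional family $\mathcal{B}_t:\ker\mathcal{A}_{t_0}\to\ker\mathcal{A}_{t_0}$ with $\mathcal{B}_{t_0}=0$ and $\dot{\mathcal{B}}_{t_0}$ equal (up to the identification) to the operator representing the crossing form $\Gamma(\mathcal{A},t_0)$; non-degeneracy of the crossing form then forces $\mathcal{B}_t$ to be invertible for $0<|t-t_0|$ small, so $t_0$ is an isolated crossing.

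Next I would compute the local contribution of a single regular crossing $t_0$ to the spectral flow. Choose $a>0$ with $[-a,a]\cap\sigma(\mathcal{A}_{t_0})=\{0\}$ and a connected neighbourhood $N$ as in Corollary \ref{lem:sflneighbourhood}, so that on a small interval $[t_0-\eta,t_0+\eta]$ the finite-rank spectral projections $P_{[-a,a]}(\mathcal{A}_t)$ vary continuously with constant rank $d=\dim\ker\mathcal{A}_{t_0}$. Restricting each $\mathcal{A}_t$ to the finite-dimensional space $E_{[-a,a]}(\mathcal{A}_t)=\im P_{[-a,a]}(\mathcal{A}_t)$ produces, after trivialising the bundle of these images over $[t_0-\eta,t_0+\eta]$ (using that $\|P_{[-a,a]}(\mathcal{A}_t)-P_{[-a,a]}(\mathcal{A}_{t_0})\|<1$), a $C^1$ path of symmetric $d\times d$ matrices $M_t$ with $M_{t_0}$ having kernel of dimension $d$, i.e. $M_{t_0}=0$, and $\dot M_{t_0}$ conjugate to the crossing form. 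Here one must be careful that the trivialisation is differentiable and that differentiating $P_{[-a,a]}(\mathcal{A}_t)\mathcal{A}_t P_{[-a,a]}(\mathcal{A}_t)$ at $t_0$ and restricting to $\ker\mathcal{A}_{t_0}$ indeed recovers $\langle\dot{\mathcal{A}}_{t_0}u,u\rangle_H$; the cross terms drop because $\mathcal{A}_{t_0}$ annihilates its kernel. For such a path of symmetric matrices vanishing at $t_0$ with invertible derivative, the definition \eqref{IndPre-align-specflow} gives $\dim E_{[0,a]}(M_{t_0+\eta})-\dim E_{[0,a]}(M_{t_0-\eta})=\sgn\Gamma(\mathcal{A},t_0)$, since for small $\eta$ the eigenvalues of $M_t$ emanating from $0$ move linearly with slopes the eigenvalues of $\dot M_{t_0}$: $m^+(\Gamma)$ of them become positive and $m^-(\Gamma)$ negative as $t$ increases through $t_0$, so the net count of eigenvalues crossing upward is $m^+(\Gamma)-m^-(\Gamma)=\sgn\Gamma(\mathcal{A},t_0)$.

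Finally I would assemble the global formula. On any closed subinterval of $I$ containing no crossing in its interior, $\mathcal{A}_t$ has $0$ in its resolvent set at interior points, and by Lemma \ref{IndPre-lemma-sflbasicprop}(iii) together with additivity under concatenation (Lemma \ref{IndPre-lemma-sflbasicprop}(i)) the spectral flow over such a subinterval vanishes; hence $\sfl(\mathcal{A})$ equals the sum over all crossings of the local contributions just computed. For an interior crossing $t_0\in(0,1)$ the contribution is $\sgn\Gamma(\mathcal{A},t_0)$ by the computation above. For the endpoint $t_0=0$, only the eigenvalues that are negative for $t$ slightly larger than $0$ have already been counted as "passing through $0$ downward before the path starts", so the contribution is $-m^-(\Gamma(\mathcal{A},0))$; symmetrically, at $t_0=1$ only the eigenvalues that have become positive by $t=1$ contribute, giving $+m^+(\Gamma(\mathcal{A},1))$ — this is exactly the half-open-interval convention built into \eqref{IndPre-align-specflow}. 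Summing yields \eqref{sflcross}. I expect the main obstacle to be the second step: making the finite-dimensional reduction rigorous in the non-compact-embedding setting, in particular verifying that the restricted path $t\mapsto M_t$ is genuinely $C^1$ and that its derivative at $t_0$, after the choice of trivialisation, represents the crossing form on the nose rather than merely up to a congruence depending on $t$ — the choice of a frame for $\im P_{[-a,a]}(\mathcal{A}_t)$ that is simultaneously differentiable and orthonormal at $t_0$ requires care, and one must check that the congruence at $t_0$ is by the identity so that signatures are preserved.
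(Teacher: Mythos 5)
Your proposal is correct and follows essentially the same route as the paper's proof: a finite-dimensional compression of $\mathcal{A}_t$ onto $\im P_{[-a,a]}(\mathcal{A}_t)$ near each crossing (the paper's $C^1$ frames from the spectral projections together with Kato's selection theorem play the role of your trivialisation and first-order perturbation of $M_t$), the observation that the cross terms vanish on $\ker\mathcal{A}_{t_0}$ so that the derivative of the reduced family represents the crossing form, sign-counting of the emanating eigenvalue branches giving $\dim E_{[0,a]}(\mathcal{A}_{t_0+\eta})-\dim E_{[0,a]}(\mathcal{A}_{t_0-\eta})=\sgn\Gamma(\mathcal{A},t_0)$ (and the analogous endpoint counts $-m^-$, $+m^+$), and assembly via concatenation and the definition \eqref{IndPre-align-specflow}; your Lyapunov--Schmidt argument for isolation of regular crossings is a harmless variant of the paper's use of the eigenvalue branches. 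Two small points: congruence already preserves signatures, so your worry that the trivialisation be the identity at $t_0$ is unnecessary; and in the assembly step the complementary subintervals must be free of crossings including their endpoints (which your construction automatically provides, since $t_0\pm\eta$ are non-crossings), because Lemma \ref{IndPre-lemma-sflbasicprop} iii) does not apply to a subinterval whose endpoint is a crossing and such a piece can carry nonzero spectral flow.
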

It is worth to point out that these theorems provide the following method for computing the spectral flow of a general differentiable path $\mathcal{A}$ having invertible endpoints. Since $GL(W,H)$ is open, there exists $\delta_1>0$ such that $\mathcal{A}_0+\delta I_H$ and $\mathcal{A}_1+\delta I_H$ are invertible for all $0\leq\delta<\delta_1$. If we assume that $\delta_1$ is less than $\varepsilon$ in Theorem \ref{thm:Sard}, we conclude by the homotopy invariance of the spectral flow, that $\mathcal{A}^\delta:=\mathcal{A}+\delta I_H$ and $\mathcal{A}$ have the same spectral flow for all these $\delta$. By Theorem \ref{thm:Sard} there exists $0\leq \delta<\delta_1$ such that $\mathcal{A}^\delta$ has only regular crossings, and so we can use \eqref{sflcross} for computing the spectral flow of the original path $\mathcal{A}$. Note that $m^+(\Gamma(\mathcal{A}^\delta,0))$ and $m^-(\Gamma(\mathcal{A}^\delta,1))$ vanish in this case.\\
We now begin our proof of Theorem \ref{thm:Sard}. In what follows we denote by $M_1\sim M_2$ similarity of matrices $M_1,M_2$. Moreover, for quadratic forms $q_1,q_2$ defined on finite dimensional Hilbert spaces, we write $q_1\sim q_2$ if the representing matrices are similar (cf. Appendix A). As in \cite{Robbin-Salamon}, our argument is based on the following version of Kato's selection theorem (cf. \cite[Thm. II.5.4\& Thm. II.6.8]{Kato}). 

\begin{theorem}\label{thm:Kato}
Let $m\in\mathbb{N}$ and let $A:I\rightarrow M(m,\mathbb{R})$ be a $C^1$-path of selfadjoint matrices. Then there exists a $C^1$-path $\Delta$ of diagonal matrices

\[\Delta(t)=\diag(\lambda_1(t),\ldots,\lambda_m(t)),\quad t\in I,\]
such that $A_t\sim\Delta_t$, $t\in I$. Moreover,

\[\Gamma(\Delta-\delta,t)\sim\Gamma(A-\delta,t)\]
for all $t\in I$, $\delta\in\mathbb{R}$.
\end{theorem}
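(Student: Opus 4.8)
The plan is to prove Theorem~\ref{thm:Kato} by invoking Kato's analytic/$C^1$ perturbation theory directly and then extracting the statement about crossing forms by a bookkeeping argument. First I would apply the cited results \cite[Thm.~II.5.4 \& Thm.~II.6.8]{Kato}: for a $C^1$-path $A:I\to M(m,\mathbb{R})$ of selfadjoint (hence symmetric) matrices one obtains $C^1$ functions $\lambda_1(t),\dots,\lambda_m(t)$ giving the repeated eigenvalues of $A_t$, together with a $C^1$ choice of an orthonormal eigenbasis, i.e. a $C^1$-path $U:I\to O(m)$ with $U(t)^\ast A_t U(t)=\Delta_t=\diag(\lambda_1(t),\dots,\lambda_m(t))$. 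This immediately gives $A_t\sim\Delta_t$ for every $t\in I$ via the orthogonal conjugation $U(t)$; here one should note the mild subtlety that Kato's general theorem is stated for self-adjoint families on a complex Hilbert space, so I would remark that the real symmetric case follows either by complexifying and checking that the spectral projections are real, or by citing the real-analytic version of the selection theorem, and that on a compact interval $C^1$ suffices because the only accumulation of eigenvalue branches to worry about is at crossings, which are handled by the local form of the theorem.

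Next I would establish the crossing-form statement. Fix $t\in I$ and $\delta\in\mathbb{R}$. Set $B_t:=A_t-\delta I_m$ and $D_t:=\Delta_t-\delta I_m$, so $D_t=U(t)^\ast B_t U(t)$ and $\dot D_t=\dot U(t)^\ast B_t U(t)+U(t)^\ast\dot B_t U(t)+U(t)^\ast B_t\dot U(t)$. Restricting the quadratic form $v\mapsto\langle\dot D_t v,v\rangle$ to $\ker D_t$: for $v\in\ker D_t$ one has $U(t)v\in\ker B_t$, and the two boundary terms involving $B_t$ vanish because $B_t U(t)v=0$ and, taking adjoints, $v^\ast U(t)^\ast B_t=0$. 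Hence for $v\in\ker D_t$,
\[
\langle\dot D_t v,v\rangle=\langle U(t)^\ast\dot B_t U(t)v,v\rangle=\langle\dot B_t\,(U(t)v),U(t)v\rangle,
\]
and since $\dot B_t=\dot A_t=\dot{(A-\delta)}_t$, the linear isomorphism $U(t):\ker D_t\to\ker B_t$ intertwines $\Gamma(\Delta-\delta,t)$ with $\Gamma(A-\delta,t)$. Because $U(t)$ is orthogonal, the representing matrices of the two forms in orthonormal bases are actually orthogonally conjugate, in particular similar, which is precisely $\Gamma(\Delta-\delta,t)\sim\Gamma(A-\delta,t)$ in the sense of Appendix~A.

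The main obstacle, and the only place where real care is needed, is the passage from the abstract Kato selection theorem to a genuinely $C^1$ simultaneous choice of eigenvalues \emph{and} eigenvectors across crossing points: away from crossings everything is smooth by the implicit function theorem, but at a crossing the eigenprojections are only continuous in general, so one must use exactly the $C^1$ (or analytic) reduction-process statement from \cite{Kato} rather than a naive argument, and one must make sure the chosen ordering of the $\lambda_i$ is globally $C^1$ rather than merely continuous. Once that is granted, the rest is the elementary conjugation computation above. I would therefore structure the write-up as: (1) quote Kato to get $U\in C^1(I,O(m))$ diagonalizing $A$; (2) read off $A_t\sim\Delta_t$; (3) differentiate the conjugation relation, restrict to the kernel, kill the two extra terms, and conclude $\Gamma(\Delta-\delta,t)\sim\Gamma(A-\delta,t)$ for all $t$ and $\delta$.
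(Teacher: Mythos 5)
Your proposal has a genuine gap at its very first and essential step. You propose to obtain from Kato a $C^1$-path $U:I\rightarrow O(m)$ with $U(t)^\ast A_tU(t)=\Delta_t$, i.e.\ a simultaneously differentiable choice of eigenvalues \emph{and} eigenvectors, and then to derive both assertions by differentiating this conjugation relation. But no such $U$ exists in general for families that are merely $C^1$ (or even $C^\infty$): Kato's example in \cite[\S II.5.3]{Kato} -- which the paper cites immediately after the statement of Theorem \ref{thm:Kato}, precisely to warn that the theorem does \emph{not} assert the existence of even a continuous family $Q_t$ with $A_t=Q_t^{-1}\Delta_tQ_t$ -- exhibits a smooth $2\times 2$ symmetric family whose eigenprojections cannot be chosen continuously through the degenerate instant. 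A globally differentiable eigenbasis is available only for real-analytic one-parameter families (Rellich), and the hypothesis here is just $C^1$, so your fallback remark about ``citing the real-analytic version'' does not apply. Since the whole of steps (2) and (3) in your outline rest on differentiating $U$, the argument collapses; the computation killing the two boundary terms is fine \emph{if} a $C^1$ conjugating frame existed, but it does not.

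The statement actually needs much less, and that is how it follows from \cite[Thm.~II.5.4 \& Thm.~II.6.8]{Kato}: those results give only the $C^1$ selection $\lambda_1(t),\ldots,\lambda_m(t)$ of the (repeated) eigenvalues, so $A_t\sim\Delta_t$ is the pointwise assertion that two symmetric matrices with the same eigenvalues, counted with multiplicity, are (orthogonally) similar -- no continuity in $t$ of the conjugation is claimed or needed. For the crossing forms, fix $t_0$ and $\delta$ and let $P$ be the orthogonal projection onto $\ker(A_{t_0}-\delta)$; Kato's reduction process at the single instant $t_0$ shows that the derivatives $\dot\lambda_j(t_0)$ of exactly those branches with $\lambda_j(t_0)=\delta$ are the eigenvalues of $P\dot A_{t_0}P$ restricted to $\ker(A_{t_0}-\delta)$, i.e.\ of the representing operator of $\Gamma(A-\delta,t_0)$. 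On the diagonal side, $\Gamma(\Delta-\delta,t_0)$ is represented by $\diag\bigl(\dot\lambda_j(t_0)\bigr)$ over the same set of indices $j$. Hence the two crossing forms are represented by symmetric matrices with identical spectra and are therefore similar, which is the claim; this is also exactly the fact the paper records right after Lemma \ref{lemma:crossingform}, that the $\dot\lambda_j(t)$ with $\lambda_j(t)=\delta$ are the eigenvalues of the crossing form. If you rewrite your argument along these lines -- pointwise spectral projection and the reduction process instead of a global $C^1$ frame -- the proof is correct and matches the intended use of Kato's theorem.
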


Note that the numbers $\lambda_j(t)$, $j=1,\ldots,m$, are the eigenvalues of the matrix $A_t$. However, let us point out that Theorem \ref{thm:Kato} does not assert that there is a continuous family of invertible matrices $Q$ such that $A_t=Q^{-1}_t\Delta_t Q_t$ for all $t\in I$, which is not true in general (cf. \cite[\S II.5.3]{Kato}).

\begin{lemma}\label{lemma:eigenvectors}
Let $t_0\in(0,1)$ and $a>0$ such that $\pm a\notin\sigma(\mathcal{A}_{t_0})$ and $\sigma(\mathcal{A}_{t_0})\cap[-a,a]$ consists only of eigenvalues of finite multiplicity. Then there exists $\varepsilon>0$ and $C^1$-functions 

\[\xi_1,\ldots,\xi_m:(t_0-\varepsilon,t_0+\varepsilon)\rightarrow W\]
such that $\{\xi_1(t),\ldots,\xi_m(t)\}$ is a basis of $E_{[-a,a]}(\mathcal{A}_t)$ for all $t\in(t_0-\varepsilon,t_0+\varepsilon)$.
\end{lemma}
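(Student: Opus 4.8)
The plan is to combine Kato's selection theorem (Theorem \ref{thm:Kato}) with the spectral projection $P_{[-a,a]}$ to produce a continuously differentiable basis of the finite-dimensional eigenspaces $E_{[-a,a]}(\mathcal{A}_t)$. First I would choose $\varepsilon>0$ small enough that $\pm a\notin\sigma(\mathcal{A}_t)$ for all $t\in(t_0-\varepsilon,t_0+\varepsilon)$ (possible since $\Omega_{[-a,a]}$ is open by Lemma \ref{lemma-contproj}), that this interval is contained in $(0,1)$, and that the rank of $P_t:=P_{[-a,a]}(\mathcal{A}_t)$ is constant, say $m$, on the interval (Corollary \ref{lem:sflneighbourhood}). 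By Lemma \ref{lemma-contproj} the path $t\mapsto P_t\in\mathcal{L}(H)$ is continuous; in fact, since $\mathcal{A}$ is $C^1$ into $\mathcal{L}(W,H)$ and the resolvent $(\lambda-\mathcal{A}_t^{\mathbb C})^{-1}$ depends $C^1$-differentiably on $t$ (being the composition of the $C^1$ map $t\mapsto\lambda-\mathcal{A}_t^{\mathbb C}\in GL(W,H)$ with inversion, which is smooth), the Riesz integral formula \eqref{specproj} shows that $t\mapsto P_t$ is a $C^1$-path in $\mathcal{L}(H)$, and moreover $P_t$ maps $H$ into $W$ with $t\mapsto P_t\in\mathcal{L}(H,W)$ also $C^1$.

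Next I would pick any basis $v_1,\ldots,v_m$ of $E_{[-a,a]}(\mathcal{A}_{t_0})=\im P_{t_0}\subset W$ and set $\eta_j(t):=P_t v_j\in W$, which is a $C^1$-curve in $W$ with $\eta_j(t_0)=v_j$. For $t$ close enough to $t_0$ the vectors $\eta_1(t),\ldots,\eta_m(t)$ remain linearly independent (their Gram determinant is continuous and nonzero at $t_0$), hence, after shrinking $\varepsilon$ once more, $\{\eta_1(t),\ldots,\eta_m(t)\}$ is a basis of $\im P_t=E_{[-a,a]}(\mathcal{A}_t)$ for every $t\in(t_0-\varepsilon,t_0+\varepsilon)$; here I use \eqref{projdim}, which identifies $\im P_{[-a,a]}(\mathcal{A}_t)$ with $E_{[-a,a]}(\mathcal{A}_t)$ since $\sigma(\mathcal{A}_t)\cap[-a,a]$ consists of finitely many eigenvalues of finite multiplicity. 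This already gives the $C^1$-functions $\xi_j:=\eta_j$ with values in $W$ as claimed. Note that the invocation of Theorem \ref{thm:Kato} is not strictly needed for this particular lemma — the projection argument suffices — but if one prefers a route closer to \cite{Robbin-Salamon} one can instead apply Kato's theorem to the $C^1$-path of selfadjoint $m\times m$ matrices representing $\mathcal{A}_t|_{\im P_t}$ in a fixed frame, and transport a smooth eigenbasis back.

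The main obstacle is the differentiability of $t\mapsto P_t$ as a map into $\mathcal{L}(H,W)$ (not just $\mathcal{L}(H)$): one must check that the contour integral \eqref{specproj}, a priori defining a bounded operator on $H$, in fact lands in $W$ and depends $C^1$-differentiably there. This follows because $(\lambda-\mathcal{A}_t^{\mathbb C})^{-1}$ is a bounded operator $H\to W$ (as $\mathcal{A}_t\in\mathcal{L}(W,H)$ and $\lambda-\mathcal{A}_t\in GL(W,H)$ for $\lambda\in\Gamma$), the family $(\lambda,t)\mapsto(\lambda-\mathcal{A}_t^{\mathbb C})^{-1}\in\mathcal{L}(H^{\mathbb C},W^{\mathbb C})$ is jointly continuous and $C^1$ in $t$ by the analogue of \eqref{rescont} (with the same compactness-of-$\Gamma$ argument as in Lemma \ref{lemma-contproj}), and one may then differentiate under the integral sign. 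Everything else — linear independence near $t_0$, the passage from $\im P_t$ to $E_{[-a,a]}(\mathcal{A}_t)$, and shrinking $\varepsilon$ — is routine. A minor point to record is that $t_0\in(0,1)$ guarantees a two-sided neighbourhood, but the argument is insensitive to this and works verbatim for one-sided neighbourhoods at the endpoints as well.
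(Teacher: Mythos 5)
Your proof is correct and follows essentially the same route as the paper's: establish $C^1$-dependence of the spectral projection $P_{[-a,a]}(\mathcal{A}_t)$ as a map into $\mathcal{L}(H,W)$ by differentiating the Riesz integral, then push a fixed basis of $\im P_{[-a,a]}(\mathcal{A}_{t_0})$ forward via $P_{[-a,a]}(\mathcal{A}_t)$. The only cosmetic difference is in how you certify that the images remain a basis: you invoke continuity of the Gram determinant, whereas the paper uses the standard operator $B_t=(I_H-P_{[-a,a]}(\mathcal{A}_{t_0}))+P_{[-a,a]}(\mathcal{A}_t)$, invertible near $t_0$, to show $P_{[-a,a]}(\mathcal{A}_t)$ maps $\im P_{[-a,a]}(\mathcal{A}_{t_0})$ bijectively onto $\im P_{[-a,a]}(\mathcal{A}_t)$; both are correct and equivalent in substance. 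You are also right that Theorem \ref{thm:Kato} is not needed here, and the paper does not use it in this lemma.
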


\begin{proof} 
We first recall that by Corollary \ref{lem:sflneighbourhood}, there is $\varepsilon'>0$ such that $\pm a\notin\sigma(\mathcal{A}_{t})$ and $\sigma(\mathcal{A}_{t})\cap[-a,a]$ consists only of eigenvalues of finite multiplicity for all $t\in(t_0-\varepsilon',t_0+\varepsilon')$. In particular, $P_{[-a,a]}(\mathcal{A}_t)$ is defined for all $t\in(t_0-\varepsilon',t_0+\varepsilon')$, and we note that $P_{[-a,a]}(\mathcal{A}_t)$ defines a path in $\mathcal{L}(H,W)$ which actually depends $C^1$ on $t$. Indeed, if $\mathcal{O}\subset\mathbb{C}$ is an open set such that $\mathcal{O}\cap\sigma(\mathcal{A}_t)=\emptyset$ for all $t\in(t_0-\varepsilon',t_0+\varepsilon')$, then the map

\[\mathcal{O}\times(t_0-\varepsilon',t_0+\varepsilon')\ni(\lambda,t)\mapsto(\lambda-\mathcal{A}_t)^{-1}\in\mathcal{L}(H,W)\]
is $C^1$, and now differentiation under the integral sign shows the assertion (cf. \cite[Thm. II.5.4]{Kato}).\\
The functions $\xi_1,\ldots,\xi_m:(t_0-\varepsilon,t_0+\varepsilon)\rightarrow W$ can now be constructed as follows: the operator

\[B_t:=(I_H-P_{[-a,a]}(\mathcal{A}_{t_0}))+P_{[-a,a]}(\mathcal{A}_t)\in\mathcal{L}(H)\]
is bijective for $t=t_0$ and because $GL(H)\subset\mathcal{L}(H)$ is open, there exists $0<\varepsilon<\varepsilon'$ such that $B_t$ is an isomorphism for all $t\in(t_0-\varepsilon,t_0+\varepsilon)$. Consequently, since $B_t(\im(P_{[-a,a]}(\mathcal{A}_{t_0})))\subset \im(P_{[-a,a]}(\mathcal{A}_{t}))$ and both spaces are of the same dimension, we conclude that $P_{[-a,a]}(\mathcal{A}_t)$ maps  $\im(P_{[-a,a]}(\mathcal{A}_{t_0}))$ bijectively onto $\im(P_{[-a,a]}(\mathcal{A}_{t}))$ for $t\in(t_0-\varepsilon,t_0+\varepsilon)$. Now we take a basis $\xi^0,\ldots,\xi^m$ of $E_{[-a,a]}(\mathcal{A}_{t_0})$ and define $\xi_j(t)=P_{[-a,a]}(\mathcal{A}_t)\xi^j$ for $t\in(t_0-\varepsilon,t_0+\varepsilon)$ and $j=1,\ldots, m$. 
\end{proof}

The following lemma recasts Kato's Selection Theorem \ref{thm:Kato} for operators in $\mathcal{FS}(W,H)$ and is essential for proving Theorem \ref{thm:Sard} and Theorem \ref{thm:crossing}.

\begin{lemma}\label{lemma:crossingform}
Let $t_0\in(0,1)$ and $a>0$ such that $\pm a\notin\sigma(\mathcal{A}_{t_0})$ and $\sigma(\mathcal{A}_{t_0})\cap[-a,a]$ consists only of eigenvalues of finite multiplicity. Then there exist $\varepsilon>0$ and a $C^1$-function $\Delta(t)$ of diagonal matrices such that

\[\Gamma(\Delta-\delta,t)\sim\Gamma(\mathcal{A}-\delta,t)\]
for $t_0-\varepsilon<t<t_0+\varepsilon$ and $-a<\delta<a$. 
\end{lemma}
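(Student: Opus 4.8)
The plan is to combine Lemma~\ref{lemma:eigenvectors} with Kato's Selection Theorem~\ref{thm:Kato} by passing from the infinite-dimensional operators $\mathcal{A}_t$ to a finite-dimensional family of symmetric matrices obtained by restricting to the spectral subspaces $E_{[-a,a]}(\mathcal{A}_t)$. First I would invoke Lemma~\ref{lemma:eigenvectors} to obtain $\varepsilon'>0$ and $C^1$-functions $\xi_1,\ldots,\xi_m:(t_0-\varepsilon',t_0+\varepsilon')\to W$ whose values form a basis of $E_{[-a,a]}(\mathcal{A}_t)$ at every $t$. Since $\mathcal{A}_t\big(E_{[-a,a]}(\mathcal{A}_t)\big)\subseteq E_{[-a,a]}(\mathcal{A}_t)$ by \eqref{projdim}, the restriction of $\mathcal{A}_t$ to this subspace is represented, in the basis $\{\xi_j(t)\}$, by a matrix $A_t\in M(m,\mathbb{R})$ with entries $\langle\mathcal{A}_t\xi_j(t),\xi_k(t)\rangle_H$ up to the Gram matrix $G_t=(\langle\xi_j(t),\xi_k(t)\rangle_H)_{j,k}$; working instead with an orthonormalisation (via $G_t^{-1/2}$, which is $C^1$ in $t$ since $G_t$ is $C^1$ and positive definite) one gets a genuinely $C^1$ path $t\mapsto A_t$ of symmetric matrices whose eigenvalues are exactly the eigenvalues of $\mathcal{A}_t$ in $[-a,a]$.

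Next I would apply Theorem~\ref{thm:Kato} to this path $A$, shrinking the interval to some $\varepsilon\in(0,\varepsilon')$ (in fact Kato's theorem is stated on all of $I$, but here we only need it on the subinterval, so a rescaling argument applies), obtaining a $C^1$ path of diagonal matrices $\Delta(t)=\diag(\lambda_1(t),\ldots,\lambda_m(t))$ with $A_t\sim\Delta_t$ and $\Gamma(\Delta-\delta,t)\sim\Gamma(A-\delta,t)$ for all $t$ and all $\delta\in\mathbb{R}$. It then remains to identify the crossing form $\Gamma(A-\delta,t)$ of the finite-dimensional path with the crossing form $\Gamma(\mathcal{A}-\delta,t)$ of the operator path, up to similarity. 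The point is that for $-a<\delta<a$ a crossing $t$ of $\mathcal{A}-\delta$ (i.e.\ $\ker(\mathcal{A}_t-\delta)\neq 0$) forces $\delta$ to be one of the eigenvalues $\lambda_j(t)\in(-a,a)$, so $\ker(\mathcal{A}_t-\delta)\subseteq E_{[-a,a]}(\mathcal{A}_t)$ and the crossing form of $\mathcal{A}-\delta$ is computed entirely inside the finite-dimensional invariant subspace. A short computation — differentiating $\langle(\mathcal{A}_t-\delta)\xi(t),\xi(t)\rangle_H$ and using that the off-block contributions drop out on the kernel, exactly the standard argument behind the crossing form — shows that the quadratic form $u\mapsto\langle\dot{\mathcal{A}}_t u,u\rangle_H$ on $\ker(\mathcal{A}_t-\delta)$ is represented, in a basis adapted to the decomposition induced by $\{\xi_j(t)\}$, by the same matrix (up to the fixed change of basis $G_t^{-1/2}$, hence up to similarity) as $\Gamma(A-\delta,t)$. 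Chaining the two similarities $\Gamma(\mathcal{A}-\delta,t)\sim\Gamma(A-\delta,t)\sim\Gamma(\Delta-\delta,t)$ yields the claim.

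\textbf{Main obstacle.} The delicate point is the passage from the operator $\mathcal{A}_t$ to the symmetric \emph{matrix} $A_t$ in a way that is simultaneously $C^1$ in $t$ and respects the crossing forms. The basis $\{\xi_j(t)\}$ produced by Lemma~\ref{lemma:eigenvectors} is not orthonormal, so one must handle the $t$-dependent Gram matrix; one needs that $t\mapsto G_t$ is $C^1$ (immediate from $\xi_j\in C^1((t_0-\varepsilon',t_0+\varepsilon'),W)$ and continuity of $W\hookrightarrow H$) and positive definite, so that $G_t^{-1/2}$ is a $C^1$ family of invertible matrices; conjugating by it turns $A_t$ into a $C^1$ symmetric matrix path without changing eigenvalues and changes the crossing forms only by a fixed (at each $t$) congruence, which is enough since we only track similarity classes. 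A secondary subtlety is that Theorem~\ref{thm:Kato} is phrased for paths defined on all of $I$ whereas $A$ is only defined near $t_0$; this is resolved by reparametrising $(t_0-\varepsilon,t_0+\varepsilon)$ onto $I$, noting that the statements ``$A_t\sim\Delta_t$'' and ``$\Gamma(A-\delta,t)\sim\Gamma(\Delta-\delta,t)$'' are pointwise in $t$ and invariant under such reparametrisation. Everything else — that $\delta\in(-a,a)$ being an eigenvalue at a crossing keeps the kernel inside $E_{[-a,a]}(\mathcal{A}_t)$, and the elementary identity for the crossing form — is routine.
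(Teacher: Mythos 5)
Your proposal is correct and follows essentially the same route as the paper: a finite-dimensional reduction using the $C^1$ basis of $E_{[-a,a]}(\mathcal{A}_t)$ from Lemma~\ref{lemma:eigenvectors}, orthonormalisation in $H$, application of Kato's Selection Theorem~\ref{thm:Kato} to the resulting symmetric matrix path, and the identification of crossing forms via the vanishing on the kernel of the terms involving derivatives of the basis. The only cosmetic differences are that the paper orthonormalises by Gram--Schmidt rather than by $G_t^{-1/2}$ and phrases the reduction through the maps $\pi_t$, $\pi_t^\ast$ with $B_t=\pi_t\mathcal{A}_t\pi_t^\ast$, carrying out explicitly the kernel computation you describe as the standard argument.
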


\begin{proof}
We first recall that by Corollary \ref{lem:sflneighbourhood} and Lemma \ref{lemma:eigenvectors}, there exists $\varepsilon>0$ and $C^1$-functions $\xi_1,\ldots,\xi_m:(t_0-\varepsilon,t_0+\varepsilon)\rightarrow W$ such that $\pm a\notin\sigma(\mathcal{A}_{t})$, $\sigma(\mathcal{A}_{t})\cap[-a,a]$ consists only of eigenvalues of finite multiplicity and $\{\xi_1(t),\ldots,\xi_m(t)\}$ is a basis of $E_{[-a,a]}(\mathcal{A}_t)$ for all $t_0-\varepsilon<t<t_0+\varepsilon$. Clearly, by using a Gram-Schmidt process, we may assume that these bases are orthonormal in $H$.\\
We now define linear operators $\pi_t:H\rightarrow\mathbb{R}^m$ by

\[\pi^\ast_t u=\sum^m_{j=1}{u_j\xi_j(t)}\in E_{[-a,a]}(\mathcal{A}_t)\subset H,\]
and obtain a $C^1$-path of selfadjoint matrices

\[B_t:=\pi_t\mathcal{A}_t\pi^\ast_t,\quad t\in(t_0-\varepsilon,t_0+\varepsilon),\]
to which we may apply Kato's Selection Theorem \ref{thm:Kato}. Hence the task is now to show that $\Gamma(\mathcal{A}-\delta,t)\sim\Gamma(B-\delta,t)$ for all $t_0-\varepsilon<t<t_0+\varepsilon$ and $-a<\delta<a$.\\
We first observe that $\pi_t\pi^\ast_t=I_{\mathbb{R}^n}$ which easily follows from the orthonormality of $\{\xi_1(t),\ldots,\xi_m(t)\}$. Consequently, $\pi^\ast_t$ is injective and

\begin{align}\label{equ:crossing}
B_t-\delta=\pi_t(\mathcal{A}_t-\delta)\pi^\ast_t.
\end{align}    
Since $\pi_t$ is injective on $\im((\mathcal{A}_t-\delta)\pi^\ast_t)\subset E_{[-a,a]}(\mathcal{A}_t)$, we conclude from \eqref{equ:crossing} that $\pi^\ast_t$ induces an isomorphism 

\[\pi^\ast_t:\ker(B_t-\delta)\rightarrow\ker(\mathcal{A}_t-\delta).\]
Finally, we obtain for $u\in\ker(B_t-\delta)$

\begin{align*}
\Gamma(B-\delta,t)[u]&=\langle\dot{B}_tu,u\rangle_{\mathbb{R}^n}=\langle\pi_{t}\dot{\mathcal{A}}_t\pi^\ast_{t}u,u\rangle_{\mathbb{R}^n}\\
&+\langle\dot{\pi}_t(\mathcal{A}_{t}-\delta)\pi^\ast_{t} u,u\rangle_{\mathbb{R}^n}+\langle\pi_{t}(\mathcal{A}_{t}-\delta)\dot{\pi}^\ast_tu,u\rangle_{\mathbb{R}^n}\\
&=\langle\pi_{t}\dot{\mathcal{A}}_t\pi^\ast_{t}u,u\rangle_{\mathbb{R}^n}+\langle\dot{\pi}_t(\mathcal{A}_{t}-\delta )\pi^\ast_{t} u,u\rangle_{\mathbb{R}^n}+\langle\dot{\pi}^\ast_tu,(\mathcal{A}_{t}-\delta)\pi^\ast_{t}u\rangle_{H}\\
&=\langle\pi_{t}\dot{\mathcal{A}}_t\pi^\ast_{t}u,u\rangle_{\mathbb{R}^n}=\Gamma(\mathcal{A}-\delta,t)[\pi^\ast_{t}u],
\end{align*} 
which shows that $\Gamma(\Delta-\delta,t)\sim\Gamma(\mathcal{A}-\delta,t)$.
\end{proof}

It is worth to note that Lemma \ref{lemma:crossingform} implies that the diagonal entries $\lambda_j(t)$ of $\Delta(t)$ are the eigenvalues of $\mathcal{A}_t$ between $-a$ and $a$. Moreover, the derivatives $\dot{\lambda}_j(t)$ for those $j$ with $\lambda_j(t)=\delta$ are the eigenvalues of the crossing operator $\Gamma(\mathcal{A}-\delta,t)$.\\
We are now in the position to prove Theorem \ref{thm:Sard}. We choose as in the definition of the spectral flow a partition $0=t_0<t_1<\ldots<t_n$ of $I$ and $a_1,\ldots,a_n>0$ such that $\sigma(\mathcal{A}_t)\cap[-a_i,a_i]$ consists of eigenvalues of finite multiplicity and $\pm a_i\notin\sigma(\mathcal{A}_t)$ for all $t\in[t_{i-1},t_i]$, $i=1,\ldots,n$. By Lemma \ref{lemma:crossingform} we can cover the set

\[\bigcup^n_{i=1}{\{(t,\lambda)\in[t_{i-1},t_i]\times[-a_i,a_i]:\,\lambda\in\sigma(\mathcal{A}_t)\}}\]  
by finitely many graphs of $C^1$-functions $\lambda_j$, each defined on some subinterval $[s_{j-1},s_j]$ of one of the intervals $[t_{i-1},t_i]$.\\
Since the set of Fredholm operators is open in $\mathcal{L}(W,H)$, there is $0<\tilde{\delta}<\min_{i=1,\ldots n} a_i$ such that $\mathcal{A}_t+\tilde{\delta}$ is Fredholm for all $t\in I$. By Sard's theorem, the complement of the set of common regular values of the functions $\lambda_j$ in $[-\tilde{\delta},\tilde{\delta}]$ has measure zero. Finally, we see from Lemma \ref{lemma:crossingform} that $\delta\in[-\tilde{\delta},\tilde{\delta}]$ is a common regular value of the functions $\lambda_j$ if and only if $\mathcal{A}-\delta\,I_H$ has only regular crossings, and the proof of Theorem \ref{thm:Sard} is complete.\\
It remains to prove Theorem \ref{thm:crossing}. Let $t_0\in(0,1)$ be a crossing, set $m:=\dim\ker\mathcal{A}_{t_0}$ and take $a>0$ such that $\sigma(\mathcal{A}_{t_0})\cap[-a,a]=\{0\}$. Let $\varepsilon>0$ be as in Lemma \ref{lemma:crossingform} and

\[\lambda_1,\ldots,\lambda_m:(t_0-\varepsilon,t_0+\varepsilon)\rightarrow(-a,a)\]
corresponding $C^1$-functions representing the eigenvalues of $\mathcal{A}_t$ in $(-a,a)$. Since $t_0$ is a regular crossing of $\mathcal{A}$, we deduce that $\dot{\lambda}_j(t_0)\neq 0$, $j=1,\ldots,m$, and consequently, there exists $0<\eta<\frac{\varepsilon}{2}$ such that $\lambda_j(t)\neq 0$ for all $t\in(t_0-2\eta,t_0+2\eta)$, $t\neq t_0$, and $j=1,\ldots,m$. This in particular shows that regular crossings are isolated. Moreover,

\[\sgn \lambda_j(t_0+\eta)=-\sgn\lambda_j(t_0-\eta)=\sgn\dot{\lambda}_j(t_0),\,\,j=1,\ldots,m,\]  
and hence

\[\sgn\Gamma(\mathcal{A},t_0)=\dim E_{[0,a]}(\mathcal{A}_{t_0+\eta})-\dim E_{[0,a]}(\mathcal{A}_{t_0-\eta}).\]
We leave it to the reader to check that by a similar argument

\[-m^-(\Gamma(\mathcal{A},0))=\dim E_{[0,a]}(\mathcal{A}_{\eta})-\dim E_{[0,a]}(\mathcal{A}_{0})\]
and

\[m^+(\Gamma(\mathcal{A},1))=\dim E_{[0,a]}(\mathcal{A}_{1})-\dim E_{[0,a]}(\mathcal{A}_{1-\eta}),\]
where $a$ and $\eta$ are chosen similarly as before.\\
Now \eqref{sflcross} is a direct consequence of Lemma \ref{IndPre-lemma-sflbasicprop} i) and the definition of the spectral flow \eqref{IndPre-align-specflow}.


\section{The Maslov index - a brief recapitulation}\label{sect:Maslov}
In this section we briefly recall the definition of the Maslov index for paths of Lagrangian subspaces of a symplectic space, where we follow \cite{Robbin-SalamonMAS}. In order to simplify the presentation, we only consider the symplectic space $\mathbb{R}^{2n}$ endowed with the standard scalar product $\langle \cdot,\cdot\rangle$ and the symplectic form $\omega(u,v)=\langle Ju,v\rangle$, $u,v\in\mathbb{R}^{2n}$, where $J$ denotes the matrix \eqref{J}.\\
At first, let us recall that the Grassmannian $G_n(\mathbb{R}^{2n})$ is a smooth $n^2$-dimensional manifold consisting of all $n$-dimensional subspaces of $\mathbb{R}^{2n}$. Its topology can be described by the metric

\begin{align*}
d(V,W)=\|P_V-P_W\|,
\end{align*}
where $P_V$ and $P_W$ denote the orthogonal projections in $\mathbb{R}^{2n}$ onto the subspaces $V$ and $W$, respectively.\\ 
A subspace $V\subset\mathbb{R}^{2n}$ is called Lagrangian if $\dim V=n$ and the restriction of the symplectic form $\omega$ to $V$ is trivial. The set $\Lambda(n)$ of all Lagrangian subspaces of $\mathbb{R}^{2n}$ is a $\frac{1}{2}n(n+1)$-dimensional smooth submanifold of $G_n(\mathbb{R}^{2n})$. If we fix some $V\in\Lambda(n)$, then 

\[\Sigma_k(V)=\{W\in\Lambda(n):\,\dim(V\cap W)=k\},\quad 0\leq k\leq n,\]
are connected $\frac{1}{2}k(k+1)$-codimensional submanifolds of $\Lambda(n)$ and

\[\Lambda(n)=\bigcup^n_{k=0}{\Sigma_k(V)},\quad\Sigma(V):=\bigcup^n_{k=1}{\Sigma_k(V)}=\overline{\Sigma_1(V)}.\]
The latter set is called the Maslov cycle of $V$.\\
We now consider a smooth path $\gamma:I\rightarrow\Lambda(n)$ and assume that the endpoints of $\gamma$ do not belong to $\Sigma(V)$. We say that $\lambda_0\in I$ is a crossing of $\gamma$ if $\gamma(\lambda_0)\in\Sigma(V)$, i.e. $\gamma(\lambda_0)\cap V\neq\{0\}$. If $W\in\Lambda(n)$ is transversal to $\gamma(\lambda_0)$, then there exists a continuously differentiable family of linear maps $\varphi_\lambda:\gamma(\lambda_0)\rightarrow W$, such that every element in $\gamma(\lambda)\subset\mathbb{R}^{2n}$ for $|\lambda-\lambda_0|$ sufficiently small can be uniquely written as $v+\varphi_\lambda(v)$ for some $v\in\gamma(\lambda_0)$. The \textit{crossing form} at a crossing $\lambda_0$ is the quadratic form defined by

\[\Gamma(\gamma,V,\lambda_0):\gamma(\lambda_0)\cap V\rightarrow\mathbb{R},\quad \Gamma(\gamma,V,\lambda_0)[v]=\frac{d}{d\lambda}\mid_{\lambda=\lambda_0}\omega(v,\varphi_\lambda(v)).\]
A crossing $\lambda_0$ is called \textit{regular} if $\Gamma(\gamma,V,\lambda_0)$ is non-degenerate, and heuristically, $\gamma$ has only regular crossings if and only if it is transverse to $\Sigma(V)$. Since regular crossings are isolated, the Maslov index of $\gamma$ with respect to $V$ can be defined in this case by

\begin{align}\label{Maslov}
\mu_{Mas}(\gamma,V)=\sum_{0<\lambda<1}{\sgn\Gamma(\gamma,V,\lambda).}
\end{align}
It is shown in \cite[\S 2]{Robbin-SalamonMAS}, that this definition extends to an integer valued homotopy invariant on the set of all paths in $\Lambda(n)$ having endpoints in $\Lambda(n)\setminus\Sigma(V)$.\\
For later reference, we note an important special case in which the crossing forms can be computed explicitly (cf. \cite[Rem. 5.34]{Robbin-Salamon}). Let us assume that $\Psi:I\rightarrow\Sp(2n)$ is a differentiable path of symplectic matrices, $V=\{0\}\times\mathbb{R}^n\in\Lambda(n)$ and let us denote by $\Psi_\cdot V$ the path $I\ni\lambda\mapsto\Psi_\lambda V\in\Lambda(n)$. If we write

\begin{align*}
\Psi_\lambda=\begin{pmatrix}
a_\lambda&b_\lambda\\
c_\lambda&d_\lambda
\end{pmatrix}\in\Sp(2n),\quad \lambda\in I,
\end{align*}
then

\[\Psi_\lambda(\{0\}\times\mathbb{R}^n)\cap(\{0\}\times\mathbb{R}^n)=\{(0,d_\lambda u)\in\mathbb{R}^n\times\mathbb{R}^n:\,u\in\ker b_\lambda\},\]
and in particular, $\lambda_0\in I$ is a crossing if and only if $\ker b_{\lambda_0}\neq\{0\}$. The crossing form at a crossing $\lambda_0$ turns out to be

\[\Gamma(\Psi_\cdot V,V,\lambda_0)[v]=-\langle d_{\lambda_0}u,\dot{b}_{\lambda_0}u\rangle,\]
where $v=(0,d_{\lambda_0}u)$ and $b_{\lambda_0}u=0$. We now introduce a quadratic form 

\begin{align}\label{MaslovSympMat}
q:\ker b_{\lambda_0}\rightarrow\mathbb{R},\quad q[u]=\Gamma(\Psi_\cdot V,V,\lambda_0)[(0,d_{\lambda_0}u)]=-\langle d_{\lambda_0}u,\dot{b}_{\lambda_0}u\rangle,
\end{align}	
and since 

\[d_{\lambda_0}\mid_{\ker b_{\lambda_0}}:\ker b_{\lambda_0}\rightarrow \Psi_{\lambda_0}(\{0\}\times\mathbb{R}^n)\cap(\{0\}\times\mathbb{R}^n)\]
is an isomorphism, we conclude that $\lambda_0$ is a regular crossing if and only if $q$ is non-degenerate. Moreover, the contribution of $\lambda_0$ to the Maslov index of $\Psi_\cdot V$ in \eqref{Maslov} is given by the signature of $q$.\\
Finally, let us introduce the relative Maslov index for pairs of paths $(\gamma_1,\gamma_2):I\rightarrow\Lambda(n)\times\Lambda(n)$. The space $\mathbb{R}^{2n}\times\mathbb{R}^{2n}$ is symplectic with respect to the symplectic form induced by $J\times(-J)$. Clearly, every element in $\Lambda(n)\times\Lambda(n)$ belongs to $\Lambda(2n)$. Moreover, the diagonal $\Delta\subset\mathbb{R}^{2n}\times\mathbb{R}^{2n}$ is Lagrangian, and $(\gamma_1(\lambda),\gamma_2(\lambda))\cap\Delta\neq\{0\}$ if and only if $\gamma_1(\lambda)\cap\gamma_2(\lambda)\neq\{0\}$. We define the relative Maslov index by

\[\mu_{Mas}(\gamma_1,\gamma_2)=\mu_{Mas}(\gamma_1\times\gamma_2,\Delta),\]
where we assume that $\gamma_1(0)\cap\gamma_2(0)=\gamma_1(1)\cap\gamma_2(1)=\{0\}$, and we note the following two properties:

\begin{enumerate}
	\item[i)] If $h=(h_1,h_2):I\times I\rightarrow\Lambda(n)\times\Lambda(n)$ is such that $h_1(0,\tau)\cap h_2(0,\tau)=\{0\}$ and $h_1(1,\tau)\cap h_2(1,\tau)=\{0\}$ for all $\tau\in I$, then $\mu_{Mas}(h(\cdot,0))=\mu_{Mas}(h(\cdot,1))$.
	\item[ii)] If $\Psi:I\rightarrow\Sp(2n)$ is a path of symplectic matrices, then
	
	\begin{align}\label{Maslovnatural}
	\mu_{Mas}(\Psi\,\gamma_1,\Psi\,\gamma_2)=\mu_{Mas}(\gamma_1,\gamma_2),
	\end{align} 
	where $\Psi\gamma_i:I\rightarrow\Lambda(n)$ is the path $(\Psi\gamma_i)(\lambda)=\Psi_\lambda\gamma_i(\lambda)$, $i=1,2$.
\end{enumerate}
We call $\lambda_0\in I$ a crossing if $\gamma_1(\lambda_0)\cap\gamma_2(\lambda_0)\neq \{0\}$ and we define the relative crossing form at a crossing $\lambda_0$ by

\[\Gamma(\gamma_1,\gamma_2,\lambda_0)=\Gamma(\gamma_1,\gamma_2(\lambda_0),\lambda_0)-\Gamma(\gamma_2,\gamma_1(\lambda_0),\lambda_0).\]
As before, regular crossings are isolated and if $(\gamma_1,\gamma_2)$ has only regular crossings, then 

\begin{align}\label{regMas}
\mu_{Mas}(\gamma_1,\gamma_2)=\sum_{0<\lambda<1}{\sgn\Gamma(\gamma_1,\gamma_2,\lambda)}.
\end{align}


\section{The main theorem}

\subsection{Assumptions and statement of the theorem}\label{sect:main}
Let $S:I\times\mathbb{R}\rightarrow\mathcal{L}(\mathbb{R}^{2n})$ be a smooth family of symmetric matrices such that $S_\lambda:=S(\lambda,\cdot):\mathbb{R}\rightarrow\mathcal{L}(\mathbb{R}^{2n})$ converges uniformly in $\lambda$ to families 

\[S_\lambda(\infty)=\lim_{t\rightarrow\infty}S_\lambda(t),\quad S_\lambda(-\infty)=\lim_{t\rightarrow-\infty}S_\lambda(t),\quad\lambda\in I.\]
Let us also assume that for $\dot{S}_\lambda$, the derivative with respect to $\lambda$, there is a constant $C_1>0$ such that
  
\begin{align}\label{growth}
\|\dot{S}_\lambda(t)\|<C_1,\quad (\lambda,t)\in I\times\mathbb{R}.
\end{align}
We consider the family of Hamiltonian systems \eqref{Hamiltonian} from the introduction, i.e.

\begin{equation*}
\left\{
\begin{aligned}
Ju'(t)+S_\lambda(t)u(t)&=0,\quad t\in\mathbb{R}\\
\lim_{t\rightarrow\pm\infty}u(t)&=0,
\end{aligned}
\right.
\end{equation*}
where $J$ denotes the symplectic matrix \eqref{J}. If $u:\mathbb{R}\rightarrow\mathbb{R}^{2n}$ is a solution of \eqref{Hamiltonian} for some $\lambda\in I$, then clearly $u$ is smooth. Henceforth we assume

\begin{enumerate}
	\item[A1)] The matrices $JS_\lambda(\pm\infty)$ are hyperbolic, i.e. they have no eigenvalues on the imaginary axis.
\end{enumerate}
Then there exist constants $\alpha,\beta>0$ such that

\begin{align}\label{decay}
|u(t)|\leq \alpha e^{-\beta|t|},\quad t\in\mathbb{R},
\end{align}
and consequently, $u$ and so $u'=JS_\lambda u$ are elements of $L^2(\mathbb{R},\mathbb{R}^{2n})$ (cf. \cite[Lemma 1.1]{Alberto}). Let us recall that the space $H^1(\mathbb{R},\mathbb{R}^{2n})$ of all absolutely continuous functions in $L^2(\mathbb{R},\mathbb{R}^{2n})$ having derivatives in $L^2(\mathbb{R},\mathbb{R}^{2n})$ is a Hilbert space with respect to the scalar product

\[\langle u,v\rangle_{H^1(\mathbb{R},\mathbb{R}^{2n})}=\langle u,v\rangle_{L^2(\mathbb{R},\mathbb{R}^{2n})}+\langle u',v'\rangle_{L^2(\mathbb{R},\mathbb{R}^{2n})},\quad u,v\in H^1(\mathbb{R},\mathbb{R}^{2n}).\]
In particular, every solution of \eqref{Hamiltonian} belongs to $H^1(\mathbb{R},\mathbb{R}^{2n})$, and we now define operators

\begin{align}\label{diffop}
(\mathcal{A}_\lambda u)(t):=Ju'(t)+S_\lambda(t)u(t),\quad \lambda\in I,
\end{align}
acting between the spaces

\[W=H^1(\mathbb{R},\mathbb{R}^{2n})\quad\text{and}\quad H=L^2(\mathbb{R},\mathbb{R}^{2n}).\]
We see at once that $\mathcal{A}$ is a continuously differentiable family in $\mathcal{L}(W,H)$, and moreover, it is an easy exercise to show that $\mathcal{A}_\lambda\in\mathcal{S}(W,H)$, $\lambda\in I$. It is well known (cf. \cite[Thm. 2.1]{Robbin-Salamon}) that under the assumption A 1) each $\mathcal{A}_\lambda$ is Fredholm. Finally, one additional assumption is necessary in order to obtain a path in $\mathcal{FS}(W,H)$ having invertible ends:

\begin{enumerate}
	\item[A2)] The equations \eqref{Hamiltonian} admit only the trivial solution for $\lambda=0$ and $\lambda=1$. 
\end{enumerate}
Since selfadjoint Fredholm operators have a vanishing Fredholm index, A2) holds if and only if $\mathcal{A}_0$ and $\mathcal{A}_1$ are invertible. Hence if A1) and A2) are satisfied, then $\mathcal{A}$ is a path in $\mathcal{FS}(W,H)$ having invertible ends, and so its spectral flow is defined according to Section \ref{sect:sfl}.\\
Let us now assign a second integer to the equations \eqref{Hamiltonian} by using the Maslov index from Section \ref{sect:Maslov}. We define a two parameter family of matrix-valued maps $\Psi_{(\lambda,t_0)}:\mathbb{R}\rightarrow\mathcal{L}(\mathbb{R}^{2n})$, $(\lambda,t_0)\in I\times\mathbb{R}$, by

\begin{equation*}
\left\{
\begin{aligned}
J\Psi'_{(\lambda,t_0)}(t)+S_\lambda(t)\Psi_{(\lambda,t_0)}(t)&=0,\quad t\in\mathbb{R}\\
\Psi_{(\lambda,t_0)}(t_0)&=I_{2n},
\end{aligned}
\right.
\end{equation*}
and we note that the stable and unstable subspaces of \eqref{Hamiltonian} at $t_0\in\mathbb{R}$ can be written as

\begin{align*}
E^s_\lambda(t_0)&=\{v\in\mathbb{R}^{2n}:\,\Psi_{(\lambda,t_0)}(t)v\rightarrow 0,\,\, t\rightarrow\infty\}\\
E^u_\lambda(t_0)&=\{v\in\mathbb{R}^{2n}:\,\Psi_{(\lambda,t_0)}(t)v\rightarrow 0,\,\, t\rightarrow -\infty\}.
\end{align*}
Moreover, we define

\begin{align}\label{stabinf}
\begin{split}
E^s_\lambda(\pm\infty)&:=\{u(0):\, Ju'(t)+S_\lambda(\pm\infty) u(t)=0,\, t\in\mathbb{R};\,u(t)\rightarrow 0,t\rightarrow\infty \}\\
&=\{v\in\mathbb{R}^{2n}:\,\lim_{t\rightarrow\infty} e^{tJS_\lambda(\pm\infty)}v=0\}
\end{split}
\end{align}
and

\begin{align}\label{unstabinf}
\begin{split}
E^u_\lambda(\pm\infty)&:=\{u(0):\, Ju'(t)+S_\lambda(\pm\infty) u(t)=0,\, t\in\mathbb{R};\,u(t)\rightarrow 0,t\rightarrow-\infty \}\\
&=\{v\in\mathbb{R}^{2n}:\,\lim_{t\rightarrow-\infty} e^{tJS_\lambda(\pm\infty)}v=0\}.
\end{split}
\end{align}
Among the many references about stable and unstable subspaces, we particularly want to mention the beautiful paper \cite{Alberto} which even treats the more general infinite dimensional theory. Here we will not need to know much about these spaces. Essentially, we only require that under the given assumptions, $E^s_\lambda(t)$ and $E^u_\lambda(t)$ define smooth families in the Grassmannian $G(\mathbb{R}^{2n})$, and 

\begin{align}\label{stableunstablelimits}
\lim_{t\rightarrow\infty}E^s_\lambda(t)=E^s_\lambda(+\infty),\quad \lim_{t\rightarrow-\infty}E^u_\lambda(t)=E^u_\lambda(-\infty),\quad\lambda\in I.
\end{align}
Finally, let us note for later reference the elementary fact that for each $\lambda\in I$ and $t_0\in\mathbb{R}$, the evaluation map 

\begin{align}\label{evaliso}
\ker\mathcal{A}_\lambda\rightarrow E^u_\lambda(t_0)\cap E^s_\lambda(t_0),\quad u\mapsto u(t_0)
\end{align}
is an isomorphism.

\begin{lemma}\label{stabsympl}
The spaces $E^s_\lambda(t)$, $(\lambda,t)\in I\times[0,\infty]$, and $E^u_\lambda(t)$, $(\lambda,t)\in I\times[-\infty,0]$, belong to $\Lambda(n)$.
\end{lemma}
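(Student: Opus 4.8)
The plan is to verify two things for each of the relevant subspaces: that they are isotropic with respect to $\omega$, and that they have dimension exactly $n$. Together these give that the subspaces are Lagrangian. Isotropy is already essentially explained in the introduction: if $v,w$ solve $Ju'+S_\lambda(t)u=0$ on $\mathbb{R}$ and both decay at $+\infty$ (resp. both decay at $-\infty$), then $\frac{d}{dt}\omega(v(t),w(t)) = \langle J v'(t),w(t)\rangle + \langle Jv(t),w'(t)\rangle$, which using $Jv'=-S_\lambda v$ and the symmetry of $S_\lambda$ works out to $0$, so $\omega(v(t),w(t))$ is constant in $t$; since both solutions tend to $0$ at the same end, that constant is $0$. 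Via the evaluation isomorphism \eqref{evaliso} (and its analogues for $t_0=\pm\infty$ using \eqref{stabinf}, \eqref{unstabinf}), this shows $E^s_\lambda(t)$ and $E^u_\lambda(t)$ are isotropic for all the parameter values in question, including $t=\pm\infty$.

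Next I would pin down the dimensions. For the finite-time spaces, $E^s_\lambda(t_0)=\Psi_{(\lambda,t_0)}(t_0)^{-1}\cdot(\text{stable subspace at any other time})$ and the family $\{E^s_\lambda(t)\}$ is continuous in $(\lambda,t)$ on $I\times[0,\infty]$ with limit $E^s_\lambda(+\infty)$ as $t\to\infty$ by \eqref{stableunstablelimits}; similarly $\{E^u_\lambda(t)\}$ is continuous on $I\times[-\infty,0]$ with the stated limit. Since the Grassmannian $G(\mathbb{R}^{2n})$ has connected components indexed by dimension, it suffices to compute $\dim E^s_\lambda(+\infty)$ and $\dim E^u_\lambda(-\infty)$. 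By \eqref{stabinf}, $E^s_\lambda(+\infty)=\{v:\lim_{t\to\infty}e^{tJS_\lambda(+\infty)}v=0\}$ is the sum of the generalized eigenspaces of $JS_\lambda(+\infty)$ for eigenvalues with negative real part, and by hyperbolicity (assumption A1) there are no purely imaginary eigenvalues; likewise $E^u_\lambda(-\infty)$ is the sum of generalized eigenspaces for eigenvalues with positive real part. The standard fact that $JS$ hyperbolic implies its stable and unstable subspaces each have dimension $n$ — because the spectrum of $JS$ is symmetric about the imaginary axis (if $\mu$ is an eigenvalue of $JS$ then so is $-\bar\mu$, since $(JS)^\ast = -S J = -J^{-1}(JS)J$ up to the sign coming from $J^\ast=-J$) — then gives $\dim E^s_\lambda(+\infty)=\dim E^u_\lambda(-\infty)=n$. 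By continuity and connectedness of the relevant parameter domains, $\dim E^s_\lambda(t)=n$ for all $(\lambda,t)\in I\times[0,\infty]$ and $\dim E^u_\lambda(t)=n$ for all $(\lambda,t)\in I\times[-\infty,0]$.

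The main obstacle, and the step that deserves the most care, is the spectral symmetry argument showing that a hyperbolic $JS$ with $S$ symmetric splits $\mathbb{R}^{2n}$ into equal-dimensional stable and unstable parts. One clean way is to observe that $JS$ is infinitesimally symplectic (i.e. $(JS)^\ast J + J(JS)=0$, equivalently $e^{tJS}\in\mathrm{Sp}(2n)$), which forces the characteristic polynomial to satisfy $p(\mu)=p(-\mu)$ up to sign, hence the nonzero spectrum is invariant under $\mu\mapsto -\mu$; combined with the absence of imaginary eigenvalues this pairs each eigenvalue with negative real part to one with positive real part, with matching generalized-eigenspace dimensions, so each side has dimension $n$. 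Once this is in hand, combining it with the isotropy from the first paragraph and the continuity/limit facts \eqref{stableunstablelimits} completes the proof; the remaining arguments are routine.
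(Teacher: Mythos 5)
Your proof is correct, and the isotropy argument and the use of \eqref{stableunstablelimits} together with connectedness of the dimension components of the Grassmannian match the paper's. Where you differ is in how you pin down the dimensions of $E^s_\lambda(\pm\infty)$ and $E^u_\lambda(\pm\infty)$: you invoke the fact that $JS$ is infinitesimally symplectic, so that $\sigma(JS)$ is symmetric under $\mu\mapsto-\mu$, and together with hyperbolicity this pairs the stable and unstable generalized eigenspaces and forces each to have dimension $n$. The paper instead recycles the isotropy result it has just established: since $E^s_\lambda(\pm\infty)$ and $E^u_\lambda(\pm\infty)$ are isotropic they each have dimension at most $n$, and since hyperbolicity gives $E^s_\lambda(\pm\infty)\oplus E^u_\lambda(\pm\infty)=\mathbb{R}^{2n}$, the two dimension bounds are saturated. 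Both arguments are sound; the paper's is more economical because it reuses the isotropy computation and avoids any spectral-symmetry discussion, while yours is the more self-contained ``textbook'' route that would work even if one had not already checked isotropy. Either way the rest of the proof (transport by the fundamental solution and continuity up to $t=\pm\infty$) is routine and you handle it as the paper does.
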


\begin{proof}
Let us first recall that the dimension of an isotropic subspace of $\mathbb{R}^{2n}$ is at most $n$, and that an isotropic subspace is Lagrangian if and only if its dimension is equal to $n$.\\
Now, if $v,w:\mathbb{R}\rightarrow\mathbb{R}^{2n}$ are solutions of the differential equation $Ju'+S_\lambda u=0$, then $\omega(v(t),w(t))$ is constant for all $t\in\mathbb{R}$. This clearly implies that $\omega(v(t_0),w(t_0))=0$ if either $v(t_0),w(t_0)\in E^u_\lambda(t_0)$ or $v(t_0),w(t_0)\in E^s_\lambda(t_0)$ for some $t_0\in\mathbb{R}$. Since the same argument applies to the equations $Ju'+S_\lambda(\pm\infty)u=0$, we conclude that the spaces $E^u_\lambda(t_0)$, $E^s_\lambda(t_0)$, $E^u_\lambda(\pm\infty)$ and $E^s_\lambda(\pm\infty)$ are isotropic.\\ 
Note that $E^s_\lambda(\pm\infty)$ are the generalised eigenspaces of $JS_\lambda(\pm\infty)$ with respect to eigenvalues having negative real part, and $E^u_\lambda(\pm\infty)$ are the generalised eigenspaces of $JS_\lambda(\pm\infty)$ with respect to eigenvalues having positive real part (cf. \cite[\S 12]{Amann}). Since $JS_\lambda(\pm\infty)$ are hyperbolic by A1), we conclude that $E^s_\lambda(+\infty)\oplus E^u_\lambda(+\infty)=\mathbb{R}^{2n}=E^s_\lambda(-\infty)\oplus E^u_\lambda(-\infty)$. It follows that all these spaces are of dimension $n$ and thus Lagrangian.\\
Finally, we deduce from \eqref{stableunstablelimits} that $\dim E^s_\lambda(t)=\dim E^u_\lambda(t)=n$, $t\in\mathbb{R}$, which shows that these spaces are Lagrangian as well. 
\end{proof}

In what follows we denote for $t_0\in\mathbb{R}$ by $E^u_{\cdot}(t_0)$ and $E^s_{\cdot}(t_0)$ the paths $E^u_{\lambda}(t_0),E^s_{\lambda}(t_0)$ in $\Lambda(n)$ parametrised by $\lambda\in I$. From our assumption A2), the differential equations \eqref{Hamiltonian} only have the trivial solution for $\lambda=0,1$. Accordingly, $E^u_0(t_0)\cap E^s_0(t_0)=E^u_1(t_0)\cap E^s_1(t_0)=\{0\}$ for all $t_0\in\mathbb{R}$ by \eqref{evaliso}, and so the relative Maslov index $\mu_{Mas}(E^u_{\cdot}(t_0),E^s_{\cdot}(t_0))$ is defined.

\begin{lemma}\label{Maslovconstant}
$\mu_{Mas}(E^u_{\cdot}(t_0),E^s_{\cdot}(t_0))=\mu_{Mas}(E^u_{\cdot}(0),E^s_{\cdot}(0))$ for all $t_0\in\mathbb{R}$.
\end{lemma}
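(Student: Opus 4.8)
The plan is to exhibit a homotopy of pairs of Lagrangian paths connecting $(E^u_\cdot(t_0),E^s_\cdot(t_0))$ to $(E^u_\cdot(0),E^s_\cdot(0))$, with the homotopy parameter sliding $t_0$ along $\mathbb R$, and then invoke the homotopy invariance property i) of the relative Maslov index from Section \ref{sect:Maslov}. The key observation is that the fundamental solutions give a natural way to transport the stable and unstable subspaces: for fixed $\lambda$ and any $s,t$, one has $\Psi_{(\lambda,s)}(t)=\Psi_{(\lambda,0)}(t)\,\Psi_{(\lambda,0)}(s)^{-1}$, and from the characterisations of $E^s_\lambda(t_0)$ and $E^u_\lambda(t_0)$ via decay of $\Psi_{(\lambda,t_0)}(t)v$ one reads off that $E^s_\lambda(t_0)=\Psi_{(\lambda,0)}(t_0)\,E^s_\lambda(0)$ and likewise $E^u_\lambda(t_0)=\Psi_{(\lambda,0)}(t_0)\,E^u_\lambda(0)$. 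In other words, applying the symplectic matrix $\Psi_{(\lambda,0)}(t_0)$ to the pair at $0$ produces the pair at $t_0$.

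First I would make the substitution $s\in[0,1]\mapsto \tau(s)$ a reparametrisation of the segment from $0$ to $t_0$ (say $\tau(s)=s\,t_0$), and define the homotopy
\[
h:I\times I\rightarrow\Lambda(n)\times\Lambda(n),\qquad h(\lambda,s)=\bigl(\Psi_{(\lambda,0)}(\tau(s))\,E^u_\lambda(0),\ \Psi_{(\lambda,0)}(\tau(s))\,E^s_\lambda(0)\bigr),
\]
which is continuous because $S$ is smooth and the $\Psi_{(\lambda,0)}$ depend smoothly on $\lambda$ (and the stable/unstable subspaces depend continuously on $\lambda$ by the discussion preceding Lemma \ref{stabsympl}). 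By the transport identities above, $h(\lambda,0)=(E^u_\lambda(0),E^s_\lambda(0))$ and $h(\lambda,1)=(E^u_\lambda(t_0),E^s_\lambda(t_0))$. At the endpoints $\lambda=0$ and $\lambda=1$ we have $E^u_\lambda(0)\cap E^s_\lambda(0)=\{0\}$ by A2) and \eqref{evaliso}; since $\Psi_{(\lambda,0)}(\tau(s))$ is invertible, the transversality is preserved, i.e. $h_1(0,s)\cap h_2(0,s)=\{0\}$ and $h_1(1,s)\cap h_2(1,s)=\{0\}$ for all $s\in I$. Hence property i) of the relative Maslov index applies and gives $\mu_{Mas}(h(\cdot,0))=\mu_{Mas}(h(\cdot,1))$, which is exactly the claim. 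Alternatively, one can avoid writing the homotopy explicitly and instead observe directly that $(E^u_\cdot(t_0),E^s_\cdot(t_0))=(\Psi_{(\cdot,0)}(t_0)E^u_\cdot(0),\Psi_{(\cdot,0)}(t_0)E^s_\cdot(0))$ and apply \eqref{Maslovnatural} with the path of symplectic matrices $\lambda\mapsto\Psi_{(\lambda,0)}(t_0)$ — this is even cleaner, provided one notes that the naturality property ii) requires only a path of symplectic matrices, which $\lambda\mapsto\Psi_{(\lambda,0)}(t_0)$ is.

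The main obstacle is verifying the transport identities $E^s_\lambda(t_0)=\Psi_{(\lambda,0)}(t_0)E^s_\lambda(0)$ and $E^u_\lambda(t_0)=\Psi_{(\lambda,0)}(t_0)E^u_\lambda(0)$ carefully; this is where the cocycle relation $\Psi_{(\lambda,0)}(t)=\Psi_{(\lambda,t_0)}(t)\Psi_{(\lambda,0)}(t_0)$ is used, together with the two equivalent descriptions of the stable/unstable subspaces (as initial values of decaying solutions, and as $\{v:\Psi_{(\lambda,t_0)}(t)v\to0\}$). Everything else is a routine application of the already-established properties of the relative Maslov index, so the lemma follows once these identities are in hand.
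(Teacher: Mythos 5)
Your proof is correct and follows essentially the same route as the paper: the paper defines the homotopy $h(\lambda,\tau)=(E^u_\lambda((1-\tau)t_0),E^s_\lambda((1-\tau)t_0))$ and invokes homotopy invariance i) after checking transversality at $\lambda=0,1$ via A2) and \eqref{evaliso}, and your homotopy $h(\lambda,s)=(\Psi_{(\lambda,0)}(st_0)E^u_\lambda(0),\Psi_{(\lambda,0)}(st_0)E^s_\lambda(0))$ is the same homotopy (up to reparametrisation) written through the transport identity $E^{u/s}_\lambda(t)=\Psi_{(\lambda,0)}(t)E^{u/s}_\lambda(0)$. Your alternative via naturality \eqref{Maslovnatural} with $\lambda\mapsto\Psi_{(\lambda,0)}(t_0)$ is an equally valid and arguably cleaner one-line argument, but it is not what the paper does.
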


\begin{proof}
We obtain from \eqref{evaliso} that $E^u_\lambda(t_0)\cap E^s_\lambda(t_0)=\{0\}$ for all $t_0\in\mathbb{R}$ and $\lambda=0,1$. Consequently,

\[h=(h_1,h_2):I\times I\rightarrow\Lambda(n)\times\Lambda(n),\quad h(\lambda,\tau)=(E^u_\lambda((1-\tau)\cdot t_0),E^s_\lambda((1-\tau)\cdot t_0))\]
defines a homotopy such that $h_1(0,\tau)\cap h_2(0,\tau)=\{0\}$ and $h_1(1,\tau)\cap h_2(1,\tau)=\{0\}$. The assertion follows by the homotopy invariance of the Maslov index.
\end{proof}

Finally we can state our main theorem.

\begin{theorem}\label{main}
If the assumptions A1) and A2) introduced above hold for the family of equations \eqref{Hamiltonian}, then

\[\sfl(\mathcal{A})=\mu_{Mas}(E^u_\cdot(0),E^s_\cdot(0)).\] 
\end{theorem}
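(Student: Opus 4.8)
The plan is to prove the equality by reducing both sides to a common model on a compact interval and then matching crossing forms. The key tool on the spectral-flow side is Theorem~\ref{thm:crossing}: after a generic perturbation $\mathcal{A}^\delta=\mathcal{A}+\delta I_H$ (which, by Theorem~\ref{thm:Sard} and homotopy invariance, does not change $\sfl(\mathcal{A})$), we may assume $\mathcal{A}$ has only regular crossings, hence finitely many, and each isolated; by a further homotopy we can even assume there is a single crossing at some $\lambda_0\in(0,1)$. On the Maslov side, Lemma~\ref{Maslovconstant} lets us compute $\mu_{Mas}(E^u_\cdot(t_0),E^s_\cdot(t_0))$ at any convenient $t_0$, and after the same perturbation argument we can arrange that $(E^u_\cdot(t_0),E^s_\cdot(t_0))$ has only regular crossings, each contributing $\sgn\Gamma(E^u,E^s,\lambda)$ by \eqref{regMas}. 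So the whole theorem comes down to a local statement: at a single crossing instant $\lambda_0$, the crossing form $\Gamma(\mathcal{A},\lambda_0)$ on $\ker\mathcal{A}_{\lambda_0}$ and the relative Maslov crossing form $\Gamma(E^u_\cdot(t_0),E^s_\cdot(t_0),\lambda_0)$ on $E^u_{\lambda_0}(t_0)\cap E^s_{\lambda_0}(t_0)$ agree, after identifying the two spaces via the evaluation isomorphism \eqref{evaliso}.

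To carry this out I would proceed as follows. First, fix $\lambda_0$ and express everything in terms of the fundamental solution $\Psi_{(\lambda,t_0)}$; differentiating the defining ODE $J\Psi'_{(\lambda,t_0)}+S_\lambda\Psi_{(\lambda,t_0)}=0$ with respect to $\lambda$ gives a variation-of-constants formula for $\dot\Psi_{(\lambda,t_0)}$ in terms of $\dot S_\lambda$. Second, compute the spectral-flow crossing form: for $u\in\ker\mathcal{A}_{\lambda_0}$, i.e. a homoclinic solution, $\Gamma(\mathcal{A},\lambda_0)[u]=\langle\dot{\mathcal{A}}_{\lambda_0}u,u\rangle_{L^2}=\int_{\mathbb{R}}\langle\dot S_{\lambda_0}(t)u(t),u(t)\rangle\,dt$ (the $J\partial_t$ term does not depend on $\lambda$). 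Third, compute the Maslov crossing form $\Gamma(E^u_\cdot(t_0),E^s_\cdot(t_0),\lambda_0)=\Gamma(E^u_\cdot(t_0),E^s_{\lambda_0}(t_0),\lambda_0)-\Gamma(E^s_\cdot(t_0),E^u_{\lambda_0}(t_0),\lambda_0)$: choosing a fixed Lagrangian transverse to $E^u_{\lambda_0}(t_0)\cap E^s_{\lambda_0}(t_0)$ and using the explicit description of crossing forms near the Maslov cycle, each of the two terms is a symplectic pairing $\omega(v,\frac{d}{d\lambda}|_{\lambda_0}(\text{moving Lagrangian}))$, and the moving Lagrangians $E^u_\lambda(t_0)$, $E^s_\lambda(t_0)$ are governed by the linearized ODE. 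The crucial identity is that, writing $v=u(t_0)$ for the homoclinic $u$, the half-line contributions telescope into the integral: the derivative of $E^s_\lambda(t_0)$ contributes $\int_{t_0}^{\infty}\langle\dot S_{\lambda_0}u,u\rangle\,dt$ and the derivative of $E^u_\lambda(t_0)$ contributes $-\int_{-\infty}^{t_0}\langle\dot S_{\lambda_0}u,u\rangle\,dt$, so the difference is exactly $\int_{\mathbb{R}}\langle\dot S_{\lambda_0}u,u\rangle\,dt$. This is the same mechanism as in \cite[Rem.~5.34 and related computations]{Robbin-Salamon}, now adapted to the homoclinic (whole-line, decaying) setting.

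The main obstacle I expect is exactly this last telescoping step, together with the ``unexpected term'' the introduction warns about. The subtlety is that $E^s_\lambda(t_0)$ and $E^u_\lambda(t_0)$ are defined by asymptotic decay conditions, not by fixing data at $t_0$, so to differentiate them in $\lambda$ one must track how the decaying solution depends on $\lambda$; this requires a careful lifting of the moving Lagrangians to curves of frames (the ``technical lifting lemma'' in Appendix~B) and control of the solutions uniformly near $t=\pm\infty$, which is where the exponential decay estimate \eqref{decay} and the growth bound \eqref{growth} enter. Concretely, when one writes $\frac{d}{d\lambda}|_{\lambda_0}$ of a decaying frame, one gets the particular solution of the inhomogeneous linearized equation that decays, and its value at $t_0$ differs from the ``naive'' answer by a boundary term at infinity; showing that term vanishes (because of decay) and that the remaining integral converges is the heart of the matter. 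After that, reassembling the finitely many local contributions via \eqref{sflcross} and \eqref{regMas}, and invoking homotopy invariance to remove the perturbation $\delta I_H$ and the reduction to a single crossing, gives $\sfl(\mathcal{A})=\mu_{Mas}(E^u_\cdot(0),E^s_\cdot(0))$ and completes the proof.
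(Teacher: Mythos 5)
Your outline is a valid strategy, but it is a genuinely different route from the one taken in the paper: it is essentially the method of Chen and Hu \cite{Hu}, namely a direct comparison of the spectral-flow crossing form with the relative Maslov crossing form through the evaluation isomorphism \eqref{evaliso}, using the identity $\frac{d}{dt}\omega(u(t),w(t))=\langle\dot S_{\lambda_0}(t)u(t),u(t)\rangle$ for $w=\partial_\lambda u_\lambda|_{\lambda_0}$ and telescoping the two half-line integrals. The paper deliberately avoids differentiating the asymptotically defined Lagrangians $E^{s}_\lambda(t_0)$, $E^{u}_\lambda(t_0)$ with respect to $\lambda$: after the reduction to a single regular crossing it restricts the problem to a compact interval $[-\eta,\eta]$ with boundary conditions in $E^u_\lambda(-\eta)$ and $E^s_\lambda(\eta)$, trivialises these $\lambda$-dependent boundary conditions by a symplectic frame obtained from the lifting Lemma \ref{famlifting}, absorbs the resulting boundary form $Q_4$ (the ``unexpected term'', supported at $t=\pm\eta$, not at infinity) by the quantitative estimates of Steps~3--5 and Lemma \ref{quadraticnondeg}, and only then applies the Robbin--Salamon computation \cite{Robbin-Salamon} on the compact interval, transporting the answer back to $E^u_\cdot(0)$, $E^s_\cdot(0)$ via \eqref{Maslovnatural} and Lemma \ref{Maslovconstant}. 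Your approach is conceptually shorter and makes the equality of crossing forms transparent; the paper's pays with a chain of norm estimates but never needs $\lambda$-derivatives of decaying solution families.

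Two points in your sketch need real work. First, the vanishing of the boundary term at $t=+\infty$ (resp.\ $-\infty$) is not automatic: uniform exponential decay of the family $u_\lambda$ together with smoothness in $\lambda$ does not by itself imply decay of $\partial_\lambda u_\lambda$, so you must construct the stable/unstable families explicitly --- e.g.\ first deform $S_\lambda$ to be constant near $\pm\infty$ as in Step~1 of the paper, so that on $[T,\infty)$ the decaying solutions are given by the spectral projections of $JS_\lambda(+\infty)$ and the needed uniform decay of $u_\lambda$ and $\partial_\lambda u_\lambda$ follows from the spectral gap (or invoke the smooth stable/unstable bundles of \cite{Alberto}). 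Second, the signs: with the convention $\Gamma(\gamma,V,\lambda_0)[v]=\omega(v,\dot v)$ one gets $\Gamma(E^s_\cdot(t_0),\cdot,\lambda_0)[u(t_0)]=-\int_{t_0}^{\infty}\langle\dot S_{\lambda_0}u,u\rangle\,dt$ and $\Gamma(E^u_\cdot(t_0),\cdot,\lambda_0)[u(t_0)]=+\int_{-\infty}^{t_0}\langle\dot S_{\lambda_0}u,u\rangle\,dt$, whose difference in the order prescribed by $\mu_{Mas}(E^u_\cdot,E^s_\cdot)$, namely $\Gamma(E^u,\cdot)-\Gamma(E^s,\cdot)$, is the full-line integral; the half-line contributions as you stated them would produce the opposite overall sign, so the bookkeeping must be done carefully to land on $\sfl(\mathcal{A})=+\mu_{Mas}(E^u_\cdot(0),E^s_\cdot(0))$.
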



\subsection{Proof of Theorem \ref{main}}
We divide the proof into eight steps.


\subsubsection*{Step 1: A simple deformation}
Since by assumption the matrices $S_\lambda(t)$ converge for $t\rightarrow\pm\infty$ uniformly in $\lambda$, we can find for every $\varepsilon>0$ a smooth family $\hat{S}:I\times\mathbb{R}\rightarrow\mathcal{S}(\mathbb{R}^{2n})$ of symmetric matrices and $T>0$ such that 

\[\|S_\lambda(t)-\hat{S}_\lambda(t)\|<\varepsilon,\quad (\lambda,t)\in I\times\mathbb{R}\]
and $\hat{S}_\lambda(t)$ is (locally) constant in $t$ for all $|t|>T$.\\
Let us now consider for $\tau\in I$ the equations

\begin{equation}\label{equationtau}
\left\{
\begin{aligned}
Ju'(t)+((1-\tau)S_\lambda(t)+\tau\hat{S}_\lambda(t))u(t)&=0,\quad t\in\mathbb{R}\\
\lim_{t\rightarrow\pm\infty}u(t)&=0
\end{aligned}
\right.
\end{equation}
and let us denote by $h(\lambda,\tau)$ the corresponding differential operators mapping $W=H^1(\mathbb{R},\mathbb{R}^{2n})$ to $H=L^2(\mathbb{R},\mathbb{R}^{2n})$. Since $\mathcal{FS}(W,H)\subset\mathcal{S}(W,H)$ and $GL(W,H)\subset\mathcal{L}(W,H)$ are open, we can take $\varepsilon>0$ above sufficiently small, such that $h(I\times I)\subset\mathcal{FS}(W,H)$ and $h(0,\tau),h(1,\tau)$ are invertible for all $\tau\in I$. By the homotopy invariance of the spectral flow, we conclude that 

\[\sfl(\mathcal{A})=\sfl(h(\cdot,0))=\sfl(h(\cdot,1)).\]
Let us now denote by $E^s_\lambda(0,\tau)$ the stable subspaces and by $E^u_\lambda(0,\tau)$ the unstable subspaces of the equations \eqref{equationtau}, and consider the homotopy

\[\hat{h}=(\hat{h}_1,\hat{h}_2):I\times I\rightarrow\Lambda(n)\times\Lambda(n),\quad \hat{h}(\lambda,\tau)=(E^s_\lambda(0,\tau),E^u_\lambda(0,\tau)).\]
Since $\ker h(0,\tau)=\ker h(1,\tau)=\{0\}$, we see by \eqref{evaliso} that $\hat{h}_1(0,\tau)\cap \hat{h}_2(0,\tau)=\{0\}$ and $\hat{h}_1(1,\tau)\cap\hat{h}_2(1,\tau)=\{0\}$ for all $\tau\in I$, and finally we obtain from the homotopy invariance of the Maslov index 

\[\mu_{Mas}(E^s_\cdot(0,1),E^u_\cdot(0,1))=\mu_{Mas}(E^s_\cdot(0,0),E^u_\cdot(0,0))=\mu_{Mas}(E^s_\cdot(0),E^u_\cdot(0)).\] 
Consequently, we can henceforth assume without loss of generality that there exists $T>0$ such that $S_\lambda(t)=S_\lambda(T)$ for all $t\geq T$ and $S_\lambda(t)=S_\lambda(-T)$ for all $t\leq -T$, $\lambda\in I$. This particularly implies that

\begin{align}\label{constantstab}
\begin{split}
E^s_\lambda(t_0)&=\{v\in\mathbb{R}^{2n}:\lim_{t\rightarrow\infty} e^{(t-t_0)JS_\lambda(T)}v=0\}=\{v\in\mathbb{R}^{2n}:\lim_{t\rightarrow\infty} e^{(t-T)JS_\lambda(T)}v=0\}\\
&=E^s_\lambda(T),\quad t_0\geq T
\end{split}
\end{align}
and

\begin{align}\label{constantunstab}
\begin{split}
E^u_\lambda(t_0)&=\{v\in\mathbb{R}^{2n}:\lim_{t\rightarrow-\infty} e^{(t-t_0)JS_\lambda(-T)}v=0\}=\{v\in\mathbb{R}^{2n}:\lim_{t\rightarrow-\infty} e^{(t+T)JS_\lambda(-T)}v=0\}\\
&=E^u_\lambda(-T),\quad t_0\leq-T
\end{split}
\end{align} 
We now use Lemma \ref{famlifting} to find smooth maps $\phi:I\times I\times[0,\infty)\rightarrow\Sp(2n)$ and $\ell_1,\ell_2:[0,\infty)\rightarrow\Lambda(n)$ such that $\phi(0,\lambda,t)\ell_1(t)=E^u_\lambda(-t)$ and $\phi(1,\lambda,t)\ell_2(t)=E^s_\lambda(t)$. By \eqref{constantstab} and \eqref{constantunstab}, we can assume that $\phi(s,\lambda,t)$ is constant in $t$ if $t>T$. Hence there is a constant $C_2>0$ such that

\begin{align}\label{estimatespsi}
\frac{1}{C_2}\|\xi\|\leq\|\phi(s,\lambda,t)\xi\|\leq C_2\|\xi\|,\qquad \| \dot{\phi}(s,\lambda,t)\xi\|\leq C_2\|\xi\|
\end{align}
for all $\xi\in\mathbb{R}^{2n}$ and $(s,\lambda,t)\in I\times I\times[0,\infty)$, where $\dot{\phi}$ denotes the derivative with respect to $\lambda$.


\subsubsection*{Step 2: Perturbations and regular crossings}
We consider the path $\mathcal{A}$ of selfadjoint Fredholm operators introduced in \eqref{diffop} and we set $\mathcal{A}^\delta:=\mathcal{A}+\delta I_H$ as in Section \ref{sect:sfl}. Since the operators $\mathcal{A}_0$ and $\mathcal{A}_1$ are invertible by assumption A2), there exists $\delta_1>0$ such that $\mathcal{A}^\delta_0, \mathcal{A}^\delta_1$ are invertible for all $|\delta|<\delta_1$. Moreover, by the first part of Theorem \ref{thm:Sard}, we can assume that $\mathcal{A}^\delta_t\in\mathcal{FS}(W,H)$ for all $t\in I$ and all $|\delta|<\delta_1$. Finally, we use the second part of Theorem \ref{thm:Sard} to find $0<\delta<\delta_1$ such that $\mathcal{A}^\delta$ has only regular crossings, and we note that the straight homotopy

\[h:I\times I\rightarrow\mathcal{FS}(W,H),\quad h(\lambda,\tau)=\mathcal{A}_\lambda+\tau\cdot\delta I_H\]
shows that $\sfl(\mathcal{A})=\sfl(\mathcal{A}^\delta)$.\\
Let us now consider the equations

\begin{equation}\label{HamiltonianII}
\left\{
\begin{aligned}
Ju'(t)+S_\lambda(t)u(t)+\delta\,u(t)&=0,\quad t\in\mathbb{R}\\
\lim_{t\rightarrow\pm\infty}u(t)&=0
\end{aligned}
\right.
\end{equation}
which correspond to the operators $\mathcal{A}^\delta$. We denote by $E^s_\lambda(0,\delta)$ the stable subspaces and by $E^u_\lambda(0,\delta)$ the unstable subspaces of the perturbed equations \eqref{HamiltonianII}, and we consider the homotopy

\[h=(h_1,h_2):I\times I\rightarrow\Lambda(n)\times\Lambda(n),\quad h(\lambda,\tau)=(E^s_\lambda(0,\tau\cdot\delta),E^u_\lambda(0,\tau\cdot\delta)).\]
Since $h_1(0,\tau)\cap h_2(0,\tau)=\{0\}$ and $h_1(1,\tau)\cap h_2(1,\tau)=\{0\}$ for all $\tau\in I$ by \eqref{evaliso}, we infer that 

\[\mu_{Mas}(E^s_\cdot(0,\delta),E^u_\cdot(0,\delta))=\mu_{Mas}(E^s_\cdot(0,0),E^u_\cdot(0,0))=\mu_{Mas}(E^s_\cdot(0),E^u_\cdot(0)).\] 
The conclusion of this second step is that it suffices to prove Theorem \ref{main} under the assumption that $\mathcal{A}$ has only regular crossings. Moreover, by Theorem \ref{thm:crossing} we can assume that there is only one crossing, which we henceforth denote by $\lambda_0$.\\

\vspace*{0.5cm}

For the rest of the proof, we fix a basis $\{u_1,\ldots,u_m\}$ of $\ker\mathcal{A}_{\lambda_0}$, which is orthonormal with respect to the scalar product of $H=L^2(\mathbb{R},\mathbb{R}^n)$. Let us note the following estimate for later reference.

\begin{lemma}\label{lem:estimatevalueu}
If $u\in\ker\mathcal{A}_{\lambda_0}$, then

\[\|u(t)\|\leq m\max_{i=1,\ldots m}\|u_i(t)\|\,\|u\|_H,\quad t\in\mathbb{R}.\]
\end{lemma}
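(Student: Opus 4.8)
The plan is to expand $u$ in the chosen orthonormal basis and then estimate pointwise. Since $\{u_1,\ldots,u_m\}$ is an orthonormal basis of $\ker\mathcal{A}_{\lambda_0}$ with respect to the scalar product of $H=L^2(\mathbb{R},\mathbb{R}^{2n})$, any $u\in\ker\mathcal{A}_{\lambda_0}$ can be written as $u=\sum_{i=1}^m c_i u_i$ with $c_i=\langle u,u_i\rangle_H$. First I would invoke the Cauchy--Schwarz inequality (equivalently, Bessel/Parseval) to get $|c_i|\leq\|u\|_H\|u_i\|_H=\|u\|_H$ for each $i$; in fact $\sum_{i=1}^m c_i^2=\|u\|_H^2$, but the cruder bound $|c_i|\le\|u\|_H$ is all that is needed.

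Next, evaluating the expansion at a fixed $t\in\mathbb{R}$ and applying the triangle inequality in $\mathbb{R}^{2n}$ gives
\[
\|u(t)\|\leq\sum_{i=1}^m|c_i|\,\|u_i(t)\|\leq\|u\|_H\sum_{i=1}^m\|u_i(t)\|\leq m\max_{i=1,\ldots,m}\|u_i(t)\|\,\|u\|_H,
\]
which is the asserted estimate. The computation is elementary, so I do not expect any real obstacle here; the only point worth a word of care is that the expansion $u=\sum c_i u_i$ holds pointwise (indeed the $u_i$ and $u$ are smooth solutions of $Ju'+S_{\lambda_0}u=0$, so the identity in $H$ forces the identity everywhere on $\mathbb{R}$), which justifies evaluating at $t$.
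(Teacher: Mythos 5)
Your proof is correct and uses essentially the same idea as the paper: expand $u$ in the $L^2$-orthonormal basis $\{u_i\}$, bound each coefficient by $\|u\|_H$, and estimate pointwise. The only cosmetic difference is that you apply the triangle inequality directly to $\|u(t)\|$, whereas the paper expands $\|u(t)\|^2$ as a double sum and bounds that; both arrive at the same estimate.
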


\begin{proof}
Let $\alpha_i\in\mathbb{R}$ be such that $u=\sum^m_{i=1}{\alpha_iu_i}$. Since $u_1,\ldots,u_m$ is orthonormal in $H$, this in particular implies that $\sum^m_{i=1}\alpha^2_i=\|u\|^2_H$ and we obtain for $t\in\mathbb{R}$

\begin{align*}
\|u(t)\|^2&=\left\|\sum^m_{i=1}{\alpha_iu_i(t)}\right\|^2=\sum^m_{i,j=1}{\alpha_i\alpha_j\langle u_i(t),u_j(t)\rangle}\leq \sum^m_{i,j=1}{|\alpha_i||\alpha_j|\|u_i(t)\|\|u_j(t)\|}\\
&\leq\max_{i=1,\ldots,m}\|u_i(t)\|^2\sum^m_{i,j=1}{|\alpha_i||\alpha_j|}\leq m^2\max_{i=1,\ldots,m}\|u_i(t)\|^2\|u\|^2_H.
\end{align*}
\end{proof}


\subsubsection*{Step 3: Restriction to a finite time interval - the quadratic forms $Q_1$}

By Theorem \ref{thm:crossing}, we have

\[\sfl(\mathcal{A})=\sgn\Gamma(\mathcal{A},\lambda_0),\]
where $\Gamma(\mathcal{A},\lambda_0)$ is the non-degenerate quadratic form on $\ker\mathcal{A}_{\lambda_0}$ defined by

\[\Gamma(\mathcal{A},\lambda_0)[u]=\langle\dot S_{\lambda_0}u,u\rangle_{L^2(\mathbb{R},\mathbb{R}^n)}=\int^\infty_{-\infty}{\langle\dot S_{\lambda_0}(t)u(t),u(t)\rangle\, dt},\quad u\in\ker\mathcal{A}_{\lambda_0}.\]
Our task in this step is to find a compact subinterval of $\mathbb{R}$ to which we may restrict the integration in the definition of $\Gamma(\mathcal{A},\lambda_0)$.

\begin{lemma}\label{lemma-estimateeta}
For every $\varepsilon>0$ there exists $\eta>0$ such that

\[|\int^\infty_{-\infty}{\langle\dot S_{\lambda_0}(t)u(t),u(t)\rangle\, dt}-\int^\eta_{-\eta}{\langle\dot S_{\lambda_0}(t)u(t),u(t)\rangle\, dt}|\leq \varepsilon \|u\|^2_{H},\quad u\in\ker\mathcal{A}_{\lambda_0}.\]
\end{lemma}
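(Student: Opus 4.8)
The plan is to use the exponential decay estimate \eqref{decay} available for solutions of \eqref{Hamiltonian} together with the uniform bound \eqref{growth} on $\dot{S}_\lambda$, and to control the size of $u$ away from a large interval by the norm $\|u\|_H$ via Lemma \ref{lem:estimatevalueu}. First I would fix $\varepsilon>0$. The difference of the two integrals is, in absolute value,
\[
\left|\int_{|t|>\eta}\langle\dot S_{\lambda_0}(t)u(t),u(t)\rangle\,dt\right|\leq C_1\int_{|t|>\eta}\|u(t)\|^2\,dt,
\]
using \eqref{growth}. So the task reduces to showing that the tail $\int_{|t|>\eta}\|u(t)\|^2\,dt$ is bounded by a constant (independent of $u\in\ker\mathcal{A}_{\lambda_0}$) times $\|u\|_H^2$, with the constant tending to $0$ as $\eta\to\infty$.

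Next I would invoke Lemma \ref{lem:estimatevalueu}, which gives $\|u(t)\|^2\leq m^2\max_{i}\|u_i(t)\|^2\,\|u\|_H^2$ for every $t$, where $u_1,\dots,u_m$ is the fixed orthonormal basis of $\ker\mathcal{A}_{\lambda_0}$ chosen just before. Hence
\[
\int_{|t|>\eta}\|u(t)\|^2\,dt\leq m^2\|u\|_H^2\int_{|t|>\eta}\max_{i=1,\dots,m}\|u_i(t)\|^2\,dt\leq m^2\|u\|_H^2\sum_{i=1}^m\int_{|t|>\eta}\|u_i(t)\|^2\,dt.
\]
Since each $u_i\in\ker\mathcal{A}_{\lambda_0}\subset H^1(\mathbb{R},\mathbb{R}^{2n})\subset L^2$, each function $t\mapsto\|u_i(t)\|^2$ is integrable on $\mathbb{R}$, so by dominated convergence (or simply absolute continuity of the Lebesgue integral) we can choose $\eta>0$ large enough that $\sum_{i=1}^m\int_{|t|>\eta}\|u_i(t)\|^2\,dt<\varepsilon/(C_1 m^2)$. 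Combining the three displays yields the claimed bound. Note that one does not even need the sharp exponential decay \eqref{decay} here; mere $L^2$-integrability of the finitely many basis elements suffices, which is why the argument is so short.

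The only point requiring a little care — and the mild "obstacle" — is that the estimate must be \emph{uniform} over the infinite-dimensional-looking set $\ker\mathcal{A}_{\lambda_0}$; this is exactly what Lemma \ref{lem:estimatevalueu} is designed to handle, reducing everything to the fixed finite orthonormal basis $\{u_1,\dots,u_m\}$. Once that reduction is made, the choice of $\eta$ depends only on $\varepsilon$, on the constant $C_1$ from \eqref{growth}, on $m$, and on the basis vectors, but not on the particular $u$, which is precisely the statement of Lemma \ref{lemma-estimateeta}.
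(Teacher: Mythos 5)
Your proof is correct and follows essentially the same route as the paper: reduce the tail estimate to the fixed orthonormal basis $\{u_1,\dots,u_m\}$ of $\ker\mathcal{A}_{\lambda_0}$ via the bound \eqref{growth}, and then choose $\eta$ so that the tail $L^2$-norms of the finitely many basis vectors are small. The only (harmless) difference is that you apply Lemma \ref{lem:estimatevalueu} pointwise and integrate, picking up a factor $m^2$, whereas the paper expands $u=\sum_i\alpha_iu_i$ and applies Cauchy--Schwarz directly to $\|u\|_{L^2(\eta,\infty)}$; both give the required uniformity in $u$.
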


\begin{proof}
We estimate

\begin{align}\label{estimateeta}
\begin{split}
&|\int^\infty_{-\infty}{\langle\dot S_{\lambda_0}(t)u(t),u(t)\rangle\, dt}-\int^\eta_{-\eta}{\langle\dot S_{\lambda_0}(t)u(t),u(t)\rangle\, dt}|\\
&\leq|\int^\infty_{\eta}{\langle\dot S_{\lambda_0}(t)u(t),u(t)\rangle\, dt}|+|\int^{-\eta}_{-\infty}{\langle\dot S_{\lambda_0}(t)u(t),u(t)\rangle\, dt}|
\end{split}
\end{align}
and now consider the first term on the right hand side. From \eqref{growth}, we obtain for any $u\in H$

\begin{align}\label{estimate}
|\int^\infty_{\eta}{\langle\dot S_{\lambda_0}(t)u(t),u(t)\rangle\, dt}|\leq C_1\|u\|^2_{L^2(\eta,\infty)}.
\end{align}
Now let $\{u_1,\ldots,u_m\}$ be the basis of $\ker\mathcal{A}_{\lambda_0}$ that we defined in the second step of the proof, and let $\eta$ be large enough such that $C_1\sum^m_{i=1}{\|u_i\|^2_{L^2(\eta,\infty)}}<\frac{\varepsilon}{2}$. Since every $u\in\ker\mathcal{A}_{\lambda_0}$ can be written as
$u=\sum^m_{i=1}{\alpha_iu_i}$, where $\sum^m_{i=1}{\alpha^2_i}=\|u\|^2_H$, we obtain from \eqref{estimate}

\begin{align*}
|\int^\infty_{\eta}{\langle\dot S_{\lambda_0}(t)u(t),u(t)\rangle\, dt}|&\leq C_1\|u\|^2_{L^2(\eta,\infty)}\leq C_1\left(\sum^m_{i=1}{|\alpha_i|\|u_i\|_{L^2(\eta,\infty)}} \right)^2\\
&\leq  C_1\left(\sum^m_{i=1}{|\alpha_i|^2}\right)\left(\sum^m_{i=1}{\|u_i\|^2_{L^2(\eta,\infty)}} \right)<\frac{\varepsilon}{2}\|u\|^2_H.
\end{align*}  
Clearly, the assertion now follows by estimating the second term in \eqref{estimateeta} in a similar way and taking a larger $\eta>0$ if necessary.
\end{proof}

Hence there exists $\eta_0>0$ such that for all $\eta>\eta_0$ the quadratic form 

\[Q_1:\ker\mathcal{A}_{\lambda_0}\rightarrow\mathbb{R},\quad Q_1[u]=\int^\eta_{-\eta}{\langle\dot S_{\lambda_0}(t)u(t),u(t)\rangle\, dt}\]
has the property

\begin{align}\label{estimates}
&\|\Gamma(\mathcal{A},\lambda_0)-Q_1\|<\|L^{-1}_{\mathcal{A}}\|^{-1}<2\,\|L^{-1}_1\|^{-1},
\end{align}
where $L_\mathcal{A}$ and $L_1$ denote the symmetric linear operators on $\ker\mathcal{A}_{\lambda_0}$ that represent the quadratic forms $\Gamma(\mathcal{A},\lambda_0)$ and $L_1$ (cf. Appendix A).
Note that by Lemma \ref{quadraticnondeg}, the first inequality in \eqref{estimates} in particular implies that $Q_1$ is non-degenerate and $\sgn Q_1=\sgn\Gamma(\mathcal{A},\lambda_0)$.\\
Moreover, since the base functions $u_i\in\ker\mathcal{A}_{\lambda_0}$, $1\leq i\leq m$, that we defined in the second step of the proof, decay to $0$ exponentially for $t\rightarrow\pm\infty$ (cf. \eqref{decay}), we may assume that $\eta$ is sufficiently large such that

\begin{align}\label{estimatesII}
&\max_{i=1,\ldots,m}\|u_i(\pm\eta)\|^2<\frac{1}{16m^2C^8_2}\|L^{-1}_{\mathcal{A}}\|^{-1}.
\end{align}
Finally, by using the assertion of Lemma \ref{lemma-estimateeta} in the special case that $\dot{S}_{\lambda_0}$ is the identity, we obtain that $(1-\varepsilon)\|u\|^2_H\leq\|u\|^2_{L^2([-\eta,\eta],\mathbb{R}^{2n})}$ for all $u\in\ker\mathcal{A}_{\lambda_0}$. Accordingly, for $\eta$ sufficiently large, we have that

\begin{align}\label{estimatesIII}
\|u\|_H\leq2\,\|u\|_{L^2([-\eta,\eta],\mathbb{R}^{2n})},\quad u\in\ker\mathcal{A}_{\lambda_0}.
\end{align}
From now on we let $\eta>0$ be fixed such that \eqref{estimates}, \eqref{estimatesII} and \eqref{estimatesIII} hold.


\subsubsection*{Step 4: The operators $A_\lambda$ and the quadratic forms $Q_2$}
Note that in the previous step we have reduced the integration to a finite interval, however, the functions are still defined on the whole real line. The aim of this third step is to reduce the setting to functions that are defined on a finite interval.\\
Before we perform this reduction, we introduce a family of operators that we will also need below in a subsequent step of our proof. We consider the restriction map $r:H^1(\mathbb{R},\mathbb{R}^{2n})\rightarrow H^1([-\eta,\eta],\mathbb{R}^{2n})$, and set 

\[A_\lambda:=r\mid_{\ker\mathcal{A}_{\lambda}}:\ker\mathcal{A}_{\lambda}\rightarrow L^2([-\eta,\eta],\mathbb{R}^{2n}),\] 
which is an injective linear map. Indeed, $A_\lambda u=0$ means that $u$ is a solution of $Ju'+S_{\lambda}u=0$ that vanishes on an open subset of the real line, which clearly implies $u=0\in\ker\mathcal{A}_\lambda$.\\
We now introduce finite dimensional subspaces of $L^2([-\eta,\eta],\mathbb{R}^{2n})$ by

\begin{align}\label{U}
\begin{split}
U_\lambda=\{u\in H^1([-\eta,\eta],\mathbb{R}^{2n}):&\, Ju'(t)+S_{\lambda}(t)u(t)=0,\,t\in[-\eta,\eta],\\
& u(-\eta)\in E^u_\lambda(-\eta),\, u(\eta)\in E^s_\lambda(\eta)\}\subset L^2([-\eta,\eta],\mathbb{R}^{2n}).
\end{split}
\end{align}

\begin{lemma}
The image of $A_\lambda$ is $U_\lambda$, $\lambda\in I$.
\end{lemma}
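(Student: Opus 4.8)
The plan is to show the two inclusions $\im A_\lambda \subseteq U_\lambda$ and $U_\lambda \subseteq \im A_\lambda$ separately, exploiting the characterization of the stable and unstable subspaces via decay of solutions together with the fact (recorded in Step~1, equations \eqref{constantstab}, \eqref{constantunstab}) that $S_\lambda$ is constant outside $[-T,T]$, so that — after enlarging $\eta$ if necessary — we may assume $\eta > T$ and hence $E^u_\lambda(-\eta) = E^u_\lambda(-T) = E^u_\lambda(-t_0)$ for all $t_0 \geq \eta$, and similarly $E^s_\lambda(\eta)=E^s_\lambda(T)=E^s_\lambda(t_0)$ for all $t_0\geq\eta$.

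For the inclusion $\im A_\lambda \subseteq U_\lambda$: take $u \in \ker\mathcal{A}_\lambda$, so $u$ solves $Ju'+S_\lambda u=0$ on all of $\mathbb{R}$ and decays to $0$ at $\pm\infty$. Its restriction $A_\lambda u$ obviously solves the equation on $[-\eta,\eta]$ and lies in $H^1([-\eta,\eta],\mathbb{R}^{2n})$. By the very definition \eqref{stableunstablesubspaces} of the unstable subspace (in the form given after \eqref{diffop} via the fundamental solution $\Psi_{(\lambda,-\eta)}$), the value $u(-\eta)$ belongs to $E^u_\lambda(-\eta)$ because $u(t)\to 0$ as $t\to -\infty$; likewise $u(\eta)\in E^s_\lambda(\eta)$ because $u(t)\to 0$ as $t\to+\infty$. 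Hence $A_\lambda u \in U_\lambda$.

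For the reverse inclusion $U_\lambda \subseteq \im A_\lambda$: given $v \in U_\lambda$, I would extend it to a function $\tilde v$ on all of $\mathbb{R}$ by solving the ODE $J\tilde v' + S_\lambda \tilde v = 0$ with the given value $v(\eta)$ at $t=\eta$ to the right and $v(-\eta)$ at $t=-\eta$ to the left (these extensions agree with $v$ on $[-\eta,\eta]$ by uniqueness for linear ODEs). Since $v(\eta)\in E^s_\lambda(\eta)=E^s_\lambda(T)$ and $S_\lambda$ is constant equal to $S_\lambda(T)$ for $t\geq T$, the solution $e^{(t-\eta)JS_\lambda(T)}v(\eta)$ decays to $0$ as $t\to+\infty$ by the defining property of $E^s_\lambda(+\infty)$ in \eqref{stabinf} (here I use the hyperbolicity assumption A1 and Lemma~\ref{stabsympl}); analogously the leftward extension, starting from $v(-\eta)\in E^u_\lambda(-\eta)=E^u_\lambda(-T)$ and using that $S_\lambda$ is constant for $t\leq -T$, decays to $0$ as $t\to -\infty$. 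Thus $\tilde v$ is a solution of \eqref{Hamiltonian}, i.e. $\tilde v\in\ker\mathcal{A}_\lambda$, and $A_\lambda\tilde v = v$.

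I expect the only genuinely delicate point to be making sure that the boundary conditions appearing in the definition \eqref{U} of $U_\lambda$ — namely $u(-\eta)\in E^u_\lambda(-\eta)$ and $u(\eta)\in E^s_\lambda(\eta)$ — really do force decay of the extended solution at $\pm\infty$, which is precisely where one needs $\eta$ large enough that $S_\lambda$ has already become constant (so that the stable/unstable subspaces at $\pm\eta$ coincide with the genuine stable/unstable eigenspaces of the constant-coefficient limit problem, whose solutions decay honestly by A1). Everything else — smoothness of the extension, the $H^1$ membership on the compact interval, and injectivity of $A_\lambda$, already noted in the text — is routine. One should also remark that this lemma makes $A_\lambda\colon\ker\mathcal{A}_\lambda \to U_\lambda$ a linear isomorphism, which is what subsequent steps will use.
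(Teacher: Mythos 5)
Your proof is correct and follows essentially the same route as the paper: restriction of a kernel element lands in $U_\lambda$ because the definition \eqref{stableunstablesubspaces} of $E^u_\lambda(-\eta)$, $E^s_\lambda(\eta)$ is exactly membership in the trace of a globally decaying solution, and conversely an element of $U_\lambda$ is extended by the ODE to a decaying solution on $\mathbb{R}$, which lies in $H^1(\mathbb{R},\mathbb{R}^{2n})$ by the exponential decay \eqref{decay} and hence in $\ker\mathcal{A}_\lambda$. The only difference is that you route the decay of the extension through the constant-coefficient structure via \eqref{constantstab}, \eqref{constantunstab}, which forces you to enlarge $\eta$ so that $\eta>T$; this is harmless (the conditions fixing $\eta$ in Step 3 are all of ``sufficiently large'' type), but it is also unnecessary: since $v(-\eta)\in E^u_\lambda(-\eta)$ and $v(\eta)\in E^s_\lambda(\eta)$ are by definition values of solutions of the full nonautonomous equation on $\mathbb{R}$ decaying at $-\infty$ resp.\ $+\infty$, uniqueness for the linear ODE identifies your extension with such solutions outside $[-\eta,\eta]$, so the decay at both ends holds for the fixed $\eta$ of Step 3 with no relation to $T$, which is how the paper argues.
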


\begin{proof}
We first note that $\im A_\lambda\subset U_\lambda$ since $u(t)\rightarrow 0$, $t\rightarrow\pm\infty$, for any $u\in\ker\mathcal{A}_{\lambda}$. Moreover, it is clear that any element in $U_\lambda$ can be extended to a solution of \eqref{Hamiltonian} on $\mathbb{R}$. Since solutions of \eqref{Hamiltonian} belong to $H^1(\mathbb{R},\mathbb{R}^{2n})$ (cf. \eqref{decay}) and consequently to the kernel of $\mathcal{A}_\lambda$, we conclude that $A_\lambda$ is surjective onto $U_\lambda$.
\end{proof}

Let us note for later reference that $A_\lambda\neq 0$ if and only if $\lambda=\lambda_0$, and

\begin{align}\label{normA}
\|A_{\lambda_0}\|= 1.
\end{align}
Moreover, we conclude from \eqref{estimatesIII} that

\begin{align}\label{normAinv}
\|A^{-1}_{\lambda_0}\|\leq 2.
\end{align}
We now introduce a quadratic form by 

\[Q_2:U_{\lambda_0}\rightarrow\mathbb{R},\quad Q_2[u]=\langle\dot S_{\lambda_0}u,u\rangle_{L^2([-\eta,\eta],\mathbb{R}^n)}=\int^\eta_{-\eta}{\langle\dot S_{\lambda_0}(t)u(t),u(t)\rangle\, dt},\]
and note that $Q_1[u]=Q_2[A_{\lambda_0}u]$, $u\in\ker\mathcal{A}_{\lambda_0}$. Since $A_{\lambda_0}:\ker\mathcal{A}_{\lambda_0}\rightarrow U_{\lambda_0}$ is an isomorphism, we conclude that $Q_2$ is non-degenerate and $\sgn Q_2=\sgn Q_1$.


\subsubsection*{Step 5: The operators $\widetilde{\mathcal{A}}$.}
We defined maps $\phi:I\times I\times[0,\infty)\rightarrow\Sp(2n)$ in the first step of our proof, and we now set

\[\psi:I\times [-\eta,\eta]\rightarrow\Sp(2n),\quad \psi(\lambda,t)=\phi\left(\frac{1}{2\eta}(t+\eta),\lambda,\eta\right).\]
Note that $\psi(\lambda,-\eta)\ell_1(\eta)=E^u_\lambda(-\eta)$ and $\psi(\lambda,\eta)\ell_2(\eta)=E^s_\lambda(\eta)$, and let us write for notational convenience $\psi_\lambda(t):=\psi(\lambda,t)$.\\
We now consider

\[\widetilde{W}:=\{u\in H^1([-\eta,\eta],\mathbb{R}^{2n}):\, u(-\eta)\in\ell_1(\eta),\, u(\eta)\in\ell_2(\eta)\},\quad \widetilde{H}=L^2([-\eta,\eta],\mathbb{R}^{2n}),\]
and the family of operators

\[\widetilde{\mathcal{A}}_\lambda:\widetilde{W}\rightarrow\widetilde{H},\quad\widetilde{\mathcal{A}}_\lambda u=Ju'(t)+\widetilde{S}_\lambda(t)u(t),\]
where
\[\widetilde S_\lambda(t)=\psi_\lambda(t)^\ast J\psi'_\lambda(t)+\psi_\lambda(t)^\ast S_\lambda(t)\psi_\lambda(t),\quad (\lambda,t)\in I\times [-\eta,\eta].\]
Note that $\psi_\lambda(t)^\ast J\psi_\lambda(t)=J$ implies

\begin{align}\label{vanishing}
\psi'_\lambda(t)^\ast J\psi_\lambda(t)+\psi_\lambda^\ast(t) J\psi'_\lambda(t)=0,\quad \lambda\in I,
\end{align}
and hence $\widetilde S_\lambda(t)^\ast=\widetilde S_\lambda(t)$. From $\ell_1(\eta),\ell_2(\eta)\in\Lambda(n)$, we see that $\widetilde{\mathcal{A}}$ is a path in $\mathcal{FS}(\widetilde{W},\widetilde{H})$. For later reference, let $\widetilde{\Psi}_\lambda:[-\eta,\eta]\rightarrow\Sp(2n)$ denote the solution of the initial value problem

\begin{equation}\label{tildepsi}
\left\{
\begin{aligned}
J\widetilde{\Psi}'_{\lambda}(t)+\widetilde{S}_\lambda(t)\widetilde{\Psi}_{\lambda}(t)&=0,\quad t\in[-\eta,\eta]\\
\widetilde{\Psi}_{\lambda}(-\eta)&=I_{2n}.
\end{aligned}
\right.
\end{equation}

\begin{lemma}\label{isoB}
For each $\lambda\in I$, the map 

\[L^2([-\eta,\eta],\mathbb{R}^{2n})\rightarrow L^2([-\eta,\eta],\mathbb{R}^{2n}),\quad u\mapsto \psi_{\lambda}u\]
defines an isomorphism $B_\lambda$ between $\ker\widetilde{\mathcal{A}}_{\lambda}$ and the space $U_\lambda$ introduced in \eqref{U}, such that

\begin{align}\label{normBinv}
\|B_\lambda\|\leq C_2\quad\text{and}\quad\|B^{-1}_\lambda\|\leq C_2,\quad\lambda\in I.
\end{align}
\end{lemma}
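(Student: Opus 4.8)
The plan is to verify three things in turn: that $u\mapsto\psi_\lambda u$ carries $\ker\widetilde{\mathcal{A}}_\lambda$ bijectively onto $U_\lambda$, and then the two norm bounds \eqref{normBinv}. The key observation behind the bijection is a conjugation identity: if $u\in H^1([-\eta,\eta],\mathbb{R}^{2n})$ and $v:=\psi_\lambda u$, then $Jv'+S_\lambda v = J\psi'_\lambda u + J\psi_\lambda u' + S_\lambda\psi_\lambda u = \psi_\lambda\bigl(\psi_\lambda^\ast J\psi'_\lambda u + \psi_\lambda^\ast J\psi_\lambda u' + \psi_\lambda^\ast S_\lambda\psi_\lambda u\bigr) = \psi_\lambda(Ju' + \widetilde{S}_\lambda u) = \psi_\lambda\widetilde{\mathcal{A}}_\lambda u$, where I have used $\psi_\lambda^\ast J\psi_\lambda = J$ (which is the symplecticity of $\psi_\lambda$, also the source of \eqref{vanishing}) and the definition of $\widetilde S_\lambda$. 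Hence $\widetilde{\mathcal{A}}_\lambda u = 0$ if and only if $v=\psi_\lambda u$ solves $Jv'+S_\lambda v=0$ on $[-\eta,\eta]$.

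Next I would match up the boundary conditions. An element $u$ lies in $\widetilde W$ precisely when $u(-\eta)\in\ell_1(\eta)$ and $u(\eta)\in\ell_2(\eta)$; applying $\psi_\lambda(-\eta)$ and $\psi_\lambda(\eta)$ and using $\psi_\lambda(-\eta)\ell_1(\eta)=E^u_\lambda(-\eta)$ and $\psi_\lambda(\eta)\ell_2(\eta)=E^s_\lambda(\eta)$ (noted just before the lemma), this is equivalent to $v(-\eta)=\psi_\lambda(-\eta)u(-\eta)\in E^u_\lambda(-\eta)$ and $v(\eta)=\psi_\lambda(\eta)u(\eta)\in E^s_\lambda(\eta)$. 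Comparing with the definition \eqref{U} of $U_\lambda$, this shows that $u\in\ker\widetilde{\mathcal{A}}_\lambda$ if and only if $v=\psi_\lambda u\in U_\lambda$. Since $\psi_\lambda(t)$ is invertible for every $t$ (it is symplectic), the map $u\mapsto\psi_\lambda u$ is a linear bijection $L^2([-\eta,\eta],\mathbb{R}^{2n})\to L^2([-\eta,\eta],\mathbb{R}^{2n})$ with inverse $v\mapsto\psi_\lambda^{-1}v$, and it restricts to the claimed isomorphism $B_\lambda:\ker\widetilde{\mathcal{A}}_\lambda\to U_\lambda$.

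Finally, the norm bounds are immediate from the pointwise estimate $\tfrac{1}{C_2}\|\xi\|\le\|\phi(s,\lambda,t)\xi\|\le C_2\|\xi\|$ in \eqref{estimatespsi}, which transfers to $\psi_\lambda(t)=\phi(\tfrac{1}{2\eta}(t+\eta),\lambda,\eta)$ for all $t\in[-\eta,\eta]$; note that the second inequality applied to $\psi_\lambda(t)^{-1}$ together with the first gives $\|\psi_\lambda(t)^{-1}\xi\|\le C_2\|\xi\|$ as well. Integrating $\|\psi_\lambda(t)u(t)\|^2\le C_2^2\|u(t)\|^2$ over $[-\eta,\eta]$ yields $\|B_\lambda u\|_{\widetilde H}\le C_2\|u\|_{\widetilde H}$, and the same argument applied to $B_\lambda^{-1}$, i.e. to multiplication by $\psi_\lambda^{-1}$, gives $\|B_\lambda^{-1}\|\le C_2$. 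There is no real obstacle here; the only point requiring a little care is bookkeeping the transpose-versus-inverse relations for symplectic matrices ($\psi_\lambda^\ast J\psi_\lambda = J$ hence $\psi_\lambda^{-1} = -J\psi_\lambda^\ast J$), which is exactly what makes $\widetilde S_\lambda$ symmetric and the conjugation identity work cleanly.
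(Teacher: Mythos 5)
Your proof follows the same route as the paper, but there is an algebraic slip in the central conjugation identity. You write
\[
J\psi'_\lambda u + J\psi_\lambda u' + S_\lambda\psi_\lambda u
= \psi_\lambda\bigl(\psi_\lambda^\ast J\psi'_\lambda u + \psi_\lambda^\ast J\psi_\lambda u' + \psi_\lambda^\ast S_\lambda\psi_\lambda u\bigr),
\]
but expanding the right side gives, e.g., $\psi_\lambda\psi_\lambda^\ast J\psi'_\lambda u$ where you need $J\psi'_\lambda u$, and $\psi_\lambda\psi_\lambda^\ast S_\lambda\psi_\lambda u$ where you need $S_\lambda\psi_\lambda u$; these agree only if $\psi_\lambda\psi_\lambda^\ast = I_{2n}$, i.e. if $\psi_\lambda$ were orthogonal, which a symplectic matrix need not be. The correct prefactor is $(\psi_\lambda^\ast)^{-1}$ rather than $\psi_\lambda$: using $\psi_\lambda^\ast J\psi_\lambda = J$ in the form $(\psi_\lambda^\ast)^{-1}J = J\psi_\lambda$ one gets
\[
J(\psi_\lambda u)' + S_\lambda\,\psi_\lambda u = (\psi_\lambda^\ast)^{-1}\bigl(Ju' + \psi_\lambda^\ast J\psi'_\lambda u + \psi_\lambda^\ast S_\lambda\psi_\lambda u\bigr) = (\psi_\lambda^\ast)^{-1}\,\widetilde{\mathcal{A}}_\lambda u,
\]
which is exactly the computation in the paper. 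Since $(\psi_\lambda^\ast)^{-1}$ is invertible, the conclusion you draw --- that $\widetilde{\mathcal{A}}_\lambda u = 0$ if and only if $\psi_\lambda u$ solves $Ju'+S_\lambda u=0$ --- is unaffected, and the rest of your argument (matching boundary conditions via $\psi_\lambda(-\eta)\ell_1(\eta)=E^u_\lambda(-\eta)$, $\psi_\lambda(\eta)\ell_2(\eta)=E^s_\lambda(\eta)$, and the pointwise bounds \eqref{estimatespsi} integrated over $[-\eta,\eta]$ for \eqref{normBinv}) is correct and identical in spirit to the paper's. So: right approach, right conclusion, but fix the factored-out matrix in the displayed identity.
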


\begin{proof}
Let us assume that $u\in\ker\widetilde{\mathcal{A}}_{\lambda}$, i.e. 

\[Ju'(t)+\widetilde{S}_{\lambda}(t)u(t)=0,\,\lambda\in I,\quad u(-\eta)\in\ell_1(\eta),\,u(\eta)\in\ell_2(\eta).\] 
We obtain $(B_\lambda u)(-\eta)\in E^u_\lambda(-\eta)$, $(B_\lambda u)(\eta)\in E^s_\lambda(\eta)$, as well as 

\begin{align*}
&J(B_\lambda u)'(t)+S_{\lambda}(t)(B_\lambda u)(t)=J\psi'_{\lambda}(t)u(t)+J\psi_{\lambda}(t)u'(t)+S_{\lambda}(t)\psi_{\lambda}(t)u(t)\\
&=(\psi_{\lambda}(t)^\ast)^{-1}\left(Ju'(t)+\psi_{\lambda}(t)^\ast J\psi'_{\lambda}(t)u(t)+\psi_{\lambda}(t)^\ast S_{\lambda}(t)\psi_{\lambda}(t)u(t)\right)=0,\quad t\in [-\eta,\eta].
\end{align*}
Hence $B_\lambda u\in U_\lambda$. A similar computation shows that $B^{-1}_\lambda$ maps $U_\lambda$ into $\ker\widetilde{\mathcal{A}}_{\lambda}$, and finally, the estimates \eqref{normBinv} follow from \eqref{estimatespsi}.
\end{proof}

Since $U_{\lambda}$ is isomorphic to $\ker\mathcal{A}_\lambda$ by Step 3, we conclude that $\widetilde{\mathcal{A}}_\lambda$ is invertible if and only if $\lambda\neq\lambda_0$. Consequently, the spectral flow $\sfl(\widetilde{\mathcal{A}})$ is defined and $\lambda_0$ is the only crossing of $\widetilde{\mathcal{A}}$. The associated crossing form is

\[\Gamma(\widetilde{\mathcal{A}},\lambda_0)[u]=\int^\eta_{-\eta}{\left\langle\dot{\widetilde{S}}_{\lambda_0}(t)u(t),u(t)\right\rangle\, dt},\quad u\in\ker\widetilde{\mathcal{A}}_{\lambda_0}.\]
We now define a quadratic form on $\ker\widetilde{\mathcal{A}}_{\lambda_0}$ by

\[Q_3:\ker\widetilde{\mathcal{A}}_{\lambda_0}\rightarrow\mathbb{R},\quad Q_3[u]=Q_2[B_{\lambda_0}u],\]
and note that $Q_3$ is non-degenerate and has the same signature than $\Gamma(\mathcal{A},\lambda_0)$ by the previous steps of the proof. The following lemma shows that $\sfl(\mathcal{A})=\sfl(\widetilde{\mathcal{A}})$ (cf. Lemma \ref{quadraticnondeg}).

\begin{lemma}
There is a quadratic form $Q_4$ on $\ker\widetilde{\mathcal{A}}_{\lambda_0}$ such that 

\[\Gamma(\widetilde{\mathcal{A}},\lambda_0)[u]=Q_3[u]+Q_4[u],\quad u\in\ker\widetilde{\mathcal{A}}_{\lambda_0},\]
and $\|L_4\|<\|L^{-1}_3\|^{-1}$, where $L_3$ and $L_4$ denote the representations of $Q_3$ and $Q_4$, respectively.
\end{lemma}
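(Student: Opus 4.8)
The plan is to take for $Q_4$ simply the difference $\Gamma(\widetilde{\mathcal{A}},\lambda_0)-Q_3$ and to show that, on $\ker\widetilde{\mathcal{A}}_{\lambda_0}$, it reduces to a boundary term at $\pm\eta$ which the estimates \eqref{estimates}, \eqref{estimatesII}, \eqref{estimatesIII}, \eqref{normA}, \eqref{normBinv}, \eqref{estimatespsi} have been tailored to dominate. First I would differentiate $\widetilde S_\lambda=\psi_\lambda^\ast J\psi'_\lambda+\psi_\lambda^\ast S_\lambda\psi_\lambda$ with respect to $\lambda$ and evaluate at $\lambda_0$, obtaining $\dot{\widetilde S}_{\lambda_0}=\psi_{\lambda_0}^\ast\dot S_{\lambda_0}\psi_{\lambda_0}+R$, where $R=\dot\psi_{\lambda_0}^\ast J\psi'_{\lambda_0}+\psi_{\lambda_0}^\ast J\dot\psi'_{\lambda_0}+\dot\psi_{\lambda_0}^\ast S_{\lambda_0}\psi_{\lambda_0}+\psi_{\lambda_0}^\ast S_{\lambda_0}\dot\psi_{\lambda_0}$ is symmetric (by \eqref{vanishing} and $S_{\lambda_0}^\ast=S_{\lambda_0}$). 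Since $Q_3[u]=Q_2[B_{\lambda_0}u]=\int^\eta_{-\eta}\langle\psi_{\lambda_0}(t)^\ast\dot S_{\lambda_0}(t)\psi_{\lambda_0}(t)u(t),u(t)\rangle\,dt$, the first summand of $\dot{\widetilde S}_{\lambda_0}$ integrates to $Q_3$, so setting $Q_4[u]:=\int^\eta_{-\eta}\langle R(t)u(t),u(t)\rangle\,dt$ gives $\Gamma(\widetilde{\mathcal{A}},\lambda_0)=Q_3+Q_4$ by construction.

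The step I expect to be the main obstacle is to show that $t\mapsto\langle R(t)u(t),u(t)\rangle$ is an exact $t$-derivative for $u\in\ker\widetilde{\mathcal{A}}_{\lambda_0}$, precisely $\langle R(t)u(t),u(t)\rangle=\frac{d}{dt}\,\omega\!\left(\dot\psi_{\lambda_0}(t)u(t),\psi_{\lambda_0}(t)u(t)\right)$. Expanding both sides (with $u$ smooth, so the $t$-derivatives make sense), the identity should follow from three facts: (i) $w:=B_{\lambda_0}u=\psi_{\lambda_0}u$ solves the original equation $Jw'+S_{\lambda_0}w=0$ on $[-\eta,\eta]$, shown in the proof of Lemma \ref{isoB}, which lets me substitute $S_{\lambda_0}\psi_{\lambda_0}u=-J(\psi'_{\lambda_0}u+\psi_{\lambda_0}u')$ into the two $S$-terms of $R$; (ii) the antisymmetry $J^\ast=-J$; and (iii), decisively, the relation $\omega(\dot\psi_{\lambda_0}(t)a,\psi_{\lambda_0}(t)b)+\omega(\psi_{\lambda_0}(t)a,\dot\psi_{\lambda_0}(t)b)=0$ obtained by differentiating $\omega(\psi_\lambda a,\psi_\lambda b)=\omega(a,b)$ in $\lambda$ (valid since $\psi_\lambda(t)\in\Sp(2n)$), applied with $a=u'(t)$ and $b=u(t)$, which is exactly what cancels the remaining terms containing $u'$. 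Granting this, $Q_4[u]=\omega(\dot\psi_{\lambda_0}(\eta)u(\eta),\psi_{\lambda_0}(\eta)u(\eta))-\omega(\dot\psi_{\lambda_0}(-\eta)u(-\eta),\psi_{\lambda_0}(-\eta)u(-\eta))$, so by \eqref{estimatespsi} one has $|Q_4[u]|\le C_2^2(\|u(\eta)\|^2+\|u(-\eta)\|^2)$.

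It then remains to bound $\|u(\pm\eta)\|$ and chase constants. Writing $\hat w\in\ker\mathcal{A}_{\lambda_0}$ for the extension to $\mathbb{R}$ of $w=\psi_{\lambda_0}u\in U_{\lambda_0}$, one has $\hat w(\pm\eta)=\psi_{\lambda_0}(\pm\eta)u(\pm\eta)$; from \eqref{estimatesIII} and \eqref{normBinv} I get $\|\hat w\|_H\le 2C_2\|u\|_{\widetilde H}$, hence by Lemma \ref{lem:estimatevalueu} and \eqref{estimatespsi} that $\|u(\pm\eta)\|\le 2C_2^2\,m\max_{i}\|u_i(\pm\eta)\|\,\|u\|_{\widetilde H}$; inserting this into the estimate for $|Q_4[u]|$ and using \eqref{estimatesII} yields $|Q_4[u]|<\frac{1}{2C_2^2}\|L_{\mathcal{A}}^{-1}\|^{-1}\|u\|^2_{\widetilde H}$, whence $\|L_4\|\le\frac{1}{2C_2^2}\|L_{\mathcal{A}}^{-1}\|^{-1}$. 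On the other hand $L_3=B_{\lambda_0}^\ast L_2 B_{\lambda_0}$ with $L_2=(A_{\lambda_0}^{-1})^\ast L_1 A_{\lambda_0}^{-1}$, so \eqref{normBinv} and \eqref{normA} give $\|L_3^{-1}\|^{-1}\ge C_2^{-2}\|L_1^{-1}\|^{-1}$, which combined with \eqref{estimates} gives $\|L_3^{-1}\|^{-1}>\frac{1}{2C_2^2}\|L_{\mathcal{A}}^{-1}\|^{-1}\ge\|L_4\|$, the asserted strict inequality. By Lemma \ref{quadraticnondeg} this also yields $\sgn\Gamma(\widetilde{\mathcal{A}},\lambda_0)=\sgn Q_3$, hence $\sfl(\widetilde{\mathcal{A}})=\sfl(\mathcal{A})$.
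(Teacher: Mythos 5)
Your proposal is correct and follows essentially the same route as the paper: the same decomposition $Q_4=\Gamma(\widetilde{\mathcal{A}},\lambda_0)-Q_3$, the same identification of $Q_4$ with the boundary terms $\omega(\dot{\psi}_{\lambda_0}(\pm\eta)u(\pm\eta),\psi_{\lambda_0}(\pm\eta)u(\pm\eta))$ (the paper obtains this by using \eqref{vanishingII}, the equation satisfied by $B_{\lambda_0}u$ and integration by parts, which is the same computation as your exact-derivative identity), and the same chain of estimates via \eqref{estimatespsi}, \eqref{normA}, \eqref{normAinv}, \eqref{normBinv}, Lemma \ref{lem:estimatevalueu}, \eqref{estimatesII}, \eqref{estimates} and the bound $\|L_3^{-1}\|\leq\|L_1^{-1}\|\|B_{\lambda_0}^{-1}\|^2\|A_{\lambda_0}\|^2$.
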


\begin{proof}
From

\begin{align}\label{vanishingII}
\dot{\psi}_{\lambda_0}(t)^\ast J\psi_{\lambda_0}(t)+\psi_{\lambda_0}(t)^\ast J\dot{\psi}_{\lambda_0}(t)=0,\quad t\in[-\eta,\eta],
\end{align}
we obtain for $u\in\ker\widetilde{\mathcal{A}}_{\lambda_0}$ and $t\in[-\eta,\eta]$

\begin{align*}
\langle\dot{\widetilde{S}}_{\lambda_0}(t)u(t),u(t)\rangle&=\langle \dot \psi_{\lambda_0}(t)^\ast J\psi'_{\lambda_0}(t)u(t)+\psi_{\lambda_0}(t)^\ast J\dot \psi'_{\lambda_0}(t)u(t),u(t)\rangle +\langle \dot \psi_{\lambda_0}(t)^\ast S_{\lambda_0}(t)\psi_{\lambda_0}(t)u(t)\\
&+\psi_{\lambda_0}(t)^\ast\dot S_{\lambda_0}(t)\psi_{\lambda_0}(t)u(t)+\psi_{\lambda_0}(t)^\ast S_{\lambda_0}(t)\dot \psi_{\lambda_0}(t)u(t) ,u(t)\rangle\\
&=\langle \psi_{\lambda_0}(t)^\ast\dot S_{\lambda_0}(t)\psi_{\lambda_0}(t)u(t),u(t)\rangle\\
&+\langle\dot{\psi}_{\lambda_0}(t)^\ast J\psi'_{\lambda_0}(t) u(t)+\dot{\psi}_{\lambda_0}(t)^\ast J\psi_{\lambda_0}(t) u'(t)+\dot \psi_{\lambda_0}(t)^\ast S_{\lambda_0}(t)\psi_{\lambda_0}(t)u(t),u(t) \rangle\\
&+\langle \psi_{\lambda_0}(t)^\ast J\dot{\psi}'_{\lambda_0}(t) u(t)+\psi_{\lambda_0}(t)^\ast J\dot{\psi}_{\lambda_0}(t) u'(t)+ \psi_{\lambda_0}(t)^\ast S_{\lambda_0}(t)\dot \psi_{\lambda_0}(t)u(t),u(t) \rangle\\
&=\langle \dot S_{\lambda_0}(t)\psi_{\lambda_0}(t)u(t),\psi_{\lambda_0}(t)u(t)\rangle+\langle J(B_{\lambda_0}u)'(t)+ S_{\lambda_0}(t) (B_{\lambda_0}u)(t),\dot{\psi}_{\lambda_0}u(t) \rangle\\
&+\langle  J(\dot{\psi}_{\lambda_0}u)'(t)+ S_{\lambda_0}(t)(\dot\psi_{\lambda_0}(t)u(t)),\psi_{\lambda_0}(t)u(t) \rangle.
\end{align*}  
Since the second term vanishes by the previous lemma, we obtain 

\begin{align*}
\Gamma(\widetilde{\mathcal{A}},\lambda_0)[u]&=Q_3[u]+\int^{\eta}_{-\eta}{\langle  J(\dot{\psi}_{\lambda_0}u)'(t)+ S_{\lambda_0}(t)\dot\psi_{\lambda_0}(t)u(t),\psi_{\lambda_0}(t)u(t) \rangle\,dt}\\
&=Q_3[u]+\int^{\eta}_{-\eta}{\langle  \dot{\psi}_{\lambda_0}(t)u(t),J(\psi_{\lambda_0}u)'(t)+S_{\lambda_0}(t)\psi_{\lambda_0}(t)u(t) \rangle\,dt}\\
&-\langle\dot{\psi}_{\lambda_0}(\eta)u(\eta),J\psi_{\lambda_0}(\eta)u(\eta)\rangle+\langle\dot{\psi}_{\lambda_0}(-\eta)u(-\eta),J\psi_{\lambda_0}(-\eta)u(-\eta)\rangle\\
&=Q_3[u]-\langle\dot{\psi}_{\lambda_0}(\eta)u(\eta),J\psi_{\lambda_0}(\eta)u(\eta)\rangle+\langle\dot{\psi}_{\lambda_0}(-\eta)u(-\eta),J\psi_{\lambda_0}(-\eta)u(-\eta)\rangle.
\end{align*}
Hence it remains to show that the representing operator $L_4$ of

\[Q_4:\ker\widetilde{\mathcal{A}}_{\lambda_0}\rightarrow\mathbb{R},\quad Q_4[u]=-\langle\dot{\psi}_{\lambda_0}(\eta)u(\eta),J\psi_{\lambda_0}(\eta)u(\eta)\rangle+\langle\dot{\psi}_{\lambda_0}(-\eta)u(-\eta),J\psi_{\lambda_0}(-\eta)u(-\eta)\rangle\]
has the required bound.\\
We first estimate $\|L^{-1}_3\|^{-1}$ from below, where $L_3$ denotes the representing operator of the quadratic form $Q_3$, i.e. the uniquely determined selfadjoint operator on $\ker\widetilde{\mathcal{A}}_{\lambda_0}$ such that $Q_3(u)=\langle L_3u,u\rangle_{L^2([-\eta,\eta],\mathbb{R}^{2n})}$. From $Q_3[u]=Q_1[A^{-1}_{\lambda_0}B_{\lambda_0}u]$, $u\in\ker\widetilde{\mathcal{A}}_{\lambda_0}$, we see that $L_3=(A^{-1}_{\lambda_0}B_{\lambda_0})^\ast L_1 A^{-1}_{\lambda_0}B_{\lambda_0}$, where $L_1:\ker\mathcal{A}_{\lambda_0}\rightarrow\ker\mathcal{A}_{\lambda_0}$ denotes the representing operator of $Q_1$. In particular, 

\begin{align}\label{bound}
\|L^{-1}_3\|\leq\|L^{-1}_1\|\|B^{-1}_{\lambda_0}\|^2\|A_{\lambda_0}\|^2
\end{align}
and the inverse of the right hand side is a lower bound for $\|L^{-1}_3\|^{-1}$.\\
Let us now estimate the norm of $L_4$. We obtain

\begin{align}\label{estimateQ4}
\begin{split}
|Q_4[u]|&\leq |\langle\dot{\psi}_{\lambda_0}(\eta)u(\eta),J\psi_{\lambda_0}(\eta)u(\eta)\rangle|+|\langle\dot{\psi}_{\lambda_0}(-\eta)u(-\eta),J\psi_{\lambda_0}(-\eta)u(-\eta)\rangle|\\
&=|\langle \psi_{\lambda_0}(\eta)^\ast J\dot{\psi}_{\lambda_0}(\eta)u(\eta),u(\eta)\rangle|+|\langle\psi_{\lambda_0}(-\eta)^\ast J\dot{\psi}_{\lambda_0}(-\eta)u(-\eta),u(-\eta)\rangle|\\
&\leq\|\psi_{\lambda_0}(\eta)^\ast J\dot{\psi}_{\lambda_0}(\eta)\|\|u(\eta)\|^2+\|\psi_{\lambda_0}(-\eta)^\ast J\dot{\psi}_{\lambda_0}(-\eta)\|\|u(-\eta)\|^2\\
&\leq C^2_2(\|u(\eta)\|^2+\|u(-\eta)\|^2), 
\end{split}
\end{align}
where we have used \eqref{estimatespsi} in the last inequality. Let us now assume that $\|u\|_H=1$ and take $\tilde{u}\in\ker\mathcal{A}_{\lambda_0}$ such that $B^{-1}_{\lambda_0}A_{\lambda_0}\tilde{u}=u$.
We obtain from \eqref{normAinv} and \eqref{normBinv}

\begin{align*}
\|\tilde{u}\|_H\leq \|B_{\lambda_0}\|\|A^{-1}_{\lambda_0}\|\|u\|_{L^2([-\eta,\eta],\mathbb{R}^{2n})}\leq 2C_2,
\end{align*}
which implies that

\begin{align*}
\|u(\pm\eta)\|&\leq \|\psi_{\lambda_0}(\pm\eta)^{-1}\tilde{u}(\pm\eta)\|\leq C_2\|\tilde{u}(\pm\eta)\|\leq mC_2\max_{i=1,\ldots m}\|u_i(\pm\eta)\|\,\|\tilde{u}\|_H\\
&\leq 2mC^2_2 \max_{i=1,\ldots m}\|u_i(\pm\eta)\|,
\end{align*}
by \eqref{estimatespsi} and Lemma \ref{lem:estimatevalueu}. Finally, we use \eqref{estimates}, \eqref{estimatesII}, \eqref{normA}, \eqref{normBinv} and \eqref{bound} to obtain from \eqref{estimateQ4} and the previous inequality

\begin{align*}
\|L_4\|&\leq8m^2C^6_2\max\|u_i(\pm\eta)\|^2<\frac{1}{2C^2_2}\|L^{-1}_{\mathcal{A}}\|^{-1}\\
&\leq\frac{1}{C^2_2} \|L^{-1}_1\|^{-1}\leq\frac{1}{\|L^{-1}_1\|\|B^{-1}_{\lambda_0}\|^2\|A_{\lambda_0}\|^2}\leq \|L^{-1}_3\|^{-1}.
\end{align*} 
\end{proof}

Consequently, we now need to compute the spectral flow of $\widetilde{\mathcal{A}}$ which is the topic of the next two steps.


\subsubsection*{Step 6: The operators $\overline{\mathcal{A}}$.}

Since $\Sp(2n)$ is a connected Lie group, there is a smooth map $\varphi:[-\eta,\eta]\rightarrow\Sp(2n)$ such that $\varphi(-\eta)^{-1}\ell_1(\eta)=\varphi(\eta)^{-1}\ell_2(\eta)=\{0\}\times\mathbb{R}^n$. We define

\[\overline{W}:=\{u\in H^1([-\eta,\eta],\mathbb{R}^{2n}):\, u(-\eta),u(\eta)\in\{0\}\times\mathbb{R}^n\},\]
\[\overline{S}_\lambda(t):=\varphi(t)^\ast J\varphi'(t)+\varphi(t)^\ast\widetilde{S}_\lambda(t)\varphi(t)\]
and note that $\overline{S}$ is a family of symmetric matrices (cf. \eqref{vanishing}). We now consider the family of selfadjoint Fredholm operators

\[\overline{\mathcal{A}}_\lambda:\overline{W}\rightarrow L^2([-\eta,\eta],\mathbb{R}^{2n}),\quad u\mapsto Ju'+\overline{S}_\lambda(t)u.\]
As in Lemma \ref{isoB}, it is readily seen that for every $\lambda\in [-\eta,\eta]$, the map

\[C_\lambda:\ker\overline{\mathcal{A}}_\lambda\rightarrow L^2([-\eta,\eta],\mathbb{R}^{2n}),\quad u\mapsto \varphi_\lambda u\]
is an isomorphism onto $\ker\widetilde{\mathcal{A}}_\lambda$, and accordingly, the only crossing of the path $\overline{\mathcal{A}}$ is $\lambda_0$. Since $\varphi$ does not depend on the parameter $\lambda$, we obtain for the corresponding crossing form

\begin{align*}
\Gamma(\overline{\mathcal{A}},\lambda_0)[u]&=\int^\eta_{-\eta}{\left\langle \dot{\overline{S}}_{\lambda_0}(t)u(t),u(t)\right\rangle dt}=\int^\eta_{-\eta}{\left\langle\varphi(t)^\ast \dot{\widetilde{S}}_{\lambda_0}(t)\varphi(t)u(t),u(t)\right\rangle dt}\\
&=\int^\eta_{-\eta}{\left\langle\dot{\widetilde{S}}_{\lambda_0}(t)\varphi(t)u(t),\varphi(t)u(t)\right\rangle dt}=\Gamma(\widetilde{\mathcal{A}},\lambda_0)[C_{\lambda_0} u],\quad u\in\ker\overline{\mathcal{A}}_{\lambda_0},
\end{align*}
and consequently,

\[\sfl(\widetilde{\mathcal{A}})=\sfl(\overline{\mathcal{A}}).\]


\subsubsection*{Step 7: The spectral flow of $\overline{\mathcal{A}}$}

We let $\overline{\Psi}_\lambda:[-\eta,\eta]\rightarrow\Sp(2n)$ be the solution of the initial value problem

\begin{equation}\label{overlinepsi}
\left\{
\begin{aligned}
J\overline{\Psi}'_{\lambda}(t)+\overline{S}_\lambda(t)\overline{\Psi}_{\lambda}(t)&=0,\quad t\in[-\eta,\eta]\\
\overline{\Psi}_{\lambda}(-\eta)&=I_{2n},
\end{aligned}
\right.
\end{equation}
and we write

\begin{align*}
\overline{\Psi}_\lambda(t)=\begin{pmatrix}
a_\lambda(t)&b_\lambda(t)\\
c_\lambda(t)&d_\lambda(t)	
\end{pmatrix}\in\Sp(2n),\quad t\in[-\eta,\eta].
\end{align*}
The straightforward proof of the following lemma is left to the reader.

\begin{lemma}\label{kernelMaslov}
$u\in\ker\overline{\mathcal{A}}_\lambda$ if and only if there exists $v\in\ker b_\lambda(\eta)$ such that $u(t)=\overline{\Psi}_\lambda(t)(0,v)$, $t\in[-\eta,\eta]$.
\end{lemma}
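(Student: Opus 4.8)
The statement is an elementary characterization of the kernel of the boundary value operator $\overline{\mathcal{A}}_\lambda$ in terms of the fundamental solution $\overline{\Psi}_\lambda$, so the plan is simply to unwind the definitions. First I would recall that $\overline{W}$ consists of those $u\in H^1([-\eta,\eta],\mathbb{R}^{2n})$ satisfying $u(-\eta),u(\eta)\in\{0\}\times\mathbb{R}^n$, and that $u\in\ker\overline{\mathcal{A}}_\lambda$ means precisely that $u$ solves $Ju'(t)+\overline{S}_\lambda(t)u(t)=0$ on $[-\eta,\eta]$ together with these two boundary conditions. By the standard existence and uniqueness theory for linear ODEs, every solution of $Ju'+\overline{S}_\lambda u=0$ on $[-\eta,\eta]$ is of the form $u(t)=\overline{\Psi}_\lambda(t)\,u(-\eta)$, since $\overline{\Psi}_\lambda$ is by definition the fundamental matrix normalized by $\overline{\Psi}_\lambda(-\eta)=I_{2n}$; moreover such a $u$ automatically lies in $H^1$ (indeed is smooth).

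Next I would impose the two boundary conditions on $u(t)=\overline{\Psi}_\lambda(t)w$ with $w:=u(-\eta)$. The condition $u(-\eta)\in\{0\}\times\mathbb{R}^n$ forces $w=(0,v)$ for some $v\in\mathbb{R}^n$, so that $u(t)=\overline{\Psi}_\lambda(t)(0,v)$. The condition $u(\eta)\in\{0\}\times\mathbb{R}^n$ then reads: the first $\mathbb{R}^n$-component of $\overline{\Psi}_\lambda(\eta)(0,v)$ vanishes. Writing $\overline{\Psi}_\lambda(\eta)=\begin{pmatrix} a_\lambda(\eta)&b_\lambda(\eta)\\ c_\lambda(\eta)&d_\lambda(\eta)\end{pmatrix}$ in the block form already introduced, one computes $\overline{\Psi}_\lambda(\eta)(0,v)=(b_\lambda(\eta)v,\,d_\lambda(\eta)v)$, so the first component vanishes exactly when $b_\lambda(\eta)v=0$, i.e. $v\in\ker b_\lambda(\eta)$. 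Conversely, given any $v\in\ker b_\lambda(\eta)$, the function $u(t):=\overline{\Psi}_\lambda(t)(0,v)$ solves the ODE, satisfies $u(-\eta)=(0,v)\in\{0\}\times\mathbb{R}^n$ and $u(\eta)=(0,d_\lambda(\eta)v)\in\{0\}\times\mathbb{R}^n$, hence lies in $\ker\overline{\mathcal{A}}_\lambda$. This establishes the claimed equivalence in both directions.

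There is essentially no obstacle here; the only points requiring a word of care are that the transformation from $\widetilde{\mathcal{A}}$ to $\overline{\mathcal{A}}$ in Step 6 was designed so that $\varphi(\pm\eta)^{-1}\ell_i(\eta)=\{0\}\times\mathbb{R}^n$, which is exactly what makes the boundary space $\overline{W}$ take the simple form $\{0\}\times\mathbb{R}^n$ at both endpoints used above, and that $H^1$-regularity of solutions on a compact interval is automatic. This is why the author leaves the proof to the reader; the content of the lemma is that kernel elements of $\overline{\mathcal{A}}_\lambda$ are parametrized by $\ker b_\lambda(\eta)$, which is precisely the data entering the explicit crossing-form formula \eqref{MaslovSympMat} for the Maslov index of the path $\lambda\mapsto\overline{\Psi}_\lambda(\eta)(\{0\}\times\mathbb{R}^n)$, and this connection is what the subsequent step will exploit.
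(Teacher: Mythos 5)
Your proof is correct and is precisely the straightforward argument the paper signals by leaving the lemma "to the reader": parametrize solutions by $u(t)=\overline{\Psi}_\lambda(t)u(-\eta)$ via the fundamental matrix normalized at $-\eta$, the boundary condition $u(-\eta)\in\{0\}\times\mathbb{R}^n$ forces $u(-\eta)=(0,v)$, and the boundary condition at $\eta$ then reduces in block form to $b_\lambda(\eta)v=0$. Nothing further to add.
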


In particular, we see from

\[\overline{\Psi}_\lambda(\eta)(\{0\}\times\mathbb{R}^n)\cap(\{0\}\times\mathbb{R}^n)=\{(0,d_\lambda(\eta)v):\, v\in\ker b_\lambda(\eta)\},\]
that $\overline{\Psi}_\lambda(\eta)(\{0\}\times\mathbb{R}^n)\cap(\{0\}\times\mathbb{R}^n)\neq\{0\}$ if and only if $\lambda=\lambda_0$. Hence, if $\lambda_0$ turns out to be a regular crossing of the curve $\overline{\Psi}_\cdot(\eta)(\{0\}\times\mathbb{R}^n)$ with respect to $\{0\}\times\mathbb{R}^n$, then the corresponding Maslov index is given by the signature of the crossing form $\Gamma(\Psi_\cdot(\eta)(\{0\}\times\mathbb{R}^n),\{0\}\times\mathbb{R}^n,\lambda_0)$.\\
The subsequent computation follows Lemma 7.2 in \cite{Robbin-Salamon}: From the identity

\[J\overline{\Psi}'_\lambda(t)+\overline{S}_\lambda(t)\overline{\Psi}_\lambda(t)=0,\quad t\in[-\eta,\eta],\quad \lambda\in I,\]
we infer the two equalities

\begin{align}\label{equI}
\overline{\Psi}'(t)^\ast J=\overline{\Psi}(t)^\ast\overline{S}_\lambda(t)
\end{align}
and

\begin{align}\label{equII}
\dot{\overline{S}}_\lambda(t)\overline{\Psi}_\lambda(t)+\overline{S}_\lambda(t)\dot{\overline{\Psi}}_\lambda(t)=-J\dot{\overline{\Psi}}'_\lambda(t).
\end{align}
If we multiply \eqref{equII} on the left by $\overline{\Psi}_\lambda(t)^\ast$ and integrate over $[-\eta,\eta]$, we obtain

\begin{align*}
\int^\eta_{-\eta}{\overline{\Psi}_\lambda(t)^\ast\dot{\overline{S}}_\lambda(t)\overline{\Psi}_\lambda(t)\, dt}&=-\int^\eta_{-\eta}{\overline{\Psi}_\lambda(t)^\ast J\dot{\overline{\Psi}}'_\lambda(t)\, dt}-\int^\eta_{-\eta}{\overline{\Psi}_\lambda(t)^\ast \overline{S}_\lambda(t)\dot{\overline{\Psi}}_\lambda(t)\, dt}\\
&=-\int^\eta_{-\eta}{\overline{\Psi}_\lambda(t)^\ast J\dot{\overline{\Psi}}'_\lambda(t)\, dt}-\int^\eta_{-\eta}{\overline{\Psi}'_\lambda(t)^\ast J\dot{\overline{\Psi}}_\lambda(t)\, dt}\\
&=-\overline{\Psi}_\lambda(\eta)^\ast J\dot{\overline{\Psi}}_\lambda(\eta)-\overline{\Psi}_\lambda(-\eta)^\ast J\dot{\overline{\Psi}}_\lambda(-\eta)\\
&=-\overline{\Psi}_\lambda(\eta)^\ast J\dot{\overline{\Psi}}_\lambda(\eta),
\end{align*}
where we have used in the last equality that $\dot{\overline{\Psi}}_\lambda(-\eta)=0$ since $\overline{\Psi}_\lambda(-\eta)$ is constant.\\
Let us now take $u\in\ker\overline{\mathcal{A}}_{\lambda_0}$ and $v\in\ker b_{\lambda_0}(\eta)$ such that $u(t)=\overline{\Psi}_{\lambda_0}(t)(0,v)$, $t\in[-\eta,\eta]$, as in Lemma \ref{kernelMaslov}. We obtain

\begin{align*}
\Gamma(\overline{\mathcal{A}},\lambda_0)[u]&=\int^\eta_{-\eta}{\left\langle \dot{\overline{S}}_{\lambda_0}(t)\overline{\Psi}_{\lambda_0}(t)(0,v),\overline{\Psi}_{\lambda_0}(t)(0,v)\right\rangle dt}\\
&=\int^\eta_{-\eta}{\left\langle\overline{\Psi}_{\lambda_0}(t)^\ast \dot{\overline{S}}_{\lambda_0}(t)\overline{\Psi}_{\lambda_0}(t)(0,v),(0,v)\right\rangle dt}\\
&=-\left\langle\overline{\Psi}_{\lambda_0}(\eta)^\ast J\dot{\overline{\Psi}}_{\lambda_0}(\eta)(0,v),(0,v)\right\rangle=-\left\langle J\dot{\overline{\Psi}}_{\lambda_0}(\eta)(0,v),\overline{\Psi}_{\lambda_0}(\eta)(0,v)\right\rangle\\
&=\langle\dot{d}_{\lambda_0}(\eta)v,b_{\lambda_0}(\eta)v\rangle-\langle d_{\lambda_0}(\eta)v,\dot b_{\lambda_0}(\eta)v\rangle=-\langle d_{\lambda_0}(\eta)v,\dot b_{\lambda_0}(\eta)v\rangle.
\end{align*}
It follows from Lemma \ref{kernelMaslov} that the right hand side is a non-degenerate quadratic form on $\ker b_{\lambda_0}$, and by \eqref{MaslovSympMat}, its signature is the Maslov index of the curve $\overline{\Psi}_\cdot(\{0\}\times\mathbb{R}^n)$ relative to $\{0\}\times\mathbb{R}^n$. Thus we have shown that

\[\sfl(\overline{\mathcal{A}})=\mu_{Mas}(\overline{\Psi}_\cdot(\eta)(\{0\}\times\mathbb{R}^n),\{0\}\times\mathbb{R}^n).\]


\subsubsection*{Step 8: The final argument}
We first note that it is easily seen from 

\[(\varphi^{-1}(t))'=-\varphi(t)^{-1}\varphi'(t)\varphi(t)^{-1}\quad \text{and}\quad \varphi(t)^{-1}=-J\varphi(t)^\ast J,\quad t\in[-\eta,\eta],\]
that the fundamental matrices $\widetilde{\Psi}$ introduced in \eqref{tildepsi} and $\overline{\Psi}$ introduced in \eqref{overlinepsi} are related by

\[\overline{\Psi}_\lambda(t)=\varphi(t)^{-1}\widetilde{\Psi}_\lambda(t)\varphi(-\eta),\quad t\in[-\eta,\eta],\,\,\lambda\in I.\]
We obtain from \eqref{Maslovnatural}

\begin{align*}
\mu_{mas}(\overline{\Psi}_\cdot(\eta)(\{0\}\times\mathbb{R}^n),\{0\}\times\mathbb{R}^n)&=\mu_{mas}(\varphi(\eta)^{-1}\widetilde{\Psi}_\cdot(\eta){\varphi}(-\eta)(\{0\}\times\mathbb{R}^n),\{0\}\times\mathbb{R}^n)\\
&=\mu_{mas}(\widetilde{\Psi}_\cdot(\eta)\ell_1(\eta),\ell_2(\eta)).
\end{align*}
Analogously, we have for $\widetilde{\Psi}$ and $\Psi$

\[\widetilde{\Psi}_\lambda(t)=\psi^{-1}_\lambda(t)\Psi_{(\lambda,-\eta)}(t)\psi_\lambda(-\eta),\quad \lambda\in I,\,\,t\in[-\eta,\eta],\]	
and we conclude by using \eqref{Maslovnatural} again that

\begin{align*}
\mu_{mas}(\widetilde\Psi_\cdot(\eta)\ell_1(\eta),\ell_2(\eta))&=\mu_{mas}(\Psi_{(\cdot,-\eta)}(\eta) \psi_\cdot(-\eta)\ell_1(\eta),\psi_\cdot(\eta)\ell_2(\eta))\\
&=\mu_{mas}(\Psi_{(\cdot,-\eta)}(\eta) E^u_\cdot(-\eta),E^s_\cdot(\eta)).
\end{align*}
Finally, Theorem \ref{main} follows from

\[\Psi_{(\lambda,-\eta)}(\eta) E^u_\lambda(-\eta)=E^u_\lambda(\eta),\quad\lambda\in I,\]
and

\[\mu_{mas}(E^u_\cdot(\eta),E^s_\cdot(\eta))=\mu_{mas}(E^u_\cdot(0),E^s_\cdot(0)),\]
which was shown in Lemma \ref{Maslovconstant}.


\section{Application to bifurcation of homoclinic solutions}\label{sect:bifurcation}

Let $\mathcal{H}:I\times\mathbb{R}\times\mathbb{R}^{2n}\rightarrow\mathbb{R}$ be a continuous map such that $\mathcal{H}_\lambda:=\mathcal{H}(\lambda,\cdot,\cdot):\mathbb{R}\times\mathbb{R}^{2n}\rightarrow\mathbb{R}$ is $C^2$ for all $\lambda\in I$ and its derivatives depend continuously on $\lambda\in I$. We consider the family of Hamiltonian systems

\begin{equation}\label{Hamiltoniannonlin}
\left\{
\begin{aligned}
Ju'(t)+\nabla_u \mathcal{H}_\lambda(t,u(t))&=0,\quad t\in\mathbb{R}\\
\lim_{t\rightarrow\pm\infty}u(t)&=0,
\end{aligned}
\right.
\end{equation}
where $\nabla_u$ denotes the gradient with respect to the variable $u\in\mathbb{R}^{2n}$, and $J$ is the standard symplectic matrix \eqref{J}. In what follows, we assume that

\begin{align}\label{Hgrowth}
\mathcal{H}_\lambda(t,u)=\frac{1}{2}\langle S_\lambda(t)u,u\rangle+G(\lambda,t,u),
\end{align}
where $S:I\times\mathbb{R}\rightarrow\mathcal{S}(\mathbb{R}^{2n})$ is a smooth family of symmetric matrices as in Section \ref{sect:main}, $G(\lambda,t,u)$ vanishes up to second order at $u=0$, and there are $p>0$, $C\geq 0$ and $g\in H^1(\mathbb{R},\mathbb{R})$ such that

\[|D^2_uG(\lambda,t,u)|\leq g(t)+C|u|^p.\]

Note that in particular $\nabla_u \mathcal{H}_\lambda(t,0)=0$ for all $(\lambda,t)\in I\times\mathbb{R}$, so that $u\equiv 0$ is a solution of \eqref{Hamiltoniannonlin} for all $\lambda\in I$. By definition, \textit{homoclinics} are solutions $(\lambda,u)$ of \eqref{Hamiltoniannonlin} such that $u\not\equiv 0$.\\
The linearisation of \eqref{Hamiltoniannonlin} at the trivial solution $u\equiv 0$ is

\begin{equation}\label{Hamiltonianlin}
\left\{
\begin{aligned}
Ju'(t)+S_\lambda(t)u(t)&=0,\quad t\in\mathbb{R}\\
\lim_{t\rightarrow\pm\infty}u(t)&=0,
\end{aligned}
\right.
\end{equation}
where $S_\lambda(t)$ is the Hessian of $\mathcal{H}_\lambda(t,\cdot)$ at the critical point $0\in\mathbb{R}^{2n}$ according to \eqref{Hgrowth}. In what follows, we assume without further reference that the family $S_\lambda$ has limits for $t\rightarrow\pm\infty$ as in Section \ref{sect:main}.\\
Let $C^1_0(\mathbb{R},\mathbb{R}^{2n})$ be the Banach space of all continuously differentiable $\mathbb{R}^{2n}$-valued functions $u$ such that $u$ and $u'$ vanish at infinity, where the norm is defined by

\[\|u\|=\sup_{t\in\mathbb{R}}|u(t)|+\sup_{t\in\mathbb{R}}|u'(t)|.\]

\begin{defi}\label{defibif}
We call $\lambda^\ast\in I$ a bifurcation point for homoclinic solutions from the stationary branch if every neighbourhood of $(\lambda^\ast,0)\in I\times C^1_0(\mathbb{R},\mathbb{R}^{2n})$ contains a non-trivial solution $(\lambda,u)$ of \eqref{Hamiltoniannonlin}.
\end{defi}

We denote by $\Sigma$ the set of all $\lambda\in I$ such that the linearised equation \eqref{Hamiltonianlin} has a non-trivial solution. We will explain below the obvious fact that the set of all bifurcation points is contained in $\Sigma$.
The main theorem of this section reads as follows:

\begin{theorem}\label{bif}
Assume that $\mathcal{H}$ is of the form \eqref{Hgrowth} and that A1) and A2) hold for the linearised equation \eqref{Hamiltonianlin}. Let $E^s_\lambda(0)$ and $E^u_\lambda(0)$, $\lambda\in I$, denote the stable and unstable subspaces of \eqref{Hamiltonianlin}. 

\begin{itemize}
	\item[i)] If $\mu_{mas}(E^u_\cdot(0),E^s_\cdot(0))\neq 0$, then there exists at least one bifurcation point $\lambda^\ast\in(0,1)$ for homoclinic solutions of \eqref{Hamiltoniannonlin}.
	\item[ii)] If $\Sigma$ is finite, then there are at least
	
	\[\left\lfloor\frac{|\mu_{mas}(E^u_\cdot(0),E^s_\cdot(0))|}{n}\right\rfloor\]
	distinct bifurcation points in $(0,1)$, where $\lfloor\cdot\rfloor$ denotes the integral part of a real number.   
\end{itemize}
\end{theorem}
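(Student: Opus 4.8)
The plan is to deduce the bifurcation statement from the abstract bifurcation theorem of \cite{JacBifIch} for families of strongly indefinite functionals, using our spectral flow formula (Theorem \ref{main}) to convert the analytic hypothesis of that theorem into the geometric condition on the Maslov index. First I would set up the variational framework: under the growth condition \eqref{Hgrowth} the homoclinic solutions of \eqref{Hamiltoniannonlin} are exactly the critical points of a $C^2$-family of functionals $f_\lambda$ on $E=H^{1/2}(\mathbb{R},\mathbb{R}^{2n})$ (or the appropriate Sobolev space on which the Hamiltonian action is defined), of the form $f_\lambda(u)=\frac12\langle L_\lambda u,u\rangle + \text{(compact/lower order)}$, with $0$ a critical point for every $\lambda$ because $\nabla_u\mathcal H_\lambda(t,0)=0$. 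The Hessian of $f_\lambda$ at $0$ is a selfadjoint Fredholm operator whose restriction to the relevant spaces is, up to a compact perturbation and a cogredient transformation, the operator $\mathcal A_\lambda$ from \eqref{diffop}; in particular the kernel of the Hessian at $0$ is isomorphic to the space of solutions of the linearised equation \eqref{Hamiltonianlin}, and $\Sigma$ is precisely the set of $\lambda$ at which $0$ is a degenerate critical point. This also explains the ``obvious fact'' that bifurcation points lie in $\Sigma$: if $0$ is a nondegenerate critical point of $f_{\lambda^\ast}$ then by the implicit function theorem (or a Lyapunov--Schmidt reduction) $0$ is isolated in the solution set uniformly near $\lambda^\ast$, so $\lambda^\ast$ is not a bifurcation point.

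Next I would invoke the abstract result: the bifurcation theorem from \cite{JacBifIch} states that if the spectral flow of the path of Hessians $\lambda\mapsto D^2f_\lambda(0)$ across $[0,1]$ is nonzero, then there is a bifurcation point in $(0,1)$; and, when $\Sigma$ is finite, it gives the quantitative lower bound $\lfloor |\sfl|/k\rfloor$ where $k$ is a bound on the dimension of the kernels at the crossings. By A1) and A2) the endpoints are nondegenerate, so the spectral flow is well defined, and by Theorem \ref{main} it equals $\mu_{Mas}(E^u_\cdot(0),E^s_\cdot(0))$. For part i) this immediately yields a bifurcation point whenever the Maslov index is nonzero. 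For part ii), the extra input is that at each $\lambda\in\Sigma$ the kernel of the Hessian at $0$ is isomorphic (via \eqref{evaliso}) to $E^u_\lambda(0)\cap E^s_\lambda(0)$, which is an intersection of two Lagrangian subspaces of $\mathbb{R}^{2n}$ and hence has dimension at most $n$; this is exactly the bound $k=n$ needed to feed into the abstract multiplicity estimate, giving the floor of $|\mu_{Mas}|/n$ distinct bifurcation points.

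I expect the main obstacle to be the careful identification of the variational setting with the operator-theoretic one: one must choose the right function space on which the quadratic form $u\mapsto\int \langle S_\lambda u,u\rangle$ together with the symplectic term is well behaved, verify that $G$ contributes only a $C^2$ compact (or at least admissible) perturbation so that the hypotheses of \cite{JacBifIch} are met, and check that the spectral flow of the path of Hessians in that space agrees with $\sfl(\mathcal A)$ as defined in Section \ref{sect:sfl} — this last point being a compatibility statement between two a priori different notions of spectral flow, which should follow from the homotopy invariance and the fact that the two operator families differ by a path of compact perturbations with invertible endpoints. Once this dictionary is in place, parts i) and ii) are formal consequences of the abstract theorem together with Theorem \ref{main} and the Lagrangian dimension bound; the convergence/decay estimates already established (e.g. \eqref{decay}) and the regularity of homoclinics take care of the passage between the $C^1_0$-topology in Definition \ref{defibif} and the Hilbert-space topology used in the abstract bifurcation theorem.
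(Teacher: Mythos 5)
Your plan matches the paper's strategy at every structural point: the variational set-up on $H^{1/2}(\mathbb{R},\mathbb{R}^{2n})$ with functionals $f_\lambda(u)=b_\lambda(u,u)+\int G(\lambda,t,u)\,dt$; the appeal to the abstract bifurcation and multiplicity theorems of \cite{JacBifIch} for paths of Hessians; the use of Theorem~\ref{main} to replace the spectral flow hypothesis by the Maslov index; the observation that bifurcation points lie in $\Sigma$ because nondegeneracy of the Hessian isolates the trivial branch; and the bound $\dim\ker L_\lambda\leq n$ from \eqref{evaliso} and the fact that $E^u_\lambda(0),E^s_\lambda(0)$ are Lagrangian, which feeds the quantitative part~ii). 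You also correctly flag the genuinely nontrivial step, namely reconciling the spectral flow $\sfl(L)$ of the path of bounded Hessians $L_\lambda\in\mathcal{FS}(H^{1/2})$ with the spectral flow $\sfl(\mathcal{A})$ of the unbounded path $\mathcal{A}_\lambda\in\mathcal{FS}(H^1,L^2)$.

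However, the justification you propose for that step does not hold. You say the equality $\sfl(L)=\sfl(\mathcal{A})$ ``should follow from \ldots the fact that the two operator families differ by a path of compact perturbations with invertible endpoints.'' The two families live in different operator spaces, so ``differ by a compact perturbation'' is not even well posed; and the actual relation between them is not additive. Writing $B\in\mathcal{S}(H^{1/2})$ for the operator with $\langle u,v\rangle_{L^2}=\langle Bu,v\rangle_{H^{1/2}}$, the defining identity $\langle L_\lambda u,v\rangle_{H^{1/2}}=\langle\mathcal{A}_\lambda u,v\rangle_{L^2}$ shows that $L_\lambda$ and $\mathcal{A}_\lambda$ are \emph{cogredient} through $B$, not related by a compact correction term. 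The paper closes this gap with the crossing-form technology of Section~\ref{sect:sfl}: one perturbs both paths simultaneously by $\delta$ (replacing $\mathcal{A}$ by $\mathcal{A}+\delta I_H$ and $L$ by $L+\delta B$) so that, by Theorem~\ref{thm:Sard}, only regular crossings occur, verifies from the bilinear form identity that the two perturbed paths have identical kernels $\ker L^\delta_\lambda=\ker\mathcal{A}^\delta_\lambda$ and identical crossing forms $\Gamma(L^\delta,\lambda)=\Gamma(\mathcal{A}^\delta,\lambda)$, and then applies Theorem~\ref{thm:crossing} to conclude equality of the spectral flows. (The paper also mentions an alternative using Theorem~2.6 of \cite{Hamiltonian}.) You would need to replace your ``compact perturbation'' heuristic with one of these mechanisms; the rest of the argument then goes through as you describe.
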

Let us point out that $\Sigma$ is finite, for example, if the corresponding path $\mathcal{A}$ in \eqref{diffop} has only regular crossings.\\
The proof of Theorem \ref{bif} weakens the assumption on the differentiability of the Hamiltonian $\mathcal{H}$ from Theorem 2.2 in \cite{Jacobo} by using the recent work \cite{JacBifIch} of Pejsachowicz and the author. Concretely, here we merely suppose that $\mathcal{H}$ depends continuously on the parameter $\lambda$, whereas this was required to be smooth in \cite{Jacobo}.\\
Let us first recall the main theorem of \cite{JacBifIch}. Let $H$ be a separable Hilbert space and $f:I\times H\rightarrow\mathbb{R}$ a continuous function such that each $f_\lambda:=f(\lambda,\cdot):H\rightarrow\mathbb{R}$ is $C^2$ and its first and second derivatives depend continuously on $\lambda\in I$. In what follows, we assume that $0\in H$ is a critical point of all $f_\lambda$, $\lambda\in I$. We call $\lambda\in I$ a bifurcation point of critical points of $f$ if every neighbourhood of $I\times\{0\}$ in $I\times H$ contains elements $(\lambda,u)$ such that $u\neq 0$ is a critical point of $f_\lambda$. The second derivatives $D^2_0f_\lambda$ at the critical point $0\in H$ are bounded symmetric bilinear forms and there exists a unique continuous path of selfadjoint operators $L$ on $H$ such that 

\begin{align}\label{Riesz}
D^2_0f_\lambda(u,v)=\langle L_\lambda u,v\rangle_H,\quad u,v\in H,\,\lambda\in I.
\end{align}
From the implicit function theorem, it is easily seen that $L_{\lambda^\ast}$ is not invertible if $\lambda^\ast$ is a bifurcation point of critical points of $f$.\\
Let us now assume that $L_\lambda$ is Fredholm for all $\lambda\in I$, so that the spectral flow of the path $L:I\rightarrow\mathcal{FS}(H)$ of bounded selfadjoint Fredholm operators on $H$ is defined (cf. Section \ref{sect:sfl}). The main theorem of \cite{JacBifIch} reads as follows:

\begin{theorem}\label{mainbif}   
If $\sfl(L)\neq 0$ and $L_0,L_1$ are invertible, then there exists a bifurcation point $\lambda^\ast\in (0,1)$ of critical points of $f$ from the trivial branch.
\end{theorem}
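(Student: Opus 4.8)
The plan is to argue by contradiction. Assuming no bifurcation, I would reduce the equation $\nabla f_\lambda(u)=0$ near $0$ to a finite-dimensional one by a parametrised Lyapunov--Schmidt procedure carried out piecewise over a partition of $I$, and then derive a contradiction from the continuation invariance of the critical groups of an isolated critical point together with the normalisation of the spectral flow. Throughout, $\nabla f_\lambda$ denotes the Riesz representative of $Df_\lambda$, so that $\nabla f_\lambda(0)=0$ and $D_0(\nabla f_\lambda)=L_\lambda$ by \eqref{Riesz}.

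\textbf{Uniform isolation.} Suppose no $\lambda^\ast\in(0,1)$ is a bifurcation point. Since $L_0,L_1$ are invertible, $0$ is a nondegenerate (hence isolated) critical point of $f_0$ and $f_1$, and by the implicit function theorem this persists for $\lambda$ near $0$ and near $1$. Combining this with the assumed absence of bifurcation in $(0,1)$ and the compactness of $I$ by a standard covering argument, one obtains a single radius $\rho>0$ such that $0$ is the only critical point of $f_\lambda$ in $\overline{B_\rho(0)}$ for every $\lambda\in I$. This uniform isolation is the only consequence of the non-bifurcation hypothesis that I would use.

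\textbf{Piecewise finite-dimensional reduction.} By Lemma \ref{0isolated}, $0$ is never an accumulation point of $\sigma(L_\lambda)$, and by Corollary \ref{lem:sflneighbourhood} the spectral projections depend continuously on the operator; hence there exist a partition $0=t_0<t_1<\dots<t_N=1$ and numbers $a_i>0$ with $\pm a_i\notin\sigma(L_\lambda)$ for $\lambda\in[t_{i-1},t_i]$ and such that $P_{[-a_i,a_i]}(L_\lambda)$ has constant finite rank there. Writing $H=H_i^-(\lambda)\oplus H_i^0(\lambda)\oplus H_i^+(\lambda)$ for the spectral decomposition of $L_\lambda$ with respect to $(-\infty,-a_i)$, $[-a_i,a_i]$, $(a_i,\infty)$, the operator $L_\lambda$ is uniformly positive (resp. negative) definite on $H_i^+(\lambda)$ (resp. $H_i^-(\lambda)$), so the $H_i^+(\lambda)\oplus H_i^-(\lambda)$-component of $\nabla f_\lambda$ is, near $0$, a uniform local diffeomorphism. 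The implicit function theorem then produces for $\lambda\in[t_{i-1},t_i]$ a $C^1$ solution $v^i_\lambda(w)$ of the complementary gradient equation and a reduced $C^2$ family $g^i_\lambda$ on a $\lambda$-independent ball in the finite-dimensional space $H_i^0:=\im P_{[-a_i,a_i]}(L_{t_{i-1}})$, with $0$ always a critical point, with $D^2_0 g^i_\lambda$ congruent to the compression of $L_\lambda$ to $H_i^0(\lambda)$, and whose critical points near $0$ correspond bijectively to critical points of $f_\lambda$ near $0$. By the uniform isolation of the previous step, $0$ is an isolated critical point of $g^i_\lambda$ for every $\lambda\in[t_{i-1},t_i]$, uniformly.

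\textbf{The contradiction.} For an isolated critical point the critical groups $C_\ast(g^i_\lambda,0)$ (local Morse homology, equivalently the Conley index) are well defined, and their continuation invariance under the continuous family $\lambda\mapsto g^i_\lambda$ gives $C_\ast(g^i_\lambda,0)\cong C_\ast(g^i_{t_{i-1}},0)$ for all $\lambda\in[t_{i-1},t_i]$. At an interior node $t_i$ the two reductions $g^i_{t_i}$ and $g^{i+1}_{t_i}$ of the same function $f_{t_i}$ at $0$ are related, by the Gromoll--Meyer shifting property, by a degree shift equal to the signed number of eigenvalues of $L_{t_i}$ between the two scales $-a_i$ and $-a_{i+1}$, i.e. exactly the change recorded by the spectral projections; since the subspaces $H_i^-(\lambda)$ are in general infinite dimensional, this whole discussion must be phrased in terms of relative Morse indices and relative critical groups. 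Accumulating these shifts over the partition and using the definition \eqref{IndPre-align-specflow} of the spectral flow, the net degree shift between the critical groups at $\lambda=0$ and at $\lambda=1$ equals $\pm\sfl(L)$, while Lemma \ref{IndPre-lemma-sflbasicprop}(i) guarantees this is independent of the partition. On the other hand, $L_0$ and $L_1$ being invertible, $g^1_0$ and $g^N_1$ are nondegenerate at $0$, so $C_\ast(g^1_0,0)$ and $C_\ast(g^N_1,0)$ are each concentrated in a single degree; the chain of isomorphisms forces these two degrees to coincide, contradicting $\sfl(L)\neq 0$. Hence a bifurcation point exists in $(0,1)$. I expect the main obstacle to be precisely this last bookkeeping step: making the reductions on consecutive intervals compatible at the nodes and identifying the accumulated Gromoll--Meyer shift with $\sfl(L)$ in the sign convention of \eqref{IndPre-align-specflow}; the analytic inputs (uniform definiteness on $H_i^\pm(\lambda)$ from Lemma \ref{0isolated} and Corollary \ref{lem:sflneighbourhood}, and continuation invariance of critical groups for a merely continuous family of $C^2$ functions) are otherwise routine.
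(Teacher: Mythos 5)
You should first note that the paper does not prove Theorem \ref{mainbif} at all: it is quoted as the main result of \cite{JacBifIch}, so there is no internal proof to compare with, and your attempt has to be measured against that reference. Judged on its own merits, your outline is sound and is essentially the same circle of ideas used there: uniform isolation of the trivial critical point extracted from the non-bifurcation hypothesis (your compactness argument is correct, and the endpoint case is covered by the parametric implicit function theorem, which only needs continuity in $\lambda$), a Lyapunov--Schmidt reduction over a partition of $I$ subordinate to spectral windows $[-a_i,a_i]$ of constant finite rank (exactly the data entering \eqref{IndPre-align-specflow}), continuation invariance of the local invariants of the reduced functionals, and a shift at the nodes whose telescoping sum against \eqref{IndPre-align-specflow} forces $\sfl(L)=0$, the desired contradiction; the bookkeeping does close up, since $\dim E_{[0,a_i]}(L_\lambda)=\operatorname{rank}P_{[-a_i,a_i]}(L_\lambda)-\dim E_{[-a_i,0)}(L_\lambda)$ with constant rank on each subinterval.

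Two remarks on the points you flag as obstacles. First, your worry that the argument ``must be phrased in terms of relative Morse indices and relative critical groups'' is unnecessary: the infinite-dimensional negative subspaces never enter, because every comparison is between finite-dimensional reduced functionals. At a node $t_i$ the two reductions of the same $f_{t_i}$ are related by reduction in stages: solving the complementary equation for the larger window and then reducing further inside $\im P_{[-a_i,a_i]}(L_{t_i})$ produces, by uniqueness in the implicit function theorem, the same germ at $0$ as the direct reduction for the other window, so the ordinary finite-dimensional Gromoll--Meyer shifting theorem gives the degree shift $\dim E_{[-a_i,0)}(L_{t_i})-\dim E_{[-a_{i+1},0)}(L_{t_i})$. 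Second, since $f$ is only continuous in $\lambda$, the continuation step needs a version of the homotopy invariance of critical groups (equivalently, continuation of the Conley index of the isolated invariant set $\{0\}$ for the reduced negative gradient flows) that requires only $C^1$-closeness in the parameter together with a uniform isolation radius; this is precisely what your uniform isolation step provides, and such invariance results are available, so the strategy goes through.
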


Moreover, if there is an a priori bound on the dimension of the kernels of the operators $L_\lambda$, then the number of bifurcation points can be estimated from below as follows (cf. \cite[Thm. 2.1 ii)]{JacBifIch}).

\begin{theorem}\label{biffinite}
Assume that there exist only finitely many $\lambda\in I$ such that $\ker L_\lambda\neq 0$. Then there are at least

\[\left\lfloor\frac{|\sfl(L)|}{\max_{\lambda\in I}\dim\ker L_\lambda}\right\rfloor\]
distinct bifurcations of critical points from the trivial branch $I\times\{0\}$.
\end{theorem}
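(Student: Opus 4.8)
The plan is to deduce Theorem~\ref{biffinite} from the existence result Theorem~\ref{mainbif} by subdividing $I$ at the (finitely many) instants where $\ker L_\lambda\neq 0$ and invoking the additivity of the spectral flow. Write the set of these instants as $\{\lambda_1,\dots,\lambda_k\}$ with $0<\lambda_1<\dots<\lambda_k<1$; the endpoints are excluded since $L_0,L_1$ are invertible, and if this set is empty then $\sfl(L)=0$ and there is nothing to prove. Put $d:=\max_{\lambda\in I}\dim\ker L_\lambda$, which is finite and equals $\max_{1\le i\le k}\dim\ker L_{\lambda_i}$ because $\ker L_\lambda=0$ off $\{\lambda_1,\dots,\lambda_k\}$. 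First I would choose a partition $0=c_0<c_1<\dots<c_k=1$ with $c_i\in(\lambda_i,\lambda_{i+1})$ for $1\le i\le k-1$, so that each subinterval $[c_{i-1},c_i]$ contains exactly the one crossing instant $\lambda_i$ and $L_{c_i}$ is invertible for every $i$. Iterating Lemma~\ref{IndPre-lemma-sflbasicprop}~i) then gives $\sfl(L)=\sum_{i=1}^k\sfl(L\vert_{[c_{i-1},c_i]})$.

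The heart of the argument will be the estimate $|\sfl(L\vert_{[c_{i-1},c_i]})|\le\dim\ker L_{\lambda_i}\le d$. To prove it I would apply Corollary~\ref{lem:sflneighbourhood} to $L_{\lambda_i}$, obtaining $a>0$ and a connected neighbourhood $N$ of $L_{\lambda_i}$ with $\pm a\notin\sigma(T)$ and $\operatorname{rank}P_{[-a,a]}(T)=\dim\ker L_{\lambda_i}$ for all $T\in N$, and then pick $\varepsilon>0$ so small that $[\lambda_i-\varepsilon,\lambda_i+\varepsilon]\subset[c_{i-1},c_i]$, $L_\lambda\in N$ for $|\lambda-\lambda_i|\le\varepsilon$, and $\ker L_\lambda=0$ for $0<|\lambda-\lambda_i|\le\varepsilon$. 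On the two flanking subintervals $[c_{i-1},\lambda_i-\varepsilon]$ and $[\lambda_i+\varepsilon,c_i]$ the path $L$ is invertible, so its spectral flow vanishes by Lemma~\ref{IndPre-lemma-sflbasicprop}~iii); hence, by the definition \eqref{IndPre-align-specflow} applied with the single interval $[\lambda_i-\varepsilon,\lambda_i+\varepsilon]$ and the constant $a$, we get $\sfl(L\vert_{[c_{i-1},c_i]})=\dim E_{[0,a]}(L_{\lambda_i+\varepsilon})-\dim E_{[0,a]}(L_{\lambda_i-\varepsilon})$, and both dimensions lie between $0$ and $\operatorname{rank}P_{[-a,a]}(L_{\lambda_i\pm\varepsilon})=\dim\ker L_{\lambda_i}$, which gives the claim.

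Next I would set $M:=\{\,i:\sfl(L\vert_{[c_{i-1},c_i]})\neq 0\,\}$. Combining the triangle inequality with the estimate just proved, $|\sfl(L)|\le\sum_{i\in M}|\sfl(L\vert_{[c_{i-1},c_i]})|\le|M|\,d$, so $|M|\ge|\sfl(L)|/d$, and since $|M|$ is a non-negative integer, $|M|\ge\lfloor|\sfl(L)|/d\rfloor$. For each $i\in M$ I would consider the affinely reparametrised functional $g(s,u):=f\bigl(c_{i-1}+s(c_i-c_{i-1}),u\bigr)$, $(s,u)\in I\times H$; it satisfies the hypotheses of Theorem~\ref{mainbif}, its associated path of selfadjoint Fredholm operators is $s\mapsto L_{c_{i-1}+s(c_i-c_{i-1})}$ with invertible endpoints, and its spectral flow equals $\sfl(L\vert_{[c_{i-1},c_i]})\neq 0$. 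Theorem~\ref{mainbif} therefore yields a bifurcation point of critical points of $g$ in $(0,1)$, i.e.\ a bifurcation point $\lambda_i^\ast\in(c_{i-1},c_i)$ of critical points of $f$. The open intervals $(c_{i-1},c_i)$, $i\in M$, are pairwise disjoint, so the $\lambda_i^\ast$ are distinct, and we obtain at least $|M|\ge\lfloor|\sfl(L)|/d\rfloor$ distinct bifurcation points.

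I expect the per-crossing estimate $|\sfl(L\vert_{[c_{i-1},c_i]})|\le\dim\ker L_{\lambda_i}$ to be the only genuine obstacle: the remaining steps are formal consequences of the additivity and vanishing properties of the spectral flow in Lemma~\ref{IndPre-lemma-sflbasicprop} together with a single application of Theorem~\ref{mainbif}, whereas this estimate forces one back to the construction of the spectral flow in Section~\ref{sect:sfl} and to the fact that near an isolated crossing the rank of the spectral projection $P_{[-a,a]}$ is locally constant and equal to the kernel dimension at the crossing.
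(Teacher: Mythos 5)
Your argument is correct, but note that this paper does not actually prove Theorem \ref{biffinite}: it is imported verbatim from \cite[Thm. 2.1 ii)]{JacBifIch}, so there is no internal proof to compare with. What you have written is essentially a reconstruction, from the tools of Section \ref{sect:sfl}, of the argument used in that reference: subdivide $I$ so that each subinterval contains exactly one singular instant $\lambda_i$, use additivity (Lemma \ref{IndPre-lemma-sflbasicprop} i) and iii)) to write $\sfl(L)$ as the sum of the local contributions, bound each local contribution by $\dim\ker L_{\lambda_i}$, and apply Theorem \ref{mainbif} on every subinterval with nonzero contribution. Your key estimate is sound: with $N$ and $a$ as in Corollary \ref{lem:sflneighbourhood} one has $\im P_{[-a,a]}(L_\mu)=E_{[-a,a]}(L_\mu)$ of constant dimension $\dim\ker L_{\lambda_i}$ for $L_\mu\in N$ by \eqref{projdim}, so both terms in the one-interval evaluation of \eqref{IndPre-align-specflow} lie between $0$ and $\dim\ker L_{\lambda_i}$, and the well-definedness of the spectral flow (Appendix C) justifies computing with that single interval and single $a$.

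Two small points you should make explicit. First, the invertibility of $L_0$ and $L_1$, which you use both to exclude the endpoints from the singular set and to apply Theorem \ref{mainbif} on the outermost subintervals, is not stated in the theorem as quoted here; it is a standing hypothesis carried over from Theorem \ref{mainbif} (and it holds in the application, by A2)), so you should record it as such rather than treat it as automatic. Second, in the degenerate case where the singular set is empty the bound $\lfloor|\sfl(L)|/\max_\lambda\dim\ker L_\lambda\rfloor$ is formally $0/0$; your remark that then $\sfl(L)=0$ and nothing needs proving is the right reading, but it deserves the one sentence you gave it. With these caveats the proof is complete and is, in substance, the proof of the cited result.
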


Let us now briefly recall the variational formulation of the equations \eqref{Hamiltoniannonlin} from \cite[\S 4]{Jacobo}. The bilinear forms $b_\lambda(u,v)=\langle \mathcal{A}_\lambda u,v\rangle_{L^2}$, $u,v\in H^1(\mathbb{R},\mathbb{R}^{2n})$, extend to bounded forms on the well known fractional Sobolev space $H^\frac{1}{2}(\mathbb{R},\mathbb{R}^{2n})$, which can be described in terms of Fourier transforms (cf. eg. \cite[\S 10]{Stuart}). Under the assumption \eqref{Hgrowth}, 

\[f_\lambda:H^\frac{1}{2}(\mathbb{R},\mathbb{R}^{2n})\rightarrow\mathbb{R},\quad f_\lambda(u)=b_\lambda(u,u)+\int^\infty_{-\infty}{G(\lambda,t,u(t))\,dt}\]
are $C^2$-functionals such that $f:I\times H^\frac{1}{2}(\mathbb{R},\mathbb{R}^{2n})\rightarrow\mathbb{R}$ is continuous and all its derivatives depend continuously on $\lambda\in I$. A careful examination of $f$ shows that the critical points of $f_\lambda$ belong to $C^1_0(\mathbb{R},\mathbb{R}^{2n})$ and are the classical solutions of the differential equation \eqref{Hamiltoniannonlin}. Moreover, every bifurcation point of critical points of $f$ is a bifurcation point of \eqref{Hamiltoniannonlin} in the sense of Definition \ref{defibif}. Finally,
the second derivative of $f_\lambda$ at the critical point $0\in H^\frac{1}{2}(\mathbb{R},\mathbb{R}^{2n})$ is given by $D^2_0f_\lambda(u,v)=b_\lambda(u,v)$ and the corresponding operators $L_\lambda:H^\frac{1}{2}(\mathbb{R},\mathbb{R}^{2n})\rightarrow H^\frac{1}{2}(\mathbb{R},\mathbb{R}^{2n})$ in \eqref{Riesz} are Fredholm.\\
Let us now explain how Theorem \ref{bif} follows from Theorem \ref{mainbif} and Theorem \ref{biffinite}. We first note that by the definition of the form $b_\lambda$, it clearly follows that $\ker L_\lambda=\ker\mathcal{A}_\lambda$ for all $\lambda\in I$, and moreover, the latter space is contained in $C^1_0(\mathbb{R},\mathbb{R}^{2n})$. Since $L_\lambda$ is Fredholm of index $0$, we see that $L_\lambda$ is invertible if and only if \eqref{Hamiltonianlin} has no non-trivial solution; i.e. $\lambda\notin\Sigma$. This in particular shows that $\Sigma$ contains the set of all bifurcation points of \eqref{Hamiltoniannonlin}. Moreover, it follows from assumption A2) that the operators $L_0$ and $L_1$ are invertible. Our next aim is to prove that $\sfl(L)=\sfl(\mathcal{A})$, which implies the first assertion of Theorem \ref{bif} by our main Theorem \ref{main}. By definition

\begin{align}\label{equalcrossing}
\langle L_\lambda u,v\rangle_{H^\frac{1}{2}(\mathbb{R},\mathbb{R}^{2n})}=\langle\mathcal{A}_\lambda u,v\rangle_{L^2(\mathbb{R},\mathbb{R}^{2n})},\quad u\in H^1(\mathbb{R},\mathbb{R}^{2n}),\, v\in H^\frac{1}{2}(\mathbb{R},\mathbb{R}^{2n}),
\end{align}
which is enough to conclude the equality of $\sfl(L)$ and $\sfl(\mathcal{A})$ by the more general Theorem 2.6 of \cite{Hamiltonian} for the index bundle of families of selfadjoint Fredholm operators (cf. also \cite{MorseIch}). Here, however, we want to use the approach from Section \ref{sect:sfl} for the computation of the spectral flow by crossing forms.\\
We denote by $B:H^\frac{1}{2}(\mathbb{R},\mathbb{R}^{2n})\rightarrow H^\frac{1}{2}(\mathbb{R},\mathbb{R}^{2n})$ the unique selfadjoint operator such that\linebreak $\langle u,v\rangle_{L^2(\mathbb{R},\mathbb{R}^{2n})}=\langle Bu,v\rangle_{H^\frac{1}{2}(\mathbb{R},\mathbb{R}^{2n})}$ for all $u,v\in H^\frac{1}{2}(\mathbb{R},\mathbb{R}^{2n})$. For $\delta'>0$ sufficiently small, the selfadjoint operators $L^\delta_\lambda:=L_\lambda+\delta B$ are Fredholm and have invertible ends for all $0\leq\delta<\delta'$, so that $\sfl(L^\delta)$ is defined and is equal to $\sfl(L)$. By Theorem \ref{thm:Sard}, we can take $0\leq\delta<\delta'$ such that $\mathcal{A}^\delta=\mathcal{A}+\delta I_H$ has only regular crossings and $\sfl(\mathcal{A}^\delta)=\sfl(\mathcal{A})$. We conclude from \eqref{equalcrossing} and the definition of $B$ that 

\[\langle L^\delta_\lambda u,v\rangle_{H^\frac{1}{2}(\mathbb{R},\mathbb{R}^{2n})}=\langle\mathcal{A}^\delta_\lambda u,v\rangle_{L^2(\mathbb{R},\mathbb{R}^{2n})},\quad u\in H^1(\mathbb{R},\mathbb{R}^{2n}),\, v\in H^\frac{1}{2}(\mathbb{R},\mathbb{R}^{2n}),\]
which shows first that $\ker L^\delta_\lambda=\ker\mathcal{A}^\delta_\lambda$, and second that $\Gamma(L^\delta,\lambda)=\Gamma(\mathcal{A}^\delta,\lambda)$, $\lambda\in I$. Since $\mathcal{A}^\delta$ has only regular crossings, we see that the crossings of $L^\delta$ are regular as well. Consequently, we obtain from Theorem \ref{thm:crossing}

\[\sfl(\mathcal{A})=\sfl(\mathcal{A}^\delta)=\sum_{\lambda\in(0,1)}{\sgn\Gamma(\mathcal{A}^\delta,\lambda)}=\sum_{\lambda\in(0,1)}{\sgn\Gamma(L^\delta,\lambda)}=\sfl(L^\delta)=\sfl(L),\]
and the first part of Theorem \ref{bif} is shown.\\
The remaining assertion of Theorem \ref{bif} now follows from Theorem \ref{biffinite} if we can show that\linebreak $\dim\ker L_\lambda\leq n$ for all $\lambda\in I$. Since $\ker L_\lambda=\ker\mathcal{A}_\lambda$ and $\ker\mathcal{A}_\lambda$ is isomorphic to $E^u_\lambda(0)\cap E^s_\lambda(0)$ by \eqref{evaliso}, we obtain the required estimate from the fact that $\dim E^u_\lambda(0)=\dim E^s_\lambda(0)=n$ (cf. Lemma \ref{stabsympl}).


\section*{Appendix}

\appendix

\section{Quadratic forms}
Let $H$ be a finite dimensional Hilbert space with norm $\|\cdot\|$. A quadratic form $Q:H\rightarrow\mathbb{R}$ on $H$ is a map for which there exists a bilinear form $b:H\times H\rightarrow\mathbb{R}$ such that

\[Q(u)=b(u,u),\quad u\in H.\]
The set $\mathcal{Q}(H)$ of all quadratic forms on $H$ is a Banach space with respect to the norm

\[\|Q\|=\sup_{\|u\|=1}|Q(u)|.\] 
For every $Q\in\mathcal{Q}(H)$ there is a unique selfadjoint linear operator $L_Q:H\rightarrow H$ such that 

\begin{align}\label{Rieszfinite}
Q(u)=\langle L_Qu,u\rangle,\quad u\in H.
\end{align}
Note that $Q$ is non-degenerate if and only if $L_Q$ is invertible, and moreover, $\|Q\|=\|L_Q\|$. If $H_1$ is another finite dimensional Hilbert space and $Q_1\in\mathcal{Q}(H_1)$, we write $Q_1\sim Q$ if there exists a an isomorphism $M:H_1\rightarrow H$ such that $L_{Q_1}=M^\ast L_QM$. Clearly, $Q_1$ is non-degenerate if and only if $Q_2$ is. The signature of a quadratic form $Q$ is defined by

\begin{align*}
\sgn Q=m^+(Q)-m^-(Q),
\end{align*}
where
\begin{align*}
m^\pm(Q)=\dim\bigoplus_{\pm\lambda>0}\{u\in H:\,L_Qu=\lambda u\}.
\end{align*}
Note that $\sgn Q_1=\sgn Q$ if $Q\sim Q_1$. It follows from the continuity of eigenvalues (cf. \cite[II.5.1]{Kato}) that the signature is constant on each connected component of the subspace of all non-degenerate quadratic forms. The following stability result is a simple consequence of the Neumann series (cf. \cite[I.4.4]{Kato}).

\begin{lemma}\label{quadraticnondeg}
If $Q_1$ is a non-degenerate quadratic form and $Q_2\in\mathcal{Q}(H)$ is such that $\|L_{Q_2}\|<\|L^{-1}_{Q_1}\|^{-1}$, then $Q_1+Q_2$ is non-degenerate and $\sgn(Q_1+Q_2)=\sgn Q_1$.
\end{lemma}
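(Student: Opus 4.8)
The plan is to reduce everything to the Neumann series, exactly as the remark preceding the statement suggests. The first observation is that the Riesz correspondence $Q\mapsto L_Q$ from \eqref{Rieszfinite} is linear (immediate from the uniqueness of $L_Q$), so $L_{Q_1+Q_2}=L_{Q_1}+L_{Q_2}$. Since both non-degeneracy and the signature of a quadratic form are, by definition, properties of its representing operator, it suffices to show that $L_{Q_1}+L_{Q_2}$ is invertible and to track its signature.

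For non-degeneracy I would factor $L_{Q_1}+L_{Q_2}=L_{Q_1}\bigl(I_H+L_{Q_1}^{-1}L_{Q_2}\bigr)$, which makes sense because $L_{Q_1}$ is invertible ($Q_1$ being non-degenerate). The hypothesis yields $\|L_{Q_1}^{-1}L_{Q_2}\|\le\|L_{Q_1}^{-1}\|\,\|L_{Q_2}\|<1$, so $I_H+L_{Q_1}^{-1}L_{Q_2}$ is invertible by the Neumann series (cf. \cite[I.4.4]{Kato}); hence so is $L_{Q_1}+L_{Q_2}$, i.e. $Q_1+Q_2$ is non-degenerate.

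For the signature I would interpolate: consider the affine path $q_t:=Q_1+tQ_2$ for $t\in[0,1]$, with representing operators $L_{q_t}=L_{Q_1}+tL_{Q_2}$. For every $t\in[0,1]$ one has $\|tL_{Q_2}\|\le\|L_{Q_2}\|<\|L_{Q_1}^{-1}\|^{-1}$, so the argument of the previous paragraph applies verbatim to each $q_t$, and the whole path lies in the open subset of $\mathcal{Q}(H)$ consisting of non-degenerate forms. As recalled in the text (continuity of eigenvalues, cf. \cite[II.5.1]{Kato}), the signature is constant on each connected component of that subset; being constant along the connected path $t\mapsto q_t$, it gives $\sgn(Q_1+Q_2)=\sgn q_1=\sgn q_0=\sgn Q_1$.

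There is no serious obstacle here; the statement is genuinely a corollary of the Neumann series. The only two points that deserve an explicit line are the linearity of $Q\mapsto L_Q$ and the fact — guaranteed by the \emph{strict} inequality in the hypothesis — that the interpolating path $q_t$ remains non-degenerate for \emph{all} $t$, not merely at the endpoints, which is what lets one invoke the local constancy of the signature along the path.
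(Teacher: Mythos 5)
Your proof is correct and is exactly the argument the paper intends: the paper states only that the lemma is ``a simple consequence of the Neumann series'' and, just before, records that the signature is locally constant on the open set of non-degenerate forms, and you have supplied precisely those two ingredients (invertibility of $L_{Q_1}+L_{Q_2}$ via the factorization and Neumann series, then interpolation along $q_t=Q_1+tQ_2$ which stays non-degenerate by the same strict bound). Nothing is missing and no genuinely different route is taken.
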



\section{On liftings in $\Lambda(n)$}
We assume throughout that $X$, $Y$ and $B$ are paracompact Hausdorff spaces. Let us recall the following lifting property for fibre bundles (cf. \cite[Thm. 6.4]{Bredon}):

\begin{lemma}\label{lifting}
Let $p:Y\rightarrow B$ be a fibre bundle and $X$ a contractible space. Then for any continuous map $f:X\rightarrow B$ there exists a continuous map $F:X\rightarrow Y$ such that $p\circ F=f$.
\end{lemma}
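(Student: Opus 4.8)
The statement to prove is Lemma \ref{lifting}: given a fibre bundle $p:Y\to B$ and a contractible space $X$, every continuous $f:X\to B$ lifts through $p$. The plan is to reduce the general case to the lifting of a homotopy and then invoke the homotopy lifting property, which holds for fibre bundles over paracompact spaces.

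First I would recall that a fibre bundle $p:Y\to B$ is a Hurewicz fibration when the base is paracompact (this is the classical theorem of Hurewicz--Dold, and it is precisely the reason the paracompactness hypothesis on $B$ appears in the statement). In particular $p$ has the homotopy lifting property with respect to the space $X$: for any homotopy $G:X\times I\to B$ and any lift $\widetilde{G}_0:X\to Y$ of $G(\cdot,0)$, there is a homotopy $\widetilde{G}:X\times I\to Y$ with $p\circ\widetilde{G}=G$ and $\widetilde{G}(\cdot,0)=\widetilde{G}_0$.

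Next, since $X$ is contractible, fix a homotopy $h:X\times I\to X$ with $h(\cdot,0)=\mathrm{id}_X$ and $h(\cdot,1)\equiv x_0$ for some basepoint $x_0\in X$. Set $G:=f\circ h:X\times I\to B$, so $G(\cdot,0)=f$ and $G(\cdot,1)\equiv f(x_0)$. The constant map $G(\cdot,1)$ lifts trivially: choose any point $y_0\in p^{-1}(f(x_0))$ (the fibre is non-empty) and let $\widetilde{G}_1:X\to Y$ be the constant map at $y_0$. Applying the homotopy lifting property to the reversed homotopy $t\mapsto G(\cdot,1-t)$ with initial lift $\widetilde{G}_1$, we obtain a homotopy $\widetilde{G}:X\times I\to Y$ covering $G$ with $\widetilde{G}(\cdot,1)=\widetilde{G}_1$. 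Then $F:=\widetilde{G}(\cdot,0):X\to Y$ satisfies $p\circ F=G(\cdot,0)=f$, which is the desired lift.

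The only genuine input beyond formal manipulation is the fact that a fibre bundle over a paracompact base is a Hurewicz fibration; everything else is the standard contractibility-plus-homotopy-lifting argument. I do not expect any real obstacle here — the lemma is quoted from \cite[Thm.~6.4]{Bredon} and the proof above is the textbook one. If one wished to avoid citing the Hurewicz--Dold theorem, an alternative is to use a numerable trivializing cover of $B$ together with a partition of unity to build the lift directly over $X\times I$ patch by patch, but this is more laborious and for the purposes of this paper the citation suffices.
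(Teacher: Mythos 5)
Your proof is correct. Note that the paper does not actually prove this lemma: it is recalled from \cite[Thm. 6.4]{Bredon} with no argument given, so there is no internal proof to compare against. Your argument is the standard one behind such a citation: contractibility of $X$ gives a homotopy from $f$ to a constant map, the constant map lifts into a (non-empty) fibre, and the homotopy lifting property carries this lift back to a lift $F$ of $f$; the homotopy lifting property is available because a locally trivial map over a paracompact base is a Hurewicz fibration (Hurewicz--Dold, via numerability of open covers of the paracompact space $B$), which is precisely where the appendix's paracompactness hypothesis enters (paracompactness of $X$ and $Y$ is not needed for this particular statement). An equally common alternative, arguably closer to the cited source, is to pull the bundle back along $f$: the induced bundle $f^{*}Y\rightarrow X$ over the contractible paracompact space $X$ is trivial, hence admits a section, and composing a section with the canonical bundle map $f^{*}Y\rightarrow Y$ gives the lift $F$ with $p\circ F=f$. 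Both routes rest on the same numerability input, and either is adequate for the way the lemma is used in Appendix B.
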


We now consider the fibre bundle $p:\Sp(2n)\rightarrow \Lambda(n)$, where $p$ assigns to $A\in\Sp(2n)$ the Lagrangian subspace $A\ell_0\in\Lambda(n)$ for some fixed $\ell_0\in\Lambda(n)$ (cf. \cite[Rem. 1.4]{Robbin-SalamonMAS}).\\ 
Before we prove the main result of this section, we show the following simple application of Lemma \ref{lifting}.

\begin{lemma}
Let $X$ and $Y$ be contractible spaces. If $\gamma:X\times Y\rightarrow\Lambda(n)$ and $\ell_2:Y\rightarrow\Lambda(n)$ are continuous maps, then there exists a continuous map $\phi:X\times Y\rightarrow\Sp(2n)$ such that 

\[\phi(\lambda,\eta)\ell_2(\eta)=\gamma(\lambda,\eta),\quad(\lambda,\eta)\in X\times Y.\]
\end{lemma}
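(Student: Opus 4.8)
The plan is to reduce the statement to the lifting property of the fibre bundle $p:\Sp(2n)\to\Lambda(n)$, $p(A)=A\ell_0$, recalled just above, applied twice. Fix once and for all a reference Lagrangian $\ell_0\in\Lambda(n)$. The only subtlety compared with a plain lifting problem is that the ``target base point'' $\ell_2(\eta)$ moves with $\eta$; this will be compensated by lifting $\ell_2$ as well and inserting a correcting symplectic matrix.

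First I would lift $\ell_2$. Since $Y$ is contractible, Lemma \ref{lifting} applied to $p$ and to the map $\ell_2:Y\to\Lambda(n)$ yields a continuous map $\sigma:Y\to\Sp(2n)$ with $\sigma(\eta)\ell_0=\ell_2(\eta)$ for all $\eta\in Y$. Next I would lift $\gamma$. The product $X\times Y$ of two contractible spaces is again contractible (and paracompact Hausdorff under the standing assumptions of this section), so Lemma \ref{lifting} applied to $p$ and to $\gamma:X\times Y\to\Lambda(n)$ yields a continuous map $\psi:X\times Y\to\Sp(2n)$ with $\psi(\lambda,\eta)\ell_0=\gamma(\lambda,\eta)$ for all $(\lambda,\eta)\in X\times Y$.

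Finally I would set $\phi(\lambda,\eta):=\psi(\lambda,\eta)\,\sigma(\eta)^{-1}$. Since $\Sp(2n)$ is a Lie group, multiplication and inversion are smooth, so $\phi:X\times Y\to\Sp(2n)$ is continuous, being a composition of the continuous maps $\sigma$, $\psi$, inversion and multiplication. Moreover, using $\ell_2(\eta)=\sigma(\eta)\ell_0$,
\[
\phi(\lambda,\eta)\,\ell_2(\eta)=\psi(\lambda,\eta)\,\sigma(\eta)^{-1}\,\sigma(\eta)\,\ell_0=\psi(\lambda,\eta)\,\ell_0=\gamma(\lambda,\eta),
\]
as required. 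There is no serious obstacle here: the argument is a purely organisational double application of Lemma \ref{lifting}, and the single point requiring care — the $\eta$-dependence of the reference Lagrangian — is dealt with by the correction factor $\sigma(\eta)^{-1}$. This is precisely the tool that will be used to produce the maps $\phi$ (and, via restriction, $\psi$) in Step 1 of the proof of Theorem \ref{main}.
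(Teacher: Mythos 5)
Your proof is correct and follows essentially the same route as the paper: apply Lemma \ref{lifting} twice (once to lift $\ell_2$, once to lift $\gamma$, both with respect to a fixed reference Lagrangian $\ell_0$), then form $\phi(\lambda,\eta)=\psi(\lambda,\eta)\sigma(\eta)^{-1}$. The paper's proof is identical in structure, merely writing $\ell_1$, $\varphi$, $\widetilde\phi$ where you write $\ell_0$, $\sigma$, $\psi$.
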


\begin{proof}
By Lemma \ref{lifting}, for every $\ell_1\in\Lambda(n)$ there exist $\varphi:Y\rightarrow\Sp(2n)$ such that $\varphi(\eta)\ell_1=\ell_2(\eta)$ for all $\eta\in Y$ and $\widetilde{\phi}:X\times Y\rightarrow\Sp(2n)$ such that $\widetilde{\phi}(\lambda,\eta)\ell_1=\gamma(\lambda,\eta)$. Now we set $\phi(\lambda,\eta):=\widetilde{\phi}(\lambda,\eta)\varphi(\eta)^{-1}\in\Sp(2n)$, and obtain

\[\phi(\lambda,\eta)\ell_2(\eta)=\widetilde{\phi}(\lambda,\eta)\ell_1=\gamma(\lambda,\eta),\quad (\lambda,\eta)\in X\times Y.\]
\end{proof}

Our main result of this section reads as follows.

\begin{lemma}\label{famlifting}
Let $X$ and $Y$ be contractible spaces and $\gamma_1,\gamma_2:X\times Y\rightarrow\Lambda(n)$ two continuous maps. Then there exist continuous maps $\phi:I\times X\times Y\rightarrow\Sp(2n)$ and $\ell_1,\ell_2:Y\rightarrow\Lambda(n)$ such that

\[\phi(0,\lambda,\eta)\ell_1(\eta)=\gamma_1(\lambda,\eta),\quad\phi(1,\lambda,\eta)\ell_2(\eta)=\gamma_2(\lambda,\eta),\quad (\lambda,\eta)\in X\times Y.\]
\end{lemma}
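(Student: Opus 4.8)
The plan is to deduce the statement from the single lifting property of Lemma~\ref{lifting}, applied to the fibre bundle $p:\Sp(2n)\to\Lambda(n)$, $p(A)=A\ell_0$, introduced above, and to exploit the fact that $\ell_1$ and $\ell_2$ are only required to exist: we shall simply take them to be the constant maps $\eta\mapsto\ell_0$. With that choice the conclusion $\phi(0,\lambda,\eta)\ell_1(\eta)=\gamma_1(\lambda,\eta)$, $\phi(1,\lambda,\eta)\ell_2(\eta)=\gamma_2(\lambda,\eta)$ amounts to finding a continuous map $\phi:I\times X\times Y\to\Sp(2n)$ lifting, through $p$, some continuous homotopy $\Gamma:I\times X\times Y\to\Lambda(n)$ with $\Gamma(0,\cdot,\cdot)=\gamma_1$ and $\Gamma(1,\cdot,\cdot)=\gamma_2$.

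First I would construct $\Gamma$. Since $X$ and $Y$ are contractible, so is $X\times Y$, and hence each $\gamma_i:X\times Y\to\Lambda(n)$ is homotopic to the constant map at $\gamma_i(p_0)$, where $p_0\in X\times Y$ is any point onto which $X\times Y$ contracts (just compose $\gamma_i$ with the contraction). Because $\Lambda(n)$ is connected, there is a path in $\Lambda(n)$ from $\gamma_1(p_0)$ to $\gamma_2(p_0)$, which gives a homotopy between the two constant maps. Concatenating the three homotopies and reparametrising $I$ yields the desired $\Gamma:I\times X\times Y\to\Lambda(n)$ with $\Gamma(0,\lambda,\eta)=\gamma_1(\lambda,\eta)$ and $\Gamma(1,\lambda,\eta)=\gamma_2(\lambda,\eta)$.

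Next, the space $I\times X\times Y$ is again contractible, so Lemma~\ref{lifting} applied to the fibre bundle $p:\Sp(2n)\to\Lambda(n)$ and the map $\Gamma$ produces a continuous $\phi:I\times X\times Y\to\Sp(2n)$ with $p\circ\phi=\Gamma$, that is, $\phi(s,\lambda,\eta)\ell_0=\Gamma(s,\lambda,\eta)$ for all $(s,\lambda,\eta)$. Taking $\ell_1\equiv\ell_0$ and $\ell_2\equiv\ell_0$ then gives $\phi(0,\lambda,\eta)\ell_1(\eta)=\Gamma(0,\lambda,\eta)=\gamma_1(\lambda,\eta)$ and $\phi(1,\lambda,\eta)\ell_2(\eta)=\Gamma(1,\lambda,\eta)=\gamma_2(\lambda,\eta)$, as required. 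The argument is essentially formal; the only place needing a little care is the construction of $\Gamma$, which uses both the contractibility of $X\times Y$ (to kill the homotopy class of each $\gamma_i$) and the connectedness of $\Lambda(n)$ (to join the resulting constant maps), and I do not anticipate any genuine obstacle beyond this.
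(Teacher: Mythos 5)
Your proof is correct, but it takes a genuinely different route from the paper. Both arguments rest on the same key tool, Lemma \ref{lifting} applied to the bundle $p:\Sp(2n)\rightarrow\Lambda(n)$, $p(A)=A\ell_0$; the difference is where the lifting is performed. You first build a homotopy $\Gamma:I\times X\times Y\rightarrow\Lambda(n)$ from $\gamma_1$ to $\gamma_2$ (using contractibility of $X\times Y$ to deform each $\gamma_i$ to a constant and path-connectedness of $\Lambda(n)$ -- which holds, e.g.\ because $\Lambda(n)$ is the image of the connected group $\Sp(2n)$ under $p$ -- to join the constants), and then lift $\Gamma$ in one stroke over the contractible space $I\times X\times Y$, taking $\ell_1\equiv\ell_2\equiv\ell_0$. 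The paper instead never homotopes $\gamma_1$ to $\gamma_2$: it fixes $\ell_1\equiv\{0\}\times\mathbb{R}^n$, lifts $\gamma_1$ to $\phi_1$, defines $\ell_2(\eta)=\phi_1(p,\eta)^{-1}\gamma_2(p,\eta)$ at the basepoint $p$ of a contraction $f$ of $X$, lifts a normalised version of $\gamma_2$ via the preceding lemma, and splices the two lifts along $f$, so the $I$-variable of $\phi$ is realised explicitly through the contraction. Your argument is shorter and conceptually cleaner, and its conclusion is formally a bit stronger (both $\ell_i$ can be taken constant); the paper's construction is more explicit, which pays off in the remarks and later use: it transfers verbatim to the smooth category, whereas your concatenated homotopy would need the standard reparametrisation/smoothing at the junctions to produce a smooth $\phi$, and the implicit paracompactness of $I\times X\times Y$ needed for Lemma \ref{lifting} is on the same footing as the paper's own application to $X\times Y$ (harmless in the intended application, where $X$ and $Y$ are metric spaces).
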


\begin{proof}
Since $X$ is contractible, there exists $p\in X$ and a continuous map $f:I\times X\rightarrow X$ such that $f(0,\lambda)=p$ and $f(1,\lambda)=\lambda$ for all $\lambda\in X$. We set $\ell_1(\eta)=\{0\}\times\mathbb{R}^n\in\Lambda(n)$ and take a map $\phi_1:X\times Y\rightarrow\Sp(2n)$ such that $\phi_1(\lambda,\eta)\ell_1(\eta)=\gamma_1(\lambda,\eta)$ for all $(\lambda,\eta)\in X\times Y$. We define $\ell_2(\eta):=\phi_1(p,\eta)^{-1}\gamma_2(p,\eta)\in\Lambda(n)$. By the previous lemma, there exists a map $\widetilde{\phi}_2:X\times Y\rightarrow\Sp(2n)$ such that 

\begin{align}\label{formulalifting}
\widetilde{\phi}_2(\lambda,\eta)\ell_2(\eta)=\phi_1(p,\eta)^{-1}\gamma_2(\lambda,\eta),\quad(\lambda,\eta)\in X\times Y.
\end{align}
Note that by definition $\widetilde{\phi}_2(p,\eta)$ maps $\ell_2(\eta)$ to $\ell_2(\eta)$ so that

\begin{align}\label{formulaell}
\widetilde{\phi}_2(p,\eta)^{-1}\ell_2(\eta)=\ell_2(\eta).
\end{align}
Now we set $\phi_2(\lambda,\eta):=\widetilde{\phi}_2(\lambda,\eta)\widetilde{\phi}_2(p,\eta)^{-1}$ and claim that 

\[\phi:I\times X\times Y\rightarrow\Sp(2n),\quad \phi(t,\lambda,\eta)=\phi_1(f(1-t,\lambda),\eta)\phi_2(f(t,\lambda),\eta)\] has the required properties. Indeed, 

\[\phi(0,\lambda,\eta)\ell_1(\eta)=\phi_1(\lambda,\eta)\ell_1(\eta)=\gamma_1(\lambda,\eta)\]
and from \eqref{formulalifting} and \eqref{formulaell} it follows that

\[\phi(1,\lambda,\eta)\ell_2(\eta)=\phi_1(p,\eta)\widetilde{\phi}_2(\lambda,\eta)\widetilde{\phi}_2(p,\eta)^{-1}\ell_2(\eta)=\phi_1(p,\eta)\widetilde{\phi}_2(\lambda,\eta)\ell_2(\eta)=\gamma_2(\lambda,\eta)\]
for all $(\lambda,\eta)\in X\times Y$.
\end{proof}

Finally, let us point out that all results of this section also hold in the smooth category. For example, if $X$ and $Y$ in Lemma \ref{famlifting} are smooth manifolds and $\gamma_1,\gamma_2$ smooth maps, then the maps $\phi$ and $\ell_1,\ell_2$ can be chosen smooth as well. 


\section{Properties of the spectral flow}
The aim of this section is to show the well-definedness and the homotopy invariance of the spectral flow, where we adapt Phillips' arguments from \cite{Phillips} for bounded operators. We begin by showing the well-definedness.

\begin{lemma}
The right hand side in \eqref{IndPre-align-specflow} depends only on the continuous map $\mathcal{A}$.
\end{lemma}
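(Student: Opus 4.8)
The plan is to prove that the number defined in \eqref{IndPre-align-specflow} is independent of the auxiliary choices made in its construction, namely the partition $0=t_0<t_1<\ldots<t_n=1$ and the positive numbers $a_i$ with the property that $[t_{i-1},t_i]\ni t\mapsto P_{[-a_i,a_i]}(\mathcal{A}_t)$ is continuous of constant finite rank. I would proceed in two reduction steps, exactly as in Phillips' argument. First I would show that the value is unchanged under passing to a refinement of a given partition $\{t_i\}$ together with a compatible choice of the $a_i$; second I would show that any two admissible pairs (partition, choice of radii) have a common refinement, so the general case follows from the first step.

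For the refinement step, it suffices by an obvious induction to insert a single new point $s\in(t_{i-1},t_i)$. On the two subintervals $[t_{i-1},s]$ and $[s,t_i]$ one may keep the same radius $a_i$, since continuity and constancy of rank of $t\mapsto P_{[-a_i,a_i]}(\mathcal{A}_t)$ on $[t_{i-1},t_i]$ restrict to the pieces. The contribution of the interval $[t_{i-1},t_i]$ to \eqref{IndPre-align-specflow} is $\dim E_{[0,a_i]}(\mathcal{A}_{t_i})-\dim E_{[0,a_i]}(\mathcal{A}_{t_{i-1}})$, and after inserting $s$ the two new contributions telescope,
\[
\left(\dim E_{[0,a_i]}(\mathcal{A}_{t_i})-\dim E_{[0,a_i]}(\mathcal{A}_{s})\right)+\left(\dim E_{[0,a_i]}(\mathcal{A}_{s})-\dim E_{[0,a_i]}(\mathcal{A}_{t_{i-1}})\right),
\]
which equals the original contribution. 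Hence refining the partition while keeping the radii does not change the sum. It remains to handle a change of the radius on a fixed subinterval.

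The heart of the proof is therefore the following claim: if on a subinterval $[c,d]$ both $t\mapsto P_{[-a,a]}(\mathcal{A}_t)$ and $t\mapsto P_{[-b,b]}(\mathcal{A}_t)$ are continuous of constant finite rank (with $0<a\le b$, say), then $\dim E_{[0,a]}(\mathcal{A}_d)-\dim E_{[0,a]}(\mathcal{A}_c)=\dim E_{[0,b]}(\mathcal{A}_d)-\dim E_{[0,b]}(\mathcal{A}_c)$. To see this I would cover $[c,d]$ by the neighbourhoods from Corollary \ref{lem:sflneighbourhood} and use its part ii): by compactness there is a partition of $[c,d]$ on each piece of which one can apply the corollary with a single large radius, and on which the intermediate projections $P_{[-a,a]}$ and $P_{[-b,b]}$ are simultaneously continuous of constant rank. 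By the telescoping identity of the previous paragraph it suffices to prove the claim on each such piece, i.e. we may assume $\sigma(\mathcal{A}_t)\cap[-b,b]$ consists of eigenvalues of finite multiplicity for all $t\in[c,d]$ and that all of $P_{[-a,a]}(\mathcal{A}_t)$, $P_{[a,b]}(\mathcal{A}_t)$, $P_{[-b,-a]}(\mathcal{A}_t)$, $P_{[0,a]}(\mathcal{A}_t)$, $P_{[0,b]}(\mathcal{A}_t)$ are continuous of constant finite rank on $[c,d]$ (one first subdivides further so that none of the fixed constants $\pm a,\pm b$ lies in $\sigma(\mathcal{A}_t)$ for any $t$). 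Writing $E_{[0,b]}(\mathcal{A}_t)=E_{[0,a]}(\mathcal{A}_t)\oplus E_{(a,b]}(\mathcal{A}_t)$ and noting that $\dim E_{(a,b]}(\mathcal{A}_t)=\operatorname{rank}P_{[a,b]}(\mathcal{A}_t)$ is constant in $t$ on $[c,d]$, the two differences agree. This is precisely the point where I expect the main care to be needed: one must make sure, using Corollary \ref{lem:sflneighbourhood} ii) and \eqref{projdim}, that all the relevant spectral projections onto finitely many eigenvalues are locally continuous of constant rank, so that the "extra'' eigenvalues between $a$ and $b$ contribute nothing to the difference. Once this local statement is in hand, patching over $[c,d]$ by telescoping, and then combining with the common-refinement argument for two arbitrary admissible pairs, completes the proof that \eqref{IndPre-align-specflow} depends only on $\mathcal{A}$.
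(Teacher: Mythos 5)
Your proof is correct and follows essentially the same route as the paper's own argument: refinement invariance of the sum by telescoping, invariance under a change of radius on a fixed subinterval via the constancy of $\dim\im P_{[a_2,a_1]}(\mathcal{A}_t)$ coming from Corollary \ref{lem:sflneighbourhood}, and a common-refinement argument to compare two arbitrary admissible choices of partition and radii. The only cosmetic difference is that you spell out the local subdivision needed to invoke Corollary \ref{lem:sflneighbourhood} ii), which the paper leaves implicit.
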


\begin{proof}
The proof will be divided into three steps.\\
At first, we consider $t_1,\ldots,t_{n-1}$ and $a_1,\ldots,a_n$ as in \eqref{IndPre-align-specflow} and take a further instant $t^\ast\in(0,1)$ such that $t_{i-1}<t^\ast<t_i$ for some $i$. If we now use the two maps

\[[t_{i-1},t^\ast]\ni t\mapsto P_{[-a_i,a_i]}(\mathcal{A}_t),\quad [t^\ast,t_i]\ni t\mapsto P_{[-a_i,a_i]}(\mathcal{A}_t)\]
instead of

\[[t_{i-1},t_i]\ni t\mapsto P_{[-a_i,a_i]}(\mathcal{A}_t)\]
for the computation of \eqref{IndPre-align-specflow}, then the sum does not change because the two new appearing terms cancel each other out.\\
In the second step, we consider the case in which we do not change the partition of the interval but instead the numbers $a_i$. Let $[c,d]\subset[0,1]$ be any subinterval and $t\mapsto P_{[-a_1,a_1]}(\mathcal{A}_t)$, $t\mapsto P_{[-a_2,a_2]}(\mathcal{A}_t)$ two continuous maps as in \eqref{IndPre-align-specflow} which are defined on $[c,d]$. We assume without loss of generality that $a_1\geq a_2$. Since $a_1,a_2\notin\sigma(\mathcal{A}_t)$ for all $t\in[c,d]$, we obtain by \eqref{projdim} 

\begin{align*} 
\dim E_{[0,a_1]}(\mathcal{A}_t)-\dim E_{[0,a_2]}(\mathcal{A}_t)=\dim E_{[a_2,a_1]}(\mathcal{A}_t)=\dim\im P_{[a_2,a_1]}(\mathcal{A}_t)
\end{align*}
which is constant on $[c,d]$ by Corollary \ref{lem:sflneighbourhood}. We conclude that

\begin{align*}
\dim E_{[0,a_1]}(\mathcal{A}_d)-\dim E_{[0,a_1]}(\mathcal{A}_c)&=(\dim E_{[0,a_2]}(\mathcal{A}_d)+\dim E_{[a_2,a_1]}(\mathcal{A}_d))\\
&-(\dim E_{[0,a_2]}(\mathcal{A}_c)+\dim E_{[a_2,a_1]}(\mathcal{A}_c))\\
&=\dim E_{[0,a_2]}(\mathcal{A}_d)-\dim E_{[0,a_2]}(\mathcal{A}_c).
\end{align*}
Finally, let us consider the general case in which we have two partitions $t_0,\ldots,t_n$ and $t'_0,\ldots,t'_m$ having associated numbers $a_1,\ldots,a_n$ and $a'_1,\ldots,a'_m$, respectively, as in \eqref{IndPre-align-specflow}. We build the union of both partitions in order to obtain a third one $\{t''_0,\ldots,t''_{m+n}\}$ which is finer than $t_0,\ldots,t_{n}$ and $t'_1,\ldots,t'_{m}$. By our first step of the proof, we obtain

\begin{align*}
&\sum^{n}_{i=1}{\left(\dim E_{[0,a_i]}(\mathcal{A}_{t_i})-\dim E_{[0,a_i]}(\mathcal{A}_{t_{i-1}})\right)}=\sum^{m+n}_{i=1}{\left(\dim E_{[0,b_i]}(\mathcal{A}_{t''_i})-\dim E_{[0,b_i]}(\mathcal{A}_{t''_{i-1}})\right)}\\
&\sum^m_{i=1}{\left(\dim E_{[0,a'_i]}(\mathcal{A}_{t'_i})-\dim E_{[0,a'_i]}(\mathcal{A}_{t'_{i-1}})\right)}=\sum^{m+n}_{i=1}{\left(\dim E_{[0,b'_i]}(\mathcal{A}_{t''_i})-\dim E_{[0,b'_i]}(\mathcal{A}_{t''_{i-1}})\right)},
\end{align*}
for suitable $b_1,\ldots,b_{m+n}\in \{a_1,\ldots,a_n\}$ and $b'_1,\ldots,b'_{m+n}\in\{a'_1,\ldots a'_{m}\}$. Now the same partition is used in the sums on the right hand sides and we see from the second step of our proof that they actually agree.
\end{proof}

The following assertion is an immediate consequence of the definition of the spectral flow \eqref{IndPre-align-specflow}.

\begin{lemma}\label{IndPre-lemma-N}
Let $N\subset\mathcal{FS}(W,H)$ be a neighbourhood of some $T_0\in\mathcal{FS}(W,H)$ as in Corollary \ref{lem:sflneighbourhood} i). If $\mathcal{A}^1,\mathcal{A}^2:I\rightarrow\mathcal{FS}(W,H)$ are continuous and

\begin{align*}
\mathcal{A}^1(I),\mathcal{A}^2(I)\subset N,\quad \mathcal{A}^1_0=\mathcal{A}^2_0,\quad \mathcal{A}^1_1=\mathcal{A}^2_1,
\end{align*} 
then 

\[\sfl(\mathcal{A}^1)=\sfl(\mathcal{A}^2).\]
\end{lemma}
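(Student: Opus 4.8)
The plan is to exploit the defining properties of a neighbourhood $N$ of the type produced by Corollary \ref{lem:sflneighbourhood} i): such an $N$ comes equipped with a single positive number $a$ which serves simultaneously for every operator in $N$, so that the spectral flow of any path inside $N$ can be computed with the trivial partition and therefore telescopes to an expression depending only on the endpoints.

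Concretely, I would first fix the positive real $a\notin\sigma(T_0)$ attached to $N$, recalling that by construction $\pm a\notin\sigma(T)$ for all $T\in N$, that $N\ni T\mapsto P_{[-a,a]}(T)\in\mathcal{L}(H)$ is continuous, and that $P_{[-a,a]}(T)$ has constant finite rank on $N$. Given an arbitrary continuous path $\mathcal{A}\colon I\to N$, the composition $t\mapsto P_{[-a,a]}(\mathcal{A}_t)$ is then continuous on all of $I$ with constant finite rank, so the trivial partition $0=t_0<t_1=1$ together with $a_1:=a$ is an admissible choice in the construction leading to \eqref{IndPre-align-specflow}. Invoking the well-definedness of the spectral flow established in the preceding lemma, this yields
\[
\sfl(\mathcal{A})=\dim E_{[0,a]}(\mathcal{A}_1)-\dim E_{[0,a]}(\mathcal{A}_0).
\]
Applying this identity to both $\mathcal{A}^1$ and $\mathcal{A}^2$ and using $\mathcal{A}^1_0=\mathcal{A}^2_0$ and $\mathcal{A}^1_1=\mathcal{A}^2_1$, the two right-hand sides coincide, which gives $\sfl(\mathcal{A}^1)=\sfl(\mathcal{A}^2)$.

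The only step needing a moment's attention is checking that one really is entitled to take every $a_i$ equal to the single $a$ attached to $N$, i.e. that the trivial partition is admissible; this is immediate from the properties of $N$ recalled above, and it is precisely what makes the telescoping collapse the sum in \eqref{IndPre-align-specflow} to the endpoint difference. I do not expect any genuine obstacle here. If one wishes to avoid quoting the well-definedness lemma, an equivalent route is to start from any admissible partition $0=t_0<\ldots<t_n=1$ for $\mathcal{A}$, replace each $a_i$ by $a$ using that the ranks of the spectral projections $P_{[a_i,a]}(\mathcal{A}_t)$ (or $P_{[a,a_i]}(\mathcal{A}_t)$) are locally constant on $\Omega_{[-a,a]}\supset N$ exactly as in the second step of the proof of the well-definedness lemma, and then telescope the resulting sum.
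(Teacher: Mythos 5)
Your argument is correct and is exactly the justification the paper leaves implicit: since $N$ carries a single $a>0$ for which $T\mapsto P_{[-a,a]}(T)$ is continuous of constant finite rank on all of $N$, the trivial partition $0=t_0<t_1=1$ with $a_1=a$ is admissible for any path in $N$, and by the well-definedness lemma the formula \eqref{IndPre-align-specflow} collapses to the endpoint difference $\dim E_{[0,a]}(\mathcal{A}_1)-\dim E_{[0,a]}(\mathcal{A}_0)$, which only depends on $\mathcal{A}_0$ and $\mathcal{A}_1$. The paper simply asserts this as "an immediate consequence of the definition," so you have supplied precisely the expected one-line computation.
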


Now we are ready for proving the homotopy invariance of the spectral flow.

\begin{lemma}
Let $h:I\times I\rightarrow\mathcal{FS}(W,H)$ be a continuous map such that $h(I\times\partial I)\subset GL(W,H)$. Then

\[\sfl(h(0,\cdot))=\sfl(h(1,\cdot)).\]
\end{lemma}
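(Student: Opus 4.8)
The plan is to mimic Phillips' compactness argument for the bounded case (as announced in the excerpt), exploiting Corollary \ref{lem:sflneighbourhood} and Lemma \ref{IndPre-lemma-N}. First I would observe that $h$ is a continuous map on the compact square $I\times I$, hence uniformly continuous. For each point $(s,t)\in I\times I$ apply Corollary \ref{lem:sflneighbourhood} i) to $h(s,t)\in\mathcal{FS}(W,H)$ to obtain $a=a(s,t)>0$ and an open connected neighbourhood $N_{(s,t)}\subset\mathcal{FS}(W,H)$ of $h(s,t)$ on which $T\mapsto P_{[-a,a]}(T)$ is continuous of constant finite rank. The preimages $h^{-1}(N_{(s,t)})$ form an open cover of $I\times I$; take a Lebesgue number $\rho>0$ for this cover. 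Then partition $I\times I$ into a grid of squares $[s_{i-1},s_i]\times[t_{j-1},t_j]$ each of diameter less than $\rho$, so that $h$ maps each closed grid square into a single $N_{(s,t)}$ as above.

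Next I would set $g_s:=h(s,\cdot):I\rightarrow\mathcal{FS}(W,H)$ and show $\sfl(g_{s_{i-1}})=\sfl(g_{s_i})$ for each $i$, which by finitely many steps yields $\sfl(h(0,\cdot))=\sfl(h(1,\cdot))$. Fix such an $i$ and work column by column in $j$. On the square $[s_{i-1},s_i]\times[t_{j-1},t_j]$, both restricted paths $t\mapsto h(s_{i-1},t)$ and $t\mapsto h(s_i,t)$, for $t\in[t_{j-1},t_j]$, have image in the same neighbourhood $N$. Moreover the two "vertical" segments $s\mapsto h(s,t_{j-1})$ and $s\mapsto h(s,t_j)$ (for $s\in[s_{i-1},s_i]$) also lie in $N$ and connect the endpoints. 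Concatenating, the loop-type path obtained from $h(s_{i-1},\cdot)|_{[t_{j-1},t_j]}$ followed by $h(\cdot,t_j)|_{[s_{i-1},s_i]}$ and the path $h(\cdot,t_{j-1})|_{[s_{i-1},s_i]}$ followed by $h(s_i,\cdot)|_{[t_{j-1},t_j]}$ are two continuous paths in $N$ with the same endpoints, so by Lemma \ref{IndPre-lemma-N} they have equal spectral flow; using additivity under concatenation (Lemma \ref{IndPre-lemma-sflbasicprop} i)) and the reversal property ii), this gives
\[
\sfl\big(h(s_{i-1},\cdot)|_{[t_{j-1},t_j]}\big)+\sfl\big(h(\cdot,t_j)|_{[s_{i-1},s_i]}\big)=\sfl\big(h(\cdot,t_{j-1})|_{[s_{i-1},s_i]}\big)+\sfl\big(h(s_i,\cdot)|_{[t_{j-1},t_j]}\big).
\]
Summing this identity over $j=1,\ldots,m$, the "vertical" contributions telescope: all interior terms $\sfl(h(\cdot,t_j)|_{[s_{i-1},s_i]})$ for $0<j<m$ appear once on each side and cancel, leaving only $j=0$ and $j=m$. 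But $h(\cdot,t_0)=h(\cdot,0)$ and $h(\cdot,t_m)=h(\cdot,1)$ take values in $GL(W,H)$ by hypothesis, so these two remaining spectral flows vanish by Lemma \ref{IndPre-lemma-sflbasicprop} iii). Hence $\sfl(h(s_{i-1},\cdot))=\sfl(h(s_i,\cdot))$ by additivity along the partition $t_0<\cdots<t_m$, and iterating over $i$ completes the proof.

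The main obstacle I anticipate is purely bookkeeping rather than conceptual: one must be careful that the grid can be chosen simultaneously fine enough in both directions (the Lebesgue number handles this) and that the small segments used in each square genuinely stay inside the \emph{same} neighbourhood $N$ with the rank-constancy property — this is exactly what Lemma \ref{IndPre-lemma-N} requires. A minor technical point worth spelling out is that Lemma \ref{IndPre-lemma-N} is stated for paths with common endpoints lying in a single such $N$; to apply it to the two concatenated paths above one uses that a concatenation of finitely many subpaths, each inside a common $N$, is homotopic rel endpoints within $\mathcal{FS}(W,H)$ to either ordering, and the spectral flow only depends on endpoints within $N$ — so no independent homotopy-invariance statement is needed, only Lemma \ref{IndPre-lemma-N} applied square by square together with the additivity and reversal properties already established.
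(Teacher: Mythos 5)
Your proposal is correct and follows essentially the same route as the paper: cover $h(I\times I)$ by neighbourhoods from Corollary~\ref{lem:sflneighbourhood}, take a Lebesgue number, derive the square identity from Lemma~\ref{IndPre-lemma-N} plus additivity, telescope in $j$, and kill the two boundary terms with Lemma~\ref{IndPre-lemma-sflbasicprop}~iii). The caveat in your final paragraph about needing a ``homotopic rel endpoints within $\mathcal{FS}(W,H)$'' reordering argument is unnecessary and slightly circular as phrased: since the whole grid square $[s_{i-1},s_i]\times[t_{j-1},t_j]$ maps into a single $N$, both L-shaped concatenations already have image entirely in $N$ and share endpoints, so Lemma~\ref{IndPre-lemma-N} applies to them directly --- exactly as in the paper.
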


\begin{proof}
Since $h([0,1]\times[0,1])\subset\mathcal{FS}(W,H)$ is compact, we can find a finite open covering

\begin{align*}
h([0,1]\times[0,1])\subset\bigcup^n_{i=1}{N_i},
\end{align*}
where the $N_i\subset\mathcal{FS}(W,H)$ are open sets as in Corollary \ref{lem:sflneighbourhood} i). Accordingly, for each $N_i$ there exists $a_i>0$ such that $-a_i,a_i\notin\sigma(T)$  for all $T\in N_i$, the map 

\begin{align*}
N_i\ni T\mapsto P_{[-a_i,a_i]}(T)\in\mathcal{L}(H)
\end{align*} 
is continuous and all $P_{[-a_i,a_i]}(T)$ are projections of the same finite rank. Let $\varepsilon_0>0$ be a Lebesgue number of the open covering

\begin{align*}
[0,1]\times[0,1]=\bigcup^n_{i=1}{h^{-1}(N_i)},
\end{align*} 
and note that now the image of each subset of $[0,1]\times[0,1]$ of diameter less than $\varepsilon_0$ is entirely contained in one of the $h^{-1}(N_i)$. \\
Let us take instants $0=t_0<t_1<\ldots<t_m=1$ such that $|t_i-t_{i-1}|<\frac{\varepsilon_0}{\sqrt{2}}$, $1\leq i\leq m$. Then for each pair $1\leq i,j\leq m$, $h([t_{i-1},t_i]\times[t_{j-1},t_j])$ is contained entirely in one of the $N_k$. From Lemma \ref{IndPre-lemma-N} and Lemma \ref{IndPre-lemma-sflbasicprop} i)-ii), we obtain for any $h\mid_{[t_{i-1},t_i]\times[t_{j-1},t_j]}$ 

\begin{align*}
\sfl(h(t_{i-1},\cdot)\mid_{[t_{j-1},t_j]})&=\sfl(h(\cdot,t_{j-1})\mid_{[t_{i-1},t_i]})+\sfl(h(t_i,\cdot)\mid_{[t_{j-1},t_j]})\\
&-\sfl(h(\cdot,t_j)\mid_{[t_{i-1},t_i]}).
\end{align*}
Moreover, it follows from Lemma \ref{IndPre-lemma-sflbasicprop} iii) that

\begin{align*}
\sfl(h(\cdot,0)\mid_{[t_{i-1},t_i]})=\sfl(h(\cdot,1)\mid_{[t_{i-1},t_i]})=0,\quad i=1,\ldots,m.
\end{align*}
By using Lemma \ref{IndPre-lemma-sflbasicprop} i) once again, we have

\begin{align*}
\sfl(h(t_{i-1},\cdot))&=\sum^m_{j=1}{\sfl(h(t_{i-1},\cdot)\mid_{[t_{j-1},t_j]})}\\
&=\sum^m_{j=1}{\left(\sfl(h(\cdot,t_{j-1})\mid_{[t_{i-1},t_i]})+\sfl(h(t_i,\cdot)\mid_{[t_{j-1},t_j]})-\sfl(h(\cdot,t_j)\mid_{[t_{i-1},t_i]})\right)}\\
&=\sum^m_{j=1}{\sfl(h(t_{i},\cdot)\mid_{[t_{j-1},t_j]})}=\sfl(h(t_{i},\cdot)),
\end{align*}
and consequently,

\begin{align*}
\sfl(h(0,\cdot))=\sfl(h(t_0,\cdot))=\sfl(h(t_m,\cdot))=\sfl(h(1,\cdot)).
\end{align*}
\end{proof}

\thebibliography{9999999}

\bibitem[Ab01]{AlbertoBuch} A. Abbondandolo, \textbf{Morse theory for Hamiltonian systems}, Chapman \& Hall/CRC Research Notes in Mathematics, 425. Chapman \& Hall/CRC, Boca Raton, FL,  2001

\bibitem[AM03]{Alberto} A. Abbondandolo, P. Majer, \textbf{Ordinary differential operators in Hilbert spaces and Fredholm pairs}, Math. Z. \textbf{243}, 2003, 525--562

\bibitem[Am90]{Amann} H. Amann, \textbf{Ordinary differential equations - An introduction to nonlinear analysis}, de Gruyter Studies in Mathematics \textbf{13}, Walter de Gruyter \& Co., Berlin,  1990

\bibitem[Ar67]{Arnold} V.I. Arnold, \textbf{A Characteristic Class Entering in Quantization Conditions}, Func. Ana. Appl. \textbf{1}, 1967, 1--14

\bibitem[APS76]{AtiyahPatodi} M.F. Atiyah, V.K. Patodi, I.M. Singer, \textbf{Spectral Asymmetry and Riemannian Geometry III}, Proc. Cambridge Philos. Soc. \textbf{79}, 1976, 71--99

\bibitem[BD02]{Bartsch} T. Bartsch, Y. Ding, \textbf{Homoclinic Solutions of an infinite-dimensional Hamiltonian System}, Math. Z. \textbf{240}, 2002, 289--310

\bibitem[BW85]{BoossDesuspension} B. Boo{\ss}-Bavnbek, K. Wojciechowski, \textbf{Desuspension of splitting elliptic symbols. I}, Ann. Global Anal. Geom. \textbf{3}, 1985, 337--383

\bibitem[BZ05]{BBBZhu} B. Boo{\ss}-Bavnbek, C. Zhu, \textbf{General spectral flow formula for fixed maximal domain}, Cent. Eur. J. Math. \textbf{3}, 2005, 558--577

\bibitem[BLP05]{UnbSpecFlow} B. Boo{\ss}-Bavnbek, M. Lesch, J. Phillips, \textbf{Unbounded Fredholm Operators and Spectral Flow}, Canad. J. Math. \textbf{57}, 2005, 225--250

\bibitem[Br97]{Bredon} G. Bredon, \textbf{Topology and Geometry}, Corrected third printing of the 1993 original, Graduate Texts in Mathematics \textbf{139}, Springer-Verlag, New York,  1997

\bibitem[CLM94]{Cappel} S.E. Cappel, R. Lee, E. Miller, \textbf{On the Maslov Index}, Comm. Pure Appl. Math. \textbf{47}, 1994, 121--186

\bibitem[CH07]{Hu} C.-N. Chen, X. Hu, \textbf{Maslov index for homoclinic orbits of {H}amiltonian systems}, Ann. Inst. H. Poincar\'e Anal. Non Lin\'eaire \textbf{24}, 2007, 589--603

\bibitem[FPR99]{Specflow} P.M. Fitzpatrick, J. Pejsachowicz, L. Recht, \textbf{Spectral Flow and Bifurcation of Critical Points of Strongly-Indefinite Functionals-Part I: General Theory}, Journal of Functional Analysis \textbf{162}, 1999, 52--95

\bibitem[FPR00]{SFLPejsachowiczII} P.M. Fitzpatrick, J. Pejsachowicz, L. Recht, \textbf{Spectral Flow and Bifurcation of Critical Points of Strongly-Indefinite Functionals Part II: Bifurcation of Periodic Orbits of Hamiltonian Systems},
 J. Differential Equations \textbf{163}, 2000, 18--40

\bibitem[Fl88]{Floer} A. Floer, \textbf{An Instanton Invariant for 3-Manifolds}, Com. Math. Phys. \textbf{118}, 1988, 215-240

\bibitem[GGK90]{GohbergClasses} I. Gohberg, S. Goldberg, M.A. Kaashoek, \textbf{Classes of Linear Operators Vol. I}, Operator Theory: Advances and Applications Vol. 49, Birkh\"auser, 1990

\bibitem[Ka76]{Kato} T. Kato, \textbf{Perturbation Theory of Linear Operators}, Grundlehren der mathematischen Wissenschaften \textbf{132}, 2nd edition, Springer, 1976

\bibitem[KL04]{KirkLesch} P. Kirk, M. Lesch, \textbf{The Eta-Invariant, Maslov Index, and Spectral Flow for Dirac Type Operators on Manifolds with Boundary}, Forum Math. \textbf{16}, 2004, 553--629

\bibitem[Le05]{LeschSpecFlowUniqu} M. Lesch, \textbf{The Uniqueness of the Spectral Flow on Spaces of Unbounded Self-adjoint Fredholm Operators}, Cont. Math. Amer. Math. Soc. \textbf{366}, 2005, 193--224

\bibitem[Ni95]{NicolaescuDuke} L. Nicolaescu, \textbf{The Maslov index, the spectral flow, and decompositions of manifolds}, Duke Univ. J. \textbf{80}, 1995, 485--534

\bibitem[Ni97]{NicolaescuMem} L. Nicolaescu, \textbf{Generalized Symplectic Geometries and the Index of Families of Elliptic Problems}, Memoirs AMS \textbf{128}, 1997

\bibitem[Pe08a]{Jacobohomoclinics} J. Pejsachowicz, \textbf{Bifurcation of homoclinics}, Proc. Amer. Math. Soc.  \textbf{136}, 2008, 111--118

\bibitem[Pe08b]{Jacobo} J. Pejsachowicz, \textbf{Bifurcation of Homoclinics of Hamiltonian Systems}, Proc. Amer. Math. Soc. \textbf{136}, 2008, 2055--2065

\bibitem[PW13]{JacBifIch} J. Pejsachowicz, N. Waterstraat, \textbf{Bifurcation of critical points for continuous families of $C^2$ functionals of Fredholm type}, J. Fixed Point Theory Appl. \textbf{13},  2013, 537--560

\bibitem[Ph96]{Phillips} J. Phillips, \textbf{Self-adjoint Fredholm Operators and Spectral Flow}, Canad. Math. Bull. \textbf{39}, 1996, 460--467

\bibitem[RS93]{Robbin-SalamonMAS} J. Robbin, D. Salamon, \textbf{The Maslov index for paths}, Topology \textbf{32}, 1993, 827--844

\bibitem[RS95]{Robbin-Salamon} J. Robbin, D. Salamon, \textbf{The spectral flow and the {M}aslov index}, Bull. London Math. Soc. {\bf 27}, 1995, 1--33

\bibitem[SZ92]{Salamon-Zehnder} D. Salamon, E. Zehnder, \textbf{Morse Theory for Periodic Solutions of Hamiltonian Systems and the Maslov Index}, Comm. Pure Appl. Math. \textbf{45}, 1992, 1303--1360

\bibitem[St95]{Stuart} C.A. Stuart, \textbf{Bifurcation into spectral gaps}, Bull. Belg. Math. Soc. Simon Stevin  1995,  suppl., 59 pp.

\bibitem[Wah08]{Wahl} C. Wahl, \textbf{A new topology on the space of unbounded selfadjoint operators, K-theory and spectral flow}, $C^\ast$-algebras and elliptic theory II, 297--309, Trends Math., Birkh\"auser, Basel,  2008

\bibitem[Wa12]{MorseIch}  N. Waterstraat, \textbf{A K-theoretic proof of the Morse index theorem in semi-Riemannian Geometry}, Proc. Amer. Math. Soc. \textbf{140}, 2012, 337--349

\bibitem[Wa15a]{Hamiltonian} N. Waterstraat, \textbf{A family index theorem for periodic Hamiltonian systems and bifurcation}, Calc. Var. Partial Differential Equations \textbf{52}, 2015, 727--753, arXiv:1305.5679 [math.DG]

\bibitem[Wa15b]{domainshrinking} N. Waterstraat, \textbf{On bifurcation for semilinear elliptic Dirichlet problems on shrinking domains}, Springer Proc. Math. Stat. \textbf{119}, arXiv:1403.4151 [math.AP]

\vspace{1cm}
Nils Waterstraat\\
Institut für Mathematik\\
Humboldt Universität zu Berlin\\
Unter den Linden 6\\
10099 Berlin\\
Germany\\
E-mail: waterstn@math.hu-berlin.de

\end{document}